%
%
%
%
\documentclass{amsart}

\usepackage{amstext,amssymb,amsmath,amsbsy,mathrsfs,mathtools,amsfonts,amscd,exscale}
\usepackage{graphics, epstopdf}
\usepackage{color}
\usepackage{amsmath, amssymb}
\usepackage{enumerate}
\usepackage{bigints}
\usepackage{verbatim}
\usepackage[poorman]{cleveref}

\usepackage{graphicx}
\usepackage{epstopdf}
\usepackage{float}
\usepackage{placeins}
\usepackage{float}
\usepackage[caption = false]{subfig}

\usepackage[normalem]{ulem}

\newcommand{\mbV}{\mathbb{V}}
\newcommand{\bE}{\mathbb{E}}

\newcommand{\eps}{\varepsilon}

\renewcommand{\sc}{\textsc}

\usepackage{cancel}

\newtheorem{Corollary}{Corollary}[section]
\newtheorem{Theorem}{Theorem}[section]
\newtheorem{Lemma}[Theorem]{Lemma}
\newtheorem{Proposition}[Theorem]{Proposition}

\theoremstyle{definition}
\newtheorem{Definition}[Theorem]{Definition}
\newtheorem{example}[Theorem]{Example}
\newtheorem{remark}[Theorem]{Remark}

\theoremstyle{remark}

\numberwithin{equation}{section}



\def\l({\left(}
\def\r){\right)}

\def\nd{\partial_{\mu}}

\def\LtG{L^2(\Gamma_h)}

\def\dt{ \tau }
\def\hb{{h}}

\def\Vhk{\mathbb{V}_h^k }

\def\Hhk{\mathbb{H}_h^k }

\def\lom{{L^2(\Omega)}}
\def\hT{{H^1(T)}}
\def\hom{{H^1(\Omega)}}

\def\htwom{{H^2(\Omega)}}

\def\Ye{y^{\sigma}}

\def\by{\boldsymbol{\delta Y}_h}
\def\bz{\boldsymbol{0}}
\def\bla{\boldsymbol{\Lambda}_h}
\def\bF{\boldsymbol{F}_n}

\def\npow{^{n+1}}
\def\mQ{\mathbb{Q}}

\def\mP{\mathbb{P}}

\def\calE{\mathcal{E}}
\def\Ei{\mathcal{E}_h^0}
\def\Eb{\mathcal{E}_h^b}

\def\isomh{(\nabla_h y_h)^T \nabla_h y_h}

\def\sumT{\sum_{T \in \mathcal{T}_h}}

\def\Ahk{\mathbb{A}_{h,\eps}^k} 

\renewcommand{\div}{\textrm{div}\,}

\def\hT{\widehat{T}}

\def\hx{\widehat{x}}
\def\hv{\widehat{v}}
\def\hw{\widehat{w}}
\def\hp{\widehat{p}}
\def\hI{\widehat{I}}

\definecolor{red}{rgb}{1,0,0}
\definecolor{blue}{rgb}{0,0,1}
\definecolor{aggiemaroon}{rgb}{0.8,0,0}
\definecolor{purple}{rgb}{1,0,1}

\newcommand{\rhn}[1]{{\color{black}#1}}

\begin{document}

	\title[Large Isometric Bending Plate Deformations]{DG Approach to Large Bending Plate Deformations with Isometry Constraint}

	\author[A. Bonito]{Andrea Bonito$^1$}
	\address{Department of Mathematics, Texas A\&M University, College Station, TX 77843}
	\email{bonito@math.tamu.edu}
	\thanks{$^1$ Partially supported by the NSF Grant DMS-1817691.}
	
	\author[R.H. Nochetto]{Ricardo H. Nochetto$^2$}
	\address{Department of Mathematics and Institute for Physical Science
		and Technology, University of Maryland, College Park, Maryland 20742}
	\email{rhn@math.umd.edu}
	\thanks{$^2$ Partially supported by the NSF Grants DMS-1411808 and DMS-1908267,
          the Institut Henri Poincar\'e (Paris) and the Hausdorff Institute (Bonn).}

	\author[D. Ntogkas]{Dimitrios Ntogkas$^3$}
	\address{Department of Mathematics, University of Maryland, College Park, Maryland 20742}
	\email{dimnt@math.umd.edu}
	\thanks{$^3$ Partially supported by the  NSF Grant DMS-1411808 and
		the 2016-2017 Patrick and Marguerite Sung Fellowship of the
		University of Maryland.}

\begin{abstract}
   We propose a new discontinuous Galerkin (dG) method for a geometrically
    nonlinear Kirchhoff plate model for large isometric bending deformations. The
    minimization problem is nonconvex due to the isometry constraint. We present a
    practical discrete gradient flow that decreases the energy and computes discrete
    minimizers that satisfy a prescribed discrete isometry defect. We prove
    $\Gamma$-convergence of the discrete energies and discrete global minimizers.
    We document the flexibility and accuracy of the dG method with several numerical
    experiments.
\end{abstract}

\keywords{Nonlinear elasticity; plate bending; isometry constraint; discontinuous Galerkin; iterative solution; $\Gamma$-convergence.}

\maketitle
  
	\section{Introduction} \label{S:Introduction}
	
	Large bending deformations of thin plates is a critical \rhn{feature} for many modern engineering 
	applications due to the extensive use of plate actuators in a variety of systems like thermostats, nano-tubes, micro-robots and micro-capsules \cite{BasAbLaGr,JaSmIn,KuLPL,SchmidtEb,SmInLu}. From the mathematical viewpoint, there is an increasing interest in the modeling and the numerical treatment of such plates.
	A rigorous analysis of large bending deformations of plates was conducted by Friesecke, James and M\"uller \cite{FJM}, who derived the
	geometrically non-linear Kirchhoff model from three dimensional hyperelasticity \rhn{via $\Gamma$-convergence.} Since then, there have been various other interesting models, such as the \rhn{models of prestrained plates} derived in \cite{Lewicka,Lewicka2}.
	Previous work on the numerical treatment of large bending deformations includes the single layer 
	problem by Bartels \cite{Bartels}, the bilayer problem by Bartels, Bonito and Nochetto \cite{BaBoNo} and the modeling and simulation of thermally actuated bilayer plates by Bartels, Bonito, Muliana and Nochetto \cite{BaBoMuNo}. In all three approaches \rhn{\cite{Bartels,BaBoMuNo,BaBoNo}} the model involves minimizing an energy functional that is dominated by the Hessian of the deformation $y: \Omega \to \mathbb{R}^3$ of the mid-plane \rhn{$\Omega\subset\mathbb{R}^2$ of the undeformed plate. Given functions $(g,\Phi)$, 
	the minimization takes place under Dirichlet boundary conditions for $y=g$ and $\nabla y=\Phi$ on part of the boundary $\partial_D\Omega$ of $\partial\Omega$} and the isometry constraint
        \begin{equation}\label{E:isometry}
	\nabla y^T \nabla y= I \quad \text{a.e. in }\Omega,
	\end{equation}
	where $I$ stands for the identity matrix in $\mathbb{R}^2$.  The authors \rhn{of \cite{Bartels,BaBoMuNo,BaBoNo}} employ Kirchhoff elements in order to impose the isometry constraint at the nodes of the triangulation and rely on an $H^2$- gradient flow that allows them to construct solutions of decreasing discrete energy. In \cite{Bartels,BaBoNo} it is also proved that the  discrete energy $\Gamma$-converges to the continuous one. Since for \rhn{fourth} order problems a conforming approach would be very costly, the Kirchhoff elements offer a natural non-conforming space for the model that allows the imposition of \eqref{E:isometry} nodewise. 
	
	\subsection{Our contribution}
	In this paper we focus on the single layer problem, \rhn{as in \cite{Bartels},} in order to investigate the applicability of a more flexible approach that hinges on a non-conforming space of discontinuous functions. \rhn{We use} interior penalty terms for the discrete energy along with a Nitsche's approach to enforce the boundary conditions in the limit. We start with the Dirichlet and forcing data
\begin{equation}\label{E:data}
  g\in [H^1(\Omega)]^3,\quad
  \Phi\in [H^1(\Omega)]^{3\times2}, \quad
  f \in [L^2(\Omega)]^3,
\end{equation}  
and the affine manifold of $[H^2(\Omega)]^3$
\begin{equation}\label{E:affine-manifold}
\mbV(g,\Phi) := \left\{ v \in  [H^{2}(\Omega)]^3: \, v=g, \ \nabla v = \Phi \  \text{on} \ \partial_D \Omega   \right\},
\end{equation}
where $\partial_D \Omega$ is an open \rhn{non-empty subset} of the boundary $\partial \Omega$.
We wish to approximate a minimizer $y: \Omega \to \mathbb{R}^3$ of the continuous energy
	\begin{equation} \label{E:CEnergy}
	E[y] = \frac{1}{2} \int_{\Omega}|D^2y|^2 - \int_{\Omega} f \cdot y
	\end{equation}
	in the nonconvex set of admissible functions
	\begin{equation} \label{E:ContSpace}
	\rhn{\mathbb{A}(g,\Phi) :=} \left\{ y \in \mbV(g,\Phi) : \ (\nabla y)^T \nabla y = I \ \text{a.e. in } \Omega \right\}, 
	\end{equation}
	where $|\cdot|$ denotes the Frobenius norm.
	To avoid the costly use of a conforming finite element subspace of $[H^2(\Omega)]^3$, we \rhn{resort to a space $[\mathbb{V}_h^k]^3$} of discontinuous piecewise polynomials of degree $k\geq 2$ \rhn{over a {\it shape-regular} but possibly {\it graded} mesh $\mathcal{T}_h$ (constructed either using the reference unit triangle or unit square). Since our estimates below are all local, hereafter $h$ stands for a mesh density function locally equivalent to the element size. However, to de-emphasize this aspect of our approach in favor of others (non-convexity, Hessian reconstruction, $\Gamma$-convergence) and to simplify notation, $h$ written as a parameter signifies the meshsize of $\mathcal{T}_h$ (i.e. the largest element size). We denote by $E_h:[\mathbb{V}_h^k]^3\to\mathbb{R}$ the discrete energy that approximates $E$ and accounts for the discontinuities of the functions $v_h\in[\mathbb{V}_h^k]^3$ and of their broken (i.e. piecewise) gradients $\nabla_h v_h$; see \eqref{E:DEn}.}

 It is important to notice that the energy $E$ in \eqref{E:CEnergy}
  is convex but the isometry constraint \eqref{E:isometry} is not.
  Therefore, we must approximate \eqref{E:isometry}, and thus the admissible set
  \rhn{$\mathbb{A}(g,\Phi)$} in a way
  amenable to computation, as well as construct an algorithm able to find \rhn{critical points.}
  We define the discrete admissible set \rhn{$\Ahk(g,\Phi)$} to be the set of functions
  \rhn{$v_h\in[\mathbb{V}_h^k]^3$ whose boundary jumps include $g$ and $\Phi$ (see \eqref{jumps} and \eqref{E:bd-jumps} below) and whose} {\it discrete isometry defect} $D_h[v_h]$ satisfies
	\begin{equation} \label{E:DIC} 
	D_h[v_h]:= \sum_{T \in \mathcal{T}_h } \left| \int_T (\nabla_h v_h)^T \nabla_h v_h  - I   \right|  \leq \eps, 
	\end{equation}
	where $\eps =\eps(\hb) \to 0$ as $\hb\to0$.
We then search for \rhn{$y_h \in \Ahk(g,\Phi)$} that minimizes the discrete energy
$E_h[y_h]$. To this end, we propose a discrete $H^2$-gradient flow \rhn{with fictitious time step $\tau$ that can be made arbitrarily small.} We show that it gives rise to \rhn{iterates $\left\{ y_h^n\right\}_{n=1}^\infty \subset \Ahk(g,\Phi)$} with decreasing discrete energy $E_h[y_h^{n+1}]<E_h[y_h^n]$, whenever $y_h^{n+1} \not= y_h^{n}$, and guarantees the discrete isometry defect \eqref{E:DIC} for all $n \geq 1$ provided the initial guess
\rhn{$y_h^0\in\Ahk(g,\Phi)$} is an approximate isometry \rhn{such that $D_h[y_h^0]\le\tau$. This is achieved by selecting $\eps$ proportional to $\tau$, depending on $E_h[y_h^0], g,\Phi,f$.}

We also prove $\Gamma$-convergence of the discrete energy $E_h$
to the continuous energy $E$ and that global minimizers \rhn{$y_h \in \mathbb{A}_{h,\eps}^k(g,\Phi)$ of $E_h$ converge in $\lom$ to global minimizers $y \in \mathbb{A}(g,\Phi)$ of $E$  as $\hb\to0$.
A key ingredient for $\Gamma$-convergence is reconstruction of a
suitable discrete Hessian $H_h[y_h]$ of $y_h$, which uses the broken Hessian $D^2_h y_h$ and the jumps $[y_h]$ and $[\nabla_h y_h]$ of $y_h$ and $\nabla_h y_h$ across interelement boundaries. Such $H_h[y_h]$ is an $L^2$-function in $\Omega$ that converges weakly to $D^2 y$ under suitable mesh conditions. A similar approach is employed in \cite{DiPietroErn} for second order problems and later in \cite{Pryer} for the $p-$biharmonic equation, where $p=2$. We refer to Section~\ref{S:DiscreteHessian} for a discussion of properties of $H_h[y_h]$ and critical differences with \cite{DiPietroErn,Pryer}.}

	Our approach is motivated by the flexibility of dG compared to Kirchhoff elements \rhn{\cite{Bartels,BartelsBook}.} First of all, Kirchhoff elements require polynomial degree $k=3$ and suffer from a complicated implementation that involves a discrete gradient that maps the gradient of the discrete deformation to another space. Since this is not implemented in standard finite element libraries, the above difficulty hinders the impact of the method in the engineering community. In contrast, the proposed dG approach works for $k\geq 2$,  does not require such map (or more precisely, the map is the trivial elementwise differentiation) and its implementation is standard. Moreover, quadrilateral \rhn{Kirchhoff elements are not amenable to adaptively refined partitions, at least in theory; the current dG theory, instead, allows for graded meshes $\mathcal{T}_h$.} It is worth mentioning as well that imposing Dirichlet boundary conditions weakly, instead of directly in the admissible set as in \rhn{\cite{Bartels,BartelsBook,BaBoNo,BaBoMuNo},} allows for more flexibility. 
	In particular, this alleviates the constraints on the construction of the recovery sequence necessary for  the $\Gamma$-convergence of $E_h$ towards $E$; see Section~\ref{ss:limsup}. Upon dropping the penalty term of jumps of $\nabla y_h$ across a prescribed curve, dG naturally accommodates configurations with \rhn{kinks,} as is the case of origami.
	Lastly, imposing the isometry constraint numerically with Kirchhoff elements at each vertex seems to be very rigid at the expense of approximation accuracy. In contrast, our experiments with dG indicate that this approach is more accurate and adjusts better to the geometry of the problem; see Section~\ref{ss:verticalLoad}.

	\subsection{Outline}
We first construct our discrete energy functional $E_h$ in \Cref{S:DGScheme},
\rhn{and prove key properties for functions $v_h\in[\mathbb{V}_h^k]^3$,} including the coercivity of $E_h$ with respect to an appropriate mesh-dependent energy norm.
          We introduce in \Cref{S:GFlow} the discrete $H^2$-gradient flow and show it is able to find discrete minimizers of $E_h$ that satisfy the desired discrete isometry defect \eqref{E:DIC}; this sets the stage for \rhn{relations} between $\eps, \hb$ and $\tau$ in
          \eqref{E:DIC}.
          In \Cref{S:DiscreteHessian} we define the discrete Hessian $H_h[y_h]$ and derive a bound in $L^2$ and its weak convergence in $L^2$.
        This leads to the proof of $\Gamma$-convergence of the discrete energy $E_h$ to the exact energy $E$, in \Cref{S:ConvergencePl}, \rhn{as well as} the convergence of global minimizers of $E_h$ to global minimizers of $E$.  In \Cref{S:NumExPl} we present experiments that corroborate numerically the excellent properties of our method and compare it with the Kirchhoff element approach. In \Cref{S:Implementation} we provide some details for our implementation and justify the choice of a discontinuous versus a continuous space of functions $\mathbb{V}_h^k$. \rhn{We draw conclusions in Section \ref{S:conclusions} and present some technical estimates for isoparametric maps in the appendix of Section \ref{S:A-quads}.} \looseness=-1

\section{Discrete Energy and Properties of dG} \label{S:DGScheme}

We start this section by providing intuition on the derivation of the discrete energy
$E_h$, but without presenting all the details.
We then introduce the discrete dG space $\Vhk$ \rhn{for scalar functions} along with an interpolation operator
$\Pi_h: \prod_{T\in \mathcal T_h} H^1(T) \to \Vhk\cap H^1(\Omega)$, and discuss its
properties. We finally prove coercivity of $E_h$.

\subsection{Continuous energy} \label{S:Connection}
Starting from a three-dimensional hyperelasticity model, dimension reduction as the thickness of the plate decreases to zero leads to the two-dimensional energy functional \rhn{\cite{Bartels,BartelsBook,FJM}}
\begin{equation}\label{E:second-form}
E[y] = \frac{1}{2} \int_{\Omega} |H|^2 - \int_{\Omega} f \cdot y,
\end{equation}
up to a multiplicative constant for the first term.
\rhn{Hereafter, $y\in\mathbb{A}(g,\Phi)$ is an isometric deformation from $\Omega\subset\mathbb{R}^2$ into $\mathbb{R}^3$, $\nu:= \partial_1 y \times \partial_2 y$ is the unit normal} and $H=(h_{ij})_{i,j=1}^2:= (\partial_{ij} y \cdot \nu)_{i,j=1}^2 $ is
the second fundamental form of the deformed plate $y(\Omega)$.
The connection between \eqref{E:second-form} and \eqref{E:CEnergy} follows from \rhn{\eqref{E:isometry}, namely}
$$
\partial_i y \cdot \partial_j y = \delta_{ij}, \quad i,j=1,2,
$$
\rhn{where $\delta_{ij}$ is the Kronecker delta.}
Differentiating with respect to \rhn{the cartesian coordinates} $x_1$ and $x_2$ and using simple algebraic manipulations, these relations imply
$$
\partial_k y \cdot \partial_{ij} y = 0
\quad\Longrightarrow\quad
\partial_{ij} y \parallel \nu = \partial_1 y \times \partial_2 y, \quad i,j,k=1,2.
$$
This, combined with the definition $h_{ij} = \partial_{ij} y \cdot \nu$,
leads in turn to 
$$
|h_{ij}|^2= |\partial_{ij} y|^2,  \quad \rhn{i,j=1,2.}
$$
Similarly, using that 
$$
 \partial_1 \left( \partial_{12} y \cdot \partial_2 y \right) = 0 \quad \textrm{and} \quad \partial_2 \left( \partial_{11} y\cdot \partial_2 y \right) = 0,
$$ 
we obtain
$$
|\partial_{12} y|^2 = \partial_{11} y \cdot \partial_{22} y.
$$
Therefore, the isometry property \eqref{E:isometry} yields the pointwise relations 
\begin{equation}\label{E:bilaplacian}
|H|^2 = |D^2y|^2 = |\Delta y|^2, 
\end{equation}
so that the \rhn{nonlinear} expression \eqref{E:second-form} of $E[y]$ coincides with \eqref{E:CEnergy}, namely
\begin{equation}\label{e:bilaplacian_energy}
  E[y] = \frac{1}{2} \int_\Omega |D^2 y|^2 - \int_{\Omega} f \cdot y.
\end{equation}
The Euler-Lagrange equation for a minimizer $y\in \mbV(g,\Phi)$ of \eqref{e:bilaplacian_energy} reads
\begin{equation} \label{E:Bilaplacian}
\int_\Omega D^2 y : D^2 v  = \int_{\Omega} f \cdot v  \quad \forall v \in \mbV(0,\bz),
\end{equation}
where $\mbV(g,\Phi)$ is defined in \eqref{E:affine-manifold}. \rhn{If $y=(y_n)_{n=1}^3$, then the strong form of \eqref{E:Bilaplacian} is
\begin{equation}\label{E:strong}
 \div\div D^2y_n = \Delta^2 y_n = \sum_{i,j=1}^2 \partial_{jiij} y_n = f_n \quad\text{in } \Omega, \quad n=1,2,3,
\end{equation}
whereas the natural boundary conditions imposed on $\partial\Omega\setminus\partial_D\Omega$ are
\begin{equation}\label{E:Neumann}
\begin{split}
  &\partial_\mu \nabla y_n = D^2y_n \, \mu = \sum_{i=1}^2 \partial_{ij} y_n \, \mu_i = 0, \qquad n=1,2,3, \\
  &\partial_\mu\Delta y_n = (\div D^2y_n) \mu = \sum_{i,j=1}^2 \partial_{iij} y_n \, \mu_j = 0, \qquad n=1,2,3.
\end{split}
\end{equation}
Hereafter, $\mu$ denotes the outward unit normal to $\Omega$.}

\subsection{Discrete energy}

\rhn{
We denote by $\mathbb{P}_k$ (resp. $\mathbb Q_k$) the space of polynomial functions of degree at most $k\geq 0$ (resp. at most $k$ on each variable). Moreover, $\widehat T$ stands for the reference element, which is either the unit triangle for $\mathbb{P}_k$ or the unit square for $\mathbb Q_k$. A generic element $T:=F_T(\widehat T)$ is given by a map $F_T \in \lbrack \mathbb P^k \rbrack^2$ (resp. $\lbrack\mathbb Q^k \rbrack^2$) provided $\widehat T$ is the unit triangle (resp. square). Notice that when $k=1$, $F_T$ is affine for triangles $T$ and bi-linear for quadrilaterals $T$. 
}

\rhn{
We consider  a sequence of meshes $\{ \mathcal T_h \}_{h>0}$ of $\Omega$  made of closed shape regular but possibly graded elements $T$ \cite{CiarletRaviart}. We assume that $\Omega$ can be exactly represented by such subdivisions, i.e. we do not account in the analysis below for variational crimes induced by the approximation of the boundary $\partial\Omega$. Hereafter, we denote by $h$ a mesh density function which is equivalent locally to the size $h_T$ of $T$ and $h_e$ of an edge $e$; $\hb$ written as a parameter also signifies the largest value of $h_T$. 
In order to handle hanging nodes (necessary for graded meshes based on quadrilaterals), we assume that all the elements within each domain of influence have comparable diameters (independently of $h$). We refer to Sections 2.2.4 and 6 of Bonito-Nochetto \cite{BoNo} for precise definitions and properties. At this point, we only point out that sequences of meshes made of quadrilaterals with at most one hanging node per side satisfy this assumption. We append a sub-index $h$ to differential operators to indicate that they are applied element-wise. For instance, $\nabla_h v$ is defined by $\nabla_h v |_T := \nabla v|_T$, $T\in \mathcal T_h$.
}

From now on $c$ and $C$ are generic constants independent of $h$ but \rhn{perhaps} depending on the shape-regularity constant of the mesh sequence $\{ \mathcal T_h \}_{h >0}$.
Also, we use the notation $A \lesssim B$ to indicate $A \leq c B$, where $c$ is a constant independent of $h$, $A$ and $B$.

\rhn{Let $\Vhk$ be the space of discontinuous functions over the mesh $\mathcal T_h$}
\begin{equation}\label{E:discrete-space}
  \Vhk := \left\{ v_h \in \lom \  : \quad \rhn{v_h \circ F_T \in \mathbb{P}_k \ (\text{resp. } \mathbb{Q}_k) \quad \forall T \in \mathcal T_h} \right\} 
\end{equation}
\rhn{of degree $k\ge 2$. We point out that the discrete functions $v_h\in\Vhk$ are not polynomials on the physical elements $T$ unless $F_T \in \lbrack\mathbb P^1\rbrack^2$. This is consistent with the implementation in the deal.ii library\cite{dealii85} used to obtain the numerical illustrations of Section~\ref{S:NumExPl}. Similarly, the Hessians of $v_h|_T$ and $v_h\circ F_T$ are not proportional because $D^2 F_T \ne \boldsymbol{0}$, except when $F_T \in \lbrack\mathbb P^1\rbrack^2$. We account for this delicate issue in Section \ref{S:A-quads}.}

We denote by $\Ei$ the collection of edges of $\mathcal T_h$ contained in $\Omega$ and by $\Eb$ those contained \rhn{in} $\partial_D\Omega$; hence $\calE_h=\Ei\cup\Eb$ is the set of active interelement boundaries. We further denote the interior skeleton and the boundary counterpart by
\begin{equation}\label{E:skeleton}
  \Gamma_h^0:=\cup\{e: e\in\Ei\},
  \quad
  \Gamma_h^b:=\cup\{e: e\in\Eb\},
\end{equation}
and by $\Gamma_h:=\Gamma_h^0\cup\Gamma_h^b$ the full skeleton.

\rhn{As is customary for dG methods, we need to introduce jumps and averages on edges.}
For $e \in \Ei$ we fix $\mu:=\mu_e$ to be one of the two unit normals to $e$ in $\Omega$; this choice is irrelevant for the discussion below. 
Given $v_h\in \Vhk$,
we denote its broken gradient by $\nabla_h v_h$ and \rhn{the {\it jumps} of $v_h$ and $\nabla_h v_h$ across interior edges by
\begin{equation}\label{jumps}
  [v_h]_e:= v_h^- -v_h^+,
  \quad
  [\nabla_h v_h]_e:= \nabla_h v_h^- -\nabla_h v_h^+, \quad\forall \, e\in \Ei,
\end{equation}
where $v_h^{\pm}(x) = \lim_{s \to 0^+} v_h(x\pm s~\mu_e)$ and $x\in e$. In order to deal with Dirichlet boundary data $(g,\Phi)$ we resort to a Nitsche's approach; hence we do not impose essential restrictions on the discrete space $[\Vhk]^3$. However, to simplify the notation later it turns out to be convenient to introduce the discrete sets $\Vhk(g,\Phi)$ and $\Vhk(0,\bz)$ which mimic the continuous counterparts $\mbV(g,\Phi)$ and $\mbV(0,\bz)$ but coincide with $[\Vhk]^3$. In fact, we say that $v_h\in[\Vhk]^3$ belongs to $\Vhk(g,\Phi)$ provided the boundary jumps of $v_h$ are defined to be
\begin{equation}\label{E:bd-jumps}
  [v_h]_e := v_h -  g,
  \quad
  [\nabla_h v_h]_e := \nabla_h v_h - \Phi,
  \quad \forall \, e\in\Eb.
\end{equation}
We stress that $\|[v_h]\|_{L^2(\Gamma_h^b)}$ and $\|[\nabla_h v_h]\|_{L^2(\Gamma_h^b)} \to 0$ imply $v_h\to g$ and $\nabla_h v_h \to\Phi$ in $L^2(\partial_D\Omega)$ as $\hb\to0$; hence the connection between $\Vhk(g,\Phi)$ and $\mbV(g,\Phi)$. Therefore, the sets $[\Vhk]^3$ and $\Vhk(g,\Phi)$ coincide but the latter is not a space because it carries the additional information of boundary jumps, namely
\begin{equation}\label{discrete-set}
  \Vhk (g,\Phi) := \Big\{ v_h\in [\Vhk]^3: \
  [v_h]_e, \, [\nabla_h v_h]_e \text{ given by \eqref{E:bd-jumps} for all } e\in\Eb \Big\}.
\end{equation}
In the same vein, we will deal with discrete {\it test} functions $v_h\in\Vhk (0,\bz)$ for which boundary jumps are given by
$[v_h]_e := v_h$ and $[\nabla_h v_h]_e := \nabla_h v_h$ for all $e \in \Eb$, which is consistent with \eqref{E:bd-jumps} for $g=0$, $\Phi=\bz$.
We observe again that the sets $\Vhk (0,\bz)$ and $[\Vhk]^3$ are the same.
We will not write the subscript $e$ whenever no confusion arises.
On the other hand, the definiton of {\it average} of $v_h\in [\Vhk]^3$ across an edge $e\in\calE_h$ is independent of Dirichlet conditions and is thus given by}
\begin{equation}\label{averages}
\! \{ v_h \}:=
\begin{cases}
  \frac 1 2 (v_h^+ + v_h^-) & e\in \Ei \\
  v_h^- & e\in \Eb
\end{cases},
\quad
\! \{ \nabla_h v_h \}:=
\begin{cases}
  \frac 1 2 (\nabla_h v_h^+ + \nabla_h v_h^-) & e\in \Ei \\
  \nabla_h v_h^- & e\in \Eb
\end{cases}.
\end{equation}  
\rhn{Definitions \eqref{jumps}, \eqref{E:bd-jumps} and \eqref{averages} extend to the
{\it broken energy space}
\begin{equation}\label{E:broken-energy}
  \rhn{\bE(\mathcal T_h):=\prod_{T\in \mathcal T_h} H^1(T)}.
\end{equation}

Before introducing the discrete energy $E_h$, we derive the corresponding bilinear form $a_h(\cdot,\cdot)$ in the usual manner. This is the first instance where the definitions of $\Vhk(g,\Phi)$ and $\Vhk(0,\bz)$ become critical. We integrate by parts twice the strong equation \eqref{E:strong} over elements $T\in\mathcal{T}_h$ against a test function $v_h\in\Vhk(0,\bz)$. We assume $y$ to be smooth, whence the jumps $[\partial_\mu \nabla y]_e$ and $[\partial_\mu \Delta y]_e$ vanish on edges $e\in\calE_h$, to arrive at
\begin{align*}
  \big( f, v_h \big)_{L^2(\Omega)} =
  \big(  D^2 y, D_h^2 v_h \big)_{L^2(\Omega)}
  - \big( \{\partial_\mu \nabla y\}, [\nabla v_h] \big)_{L^2(\Gamma_h)}
  + \big( \{\partial_\mu \Delta y\}, [v_h] \big)_{L^2(\Gamma_h)}.
\end{align*}
We next use that $[y]_e=0$ and $[\nabla y]_e=0$ for all edges $e\in\calE_h$: for interior edges $e\in\Ei$ this is because $y$ is smooth, whereas for boundary edges $e\in\Eb$ it is a consequence of $y=g$ and $\nabla y = \Phi$ on $\partial_D\Omega$ and definition \eqref{E:bd-jumps}. We can thus symmetrize the previous equality and add vanishing penalty terms
\begin{equation}\label{E:def-ah}
\begin{aligned}
  \big( f, v_h \big)_{L^2(\Omega)} &=
  \big(  D^2 y, D_h^2 v_h \big)_{L^2(\Omega)}
  \\
  & - \big( \{\partial_\mu \nabla y\}, [\nabla v_h] \big)_{L^2(\Gamma_h)}
  - \big( \{\partial_\mu \nabla_h v_h\}, [\nabla y] \big)_{L^2(\Gamma_h)}
  \\
  & + \big( \{\partial_\mu \Delta y\}, [v_h] \big)_{L^2(\Gamma_h)}
  + \big( \{\partial_\mu \Delta_h v_h\}, [y] \big)_{L^2(\Gamma_h)}
  \\
  & + \gamma_1 \big( h^{-1}\left[ \nabla y \right], \left[ \nabla_h v_h \right] \big)_{L^2(\Gamma_h)} + \gamma_0 \big(h^{-3/2} \left[ y \right], \left[ v_h \right] \big)_{L^2(\Gamma_h)}
  \\
  & =: a_h(y,v_h),
\end{aligned}
\end{equation}
with penalty parameters $\gamma_0,\gamma_1>0$ to be determined. 
Hereafter, we denote by $\left(\cdot,\cdot\right)_{L^2(\mathcal M)}$ the $L^2(\mathcal M)$ scalar product and vector versions of it.
Since \eqref{E:def-ah} reduces to \eqref{E:Bilaplacian} for $v_h\in\mbV(0,\bz)$, we resort to \eqref{E:def-ah} to define the discrete equation for $y_h$
\begin{equation}\label{E:bilinear}
\begin{aligned}
  y_h\in\Vhk(g,\Phi): \quad
  a_h(y_h,v_h) = (f,v_h)_{ \lom }  \quad\forall \, v_h\in\Vhk(0,\bz).
\end{aligned}
\end{equation}
We point out that the Dirichlet conditions in \eqref{E:affine-manifold} are enforced in the Nitsche's sense. Since $a_h$ is symmetric by construction, we define the discrete energy to be
\begin{equation}\label{E:DEn}
  E_h[y_h] := \frac12 \, a_h(y_h,y_h) - (f,y_h)_{ \lom }, 
\end{equation}
The first variation $\delta E_h[y_h;v_h]$ of  $E_h[y_h]$ in the direction $v_h\in\Vhk(0,\bz)$ yields \eqref{E:bilinear}.}

In order for \eqref{E:DEn} to be meaningful with respect to the original 
minimization problem in \eqref{E:CEnergy}-\eqref{E:ContSpace}, we define
the {\it discrete admissible set} $\Ahk(g,\Phi)$, a discrete analogue of
$\mathbb{A}(g,\Phi)$
in \eqref{E:ContSpace} that involves the discrete isometry defect $D_h$, to be
\begin{equation} \label{E:Ahk}
\! \Ahk(g,\Phi):= \left\{
\begin{aligned}
y_h \in \Vhk(g,\Phi): \ D_h[y_h] = 
  \sum_{T \in \mathcal{T}_h } \left| \int_T  (\nabla_h y_h)^T \nabla_h y_h  - I  \right|  \leq \eps
\end{aligned}
\right\} 
\end{equation}
with parameter \rhn{$0<\eps= \eps(\hb) \to 0$ as $\hb \to 0$. We will see in Section\nobreakspace \ref {S:GFlow} that the discrete gradient flow used to construct discrete solutions yields $\eps = C\hb$, where $C$ depends on $E_h[y_h^0],g,\Phi,f$, $y_h^0$ being the initial iterate,} and other geometric constants.
  
\subsection{Interpolation onto Continuous Piecewise Polynomials}\label{S:interpolation}
%
For several reasons we need to interpolate from the broken energy space \rhn{$\bE(\mathcal T_h)$ defined in \eqref{E:broken-energy}} onto the space $\mathring\mbV_h^k:=\mbV_h^k\cap H^1(\Omega)$ of continuous \rhn{functions which are polynomials of degree $\le k$ over the reference element $\widehat T$.}
We refer to \cite{BMN:02,BoNo,Brenner1,Brenner,Buffa} for such interpolation estimates.
We now construct a Cl\'ement type interpolation operator $\Pi_h:\bE(\mathcal T_h)\to\mathring\mbV_h^k$, thereby extending \cite{BMN:02,BoNo} to $\bE(\mathcal T_h)$, because
of its simplicity and fitness with our application of it.

We construct $\Pi_h$ in two steps. We first compute the local $L^2$-projection
$P_h:\bE(\mathcal T_h)\to\mbV_h^k$, which for every $v\in\bE(\mathcal T_h)$ and
$T\in\mathcal T_h$ reduces to the equation
\begin{equation}\label{e:proj}
P_h v \in \mbV_h^k(T): \quad \int_T (P_h v - v) w = 0
\quad\forall w\in \mbV_h^k(T),
\end{equation}
where $\mbV_h^k(D)$ (resp. $\mathring\mbV_h^k(D)$) stands for the restriction of functions in $\Vhk$ (resp. $\mathring\mbV_h^k$) to $D \subset \Omega$.
We next define the Cl\'ement interpolation operator $I_h:\mbV_h^k\to \mathring\mbV_h^k$
of \cite{BMN:02,BoNo} as follows. Given the canonical basis functions
$\{\phi_i\}_{i=1}^N$ of $\mathring\mbV_h^k$ with supports $\{\omega_i\}_{i=1}^N$
associated with nodes $\{x_i\}_{i=1}^N$, we compute the $L^2$-projection
of $v\in\mbV_h^k$ on stars $\omega_i$
\begin{equation}\label{E:def-vi}
v_i\in \mathring\mbV_h^k(\omega_i): \quad
\int_{\omega_i} (v-v_i) w = 0 \quad\forall w\in \mathring\mbV_h^k(\omega_i),
\end{equation}
and define $I_h v := \sum_{i=1}^N v_i(x_i) \phi_i \in \mathring\mbV_h^k$ and
\rhn{
\begin{equation}\label{E:inter-oper}
  \Pi_h:= I_h \circ P_h: \bE(\mathcal T_h)\to\mathring\mbV_h^k.
\end{equation}
If $\mathcal{T}_h$ has hanging nodes, the stars $\omega_i$ are related to the notion of domain of influence; we refer to Section 6 of \cite{BoNo} for details. We make the same assumptions as \cite{BoNo} on $\mathcal{T}_h$, which in turn imply that all elements of $\mathcal{T}_h$ within $\omega_i$ possess comparable size.}
\begin{Lemma}[interpolation]\label{L:interpolation}
The interpolation operator
\rhn{$\Pi_h$ defined in \eqref{E:inter-oper}} is invariant in the
space $\mathring\mbV_h^k$ and satisfies the following estimate
for all $v\in\bE(\mathcal{T}_h)$
\begin{gather}
\label{stability}
\|\nabla\Pi_h v\|_{L^2(\Omega)} + \|h^{-1}(v - \Pi_h v)\|_{L^2(\Omega)}
\lesssim \|\nabla_h v\|_{L^2(\Omega)} + \|h^{-1/2} [v]\|_{L^2(\Gamma_h^0)},
\end{gather}
\rhn{where we recall that $\Gamma_h^0$ stands for the interior skeleton of $\mathcal{T}_h$ defined in \eqref{E:skeleton}.}
\end{Lemma}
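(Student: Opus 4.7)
The plan is to exploit the two-step construction $\Pi_h = I_h\circ P_h$, reducing everything to known properties of the $L^2$-projection $P_h$ on each element and of the Clément-type operator $I_h$ on $\mathbb{V}_h^k$ already established in \cite{BMN:02,BoNo}.

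\textbf{Step 1: Invariance on $\mathring{\mathbb{V}}_h^k$.} If $v\in\mathring{\mathbb{V}}_h^k$, then in particular $v|_T\in\mathbb{V}_h^k(T)$ for every $T$, so \eqref{e:proj} is solved by $P_h v=v$. Since $v$ is continuous (hence $v|_{\omega_i}\in\mathring{\mathbb{V}}_h^k(\omega_i)$), \eqref{E:def-vi} is solved by $v_i = v|_{\omega_i}$, whence $v_i(x_i)=v(x_i)$ and $I_h v = \sum_i v(x_i)\phi_i = v$. Therefore $\Pi_h v=v$.

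\textbf{Step 2: Reduce to $I_h$ on $\mathbb{V}_h^k$.} I would apply the Cl\'ement-type bound for $I_h$ from \cite{BMN:02,BoNo}, valid for all $w\in\mathbb{V}_h^k$,
\begin{equation*}
\|\nabla I_h w\|_{L^2(\Omega)} + \|h^{-1}(w-I_h w)\|_{L^2(\Omega)}
\lesssim \|\nabla_h w\|_{L^2(\Omega)} + \|h^{-1/2}[w]\|_{L^2(\Gamma_h^0)},
\end{equation*}
with $w=P_h v$, and split
\begin{equation*}
v-\Pi_h v = (v-P_h v) + (P_h v - I_h P_h v).
\end{equation*}
Then
\begin{align*}
\|\nabla\Pi_h v\|_{L^2(\Omega)} + \|h^{-1}(v-\Pi_h v)\|_{L^2(\Omega)}
&\lesssim \|\nabla_h P_h v\|_{L^2(\Omega)} + \|h^{-1/2}[P_h v]\|_{L^2(\Gamma_h^0)}\\
&\quad + \|h^{-1}(v-P_h v)\|_{L^2(\Omega)}.
\end{align*}

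\textbf{Step 3: Control each term via properties of $P_h$.} On every $T\in\mathcal{T}_h$, the local $L^2$-projection $P_h$ enjoys the standard estimates
\begin{equation*}
\|\nabla P_h v\|_{L^2(T)} \lesssim \|\nabla v\|_{L^2(T)},
\qquad
\|v-P_h v\|_{L^2(T)} \lesssim h_T\|\nabla v\|_{L^2(T)},
\end{equation*}
obtained by comparing $P_h v$ with the constant element average and invoking the Bramble--Hilbert lemma together with inverse estimates on the reference element (uniform under shape regularity). The first bound controls $\|\nabla_h P_h v\|$ by $\|\nabla_h v\|$, and the second controls $\|h^{-1}(v-P_h v)\|_{L^2(\Omega)}$ by $\|\nabla_h v\|_{L^2(\Omega)}$. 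For the jump term I would use the triangle inequality
\begin{equation*}
[P_h v] = [v] + (P_h v^- - v^-) - (P_h v^+ - v^+)
\end{equation*}
on each $e\in\Gamma_h^0$ combined with the discrete trace inequality $\|w\|_{L^2(\partial T)}^2\lesssim h_T^{-1}\|w\|_{L^2(T)}^2 + h_T\|\nabla w\|_{L^2(T)}^2$ applied to $w=P_h v-v$, yielding
\begin{equation*}
\|h^{-1/2}(P_h v-v)\|_{L^2(\partial T)} \lesssim \|\nabla v\|_{L^2(T)}.
\end{equation*}
Summing over elements and edges gives $\|h^{-1/2}[P_h v]\|_{L^2(\Gamma_h^0)} \lesssim \|\nabla_h v\|_{L^2(\Omega)} + \|h^{-1/2}[v]\|_{L^2(\Gamma_h^0)}$.

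\textbf{Expected difficulty.} The proof itself is standard once the two building blocks are in place; the only delicate bookkeeping is ensuring that the shape-regularity, local quasi-uniformity within stars $\omega_i$, and the treatment of hanging nodes (via the domain-of-influence framework of \cite{BoNo}) are used consistently so that the hidden constants are truly $h$-independent. In particular, the discrete trace inequality on elements adjacent to hanging nodes and the summation of edge contributions with finite overlap are the places where the assumptions on $\mathcal{T}_h$ stated before \Cref{L:interpolation} must be invoked.
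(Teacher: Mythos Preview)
Your proof is correct and follows essentially the same approach as the paper: both arguments establish invariance by composition, derive the standard local $L^2$-projection bounds for $P_h$ (including the trace estimate needed for $\|h^{-1/2}[P_h v]\|_{L^2(\Gamma_h^0)}$), invoke the Cl\'ement-type estimate for $I_h$ on $\mathbb{V}_h^k$ from \cite{BMN:02,BoNo}, and combine via the splitting $v-\Pi_h v = (v-P_h v)+(P_h v - I_h P_h v)$. The only cosmetic difference is that the paper briefly re-derives the $I_h$ bound via a finite-dimensional norm-equivalence argument on rescaled stars, whereas you cite it directly.
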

\begin{proof}
The operator $\Pi_h$ is invariant in $\mathring\mbV_h^k$ because so are $P_h$ and
$I_h$. We next examine properties of $P_h$, \rhn{$I_h$ and $\Pi_h$} separately.
\rhn{Recall that all $T\in\mathcal{T}_h$ are closed.}

\smallskip\noindent
{\it Step 1: Operator $P_h$.} Since $P_h$ is an elementwise $L^2$-projection,
    we easily deduce
$$
\|P_h v - v\|_{L^2(T)} + h_T^{1/2} \|P_h v -v\|_{L^2(\partial T)}
+ h_T \|\nabla P_h v\|_{L^2(T)} \lesssim h_T \|\nabla v\|_{L^2(T)}.
$$
Given \rhn{an interior edge $e\in\Ei$, let $T^\pm\in\mathcal{T}_h$ be the two adjacent
elements that
satisfy $e=T^+ \cap T^-$ and $v^\pm$ be} the restrictions of $v$ to $T^\pm$. If
$\omega(e):= T^+ \cup T^-$, a simple calculation now shows
\begin{align*}
  \|[P_hv]\|_{L^2(e)} & \le \|P_h v^+-v^+\|_{L^2(e)} + \|P_h v^--v^-\|_{L^2(e)}
  + \|v^+ - v^-\|_{L^2(e)}
  \\
  & \lesssim h_e^{1/2} \|\nabla_h v\|_{L^2(\omega(e))} + \|[v]\|_{L^2(e)}.
\end{align*}

\smallskip\noindent
{\it Step 2: Operator $I_h$.}
\rhn{Let $\omega(T)$ be the union of stars $\omega_i$ alluded to in \eqref{E:def-vi} that intersect $T$; note that all elements within $\omega(T)$ have comparable size \cite{BoNo}. Moreover, let $\gamma^0(\omega(T))$ and $\gamma_i^0=\gamma^0(\omega_i)$ be the skeletons of $\omega(T)$ and $\omega_i$ (interelement boundaries internal to these sets).
}
We recall (see e.g. \cite{BoNo}) that to prove the estimate
\[
\|\nabla I_h v\|_{L^2(T)} + \|h^{-1}(I_hv-v)\|_{L^2(T)}
\lesssim  \|\nabla_h v\|_{L^2(\omega(T))} + \|h^{-1/2}[v]\|_{L^2(\gamma^0(\omega(T)))}
\]
for all $v\in\mbV_h^k$, it suffices to derive the bounds
\[
\|\nabla v_i\|_{L^2(\omega_i)} + \|h^{-1}(v-v_i)\|_{L^2(\omega_i)}
\lesssim \|\nabla_h v\|_{L^2(\omega_i)} + \|h^{-1/2}[v]\|_{L^2(\gamma_i^0)},
\]
where $v_i$ is defined in \eqref{E:def-vi}.
Since the dimension of the space $\mbV_h^k(\omega_i)$ is finite and depends only
on shape regularity, all norms in $\mbV_h^k(\omega_i)$ are
equivalent and independent of meshsize \rhn{upon rescaling $\omega_i$ to unit size.}
We further observe that if the 
right hand side of the last inequality vanishes, then $v$ is constant in $\omega_i$.
The definition of $v_i$ thus gives $v_i=v$ and the left hand side vanishes,
thereby showing that the desired estimate is valid \rhn{in the rescaled $\omega_i$.}
The powers of meshsize result from a standard scaling argument \rhn{from unit size back
to actual size.}

\smallskip\noindent
{\it Step 3: Operator $\Pi_h$.} Combining the estimates for $P_h$ and $I_h$ yields
\[
\|\nabla\Pi_h v\|_{L^2(\Omega)} \lesssim \|\nabla P_h v\|_{L^2(\Omega)}
+ \|h^{-1/2}[P_h v]\|_{L^2(\Gamma_h^0)}
 \lesssim \|\nabla_h v\|_{L^2(\Omega)}
+ \|h^{-1/2}[v]\|_{L^2(\Gamma_h^0)}.
\]
A similar estimate is valid for $\|h^{-1}(v-\Pi_h v)\|_{L^2(\Omega)}$. This concludes
the proof.
\end{proof}

We have written \eqref{stability} in a convenient form
for our later application. Note that it only requires that $\mbV_h^k$ contains piecewise
constants and make no reference to the actual polynomial degree $k$. However, since
$\Pi_h$ is invariant on $\mathring\mbV_h^k$, we may apply \eqref{stability} to $v-p$ where
$p\in\mathring\mbV_h^k$ is the best $H^1$-approximation of $v\in H^{k+1}(\Omega)$
\rhn{to get}
\[
\|v-\Pi_h v\|_{L^2(\Omega)} = \|(v-p)-\Pi_h(v-p)\|_{L^2(\Omega)}
\lesssim h \|\nabla(v-p)\|_{L^2(\Omega)} \lesssim h^{k+1} \rhn{\|v\|_{H^{k+1}(\Omega)};}
\]
\rhn{see Lemma \ref{L:error-curved-quad} (error estimates for curved quadrilaterals).} \rhn{Another consequence of the proof of Lemma \ref{L:interpolation} is the {\it local}
interpolation estimate
\begin{equation*}
\|\nabla\Pi_h v\|_{L^2(T)} + \|h^{-1}(v - \Pi_h v)\|_{L^2(T)}
\lesssim \|\nabla_h v\|_{L^2(\omega(T))} + \|h^{-1/2} [v]\|_{L^2(\gamma^0(\omega(T)))}
\end{equation*}
for all $T\in\mathcal{T}_h$; recall that $\omega(T)$ and $\gamma^0(\omega(T))$ are
defined in Step 2 of Lemma \ref{L:interpolation}.
}

\begin{Corollary}[boundary error estimate]\label{C:boundary}
The following estimate is valid
\begin{equation}\label{E:boundary}
\|h^{-1/2}(v - \Pi_h v)\|_{L^2(\partial\Omega)}
\lesssim \|\nabla_h v\|_{L^2(\Omega)} + \|h^{-1/2} [v]\|_{L^2(\Gamma_h^0)}
\quad\forall \, v\in\bE(\mathcal{T}_h).
\end{equation}    
\end{Corollary}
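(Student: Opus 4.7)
The plan is to reduce the boundary estimate to the already-established interior estimate of Lemma \ref{L:interpolation} (and its local version stated right after it) via a standard scaled trace inequality on the boundary layer of elements.

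First, I would fix an element $T \in \mathcal{T}_h$ with an edge $e \subset \partial\Omega$ and write, for $w \in H^1(T)$, the scaled trace inequality
\[
\|w\|_{L^2(e)}^2 \lesssim h_T^{-1} \|w\|_{L^2(T)}^2 + h_T \|\nabla w\|_{L^2(T)}^2,
\]
which follows from a standard scaling argument from the reference element. Applying this to $w = v - \Pi_h v \in H^1(T)$ (since both $v|_T$ and $\Pi_h v|_T$ are in $H^1(T)$) and multiplying through by $h_e^{-1}$ (using $h_e \simeq h_T$ by shape regularity) yields
\[
h_e^{-1}\|v - \Pi_h v\|_{L^2(e)}^2 \lesssim \|h^{-1}(v - \Pi_h v)\|_{L^2(T)}^2 + \|\nabla_h v\|_{L^2(T)}^2 + \|\nabla \Pi_h v\|_{L^2(T)}^2,
\]
where I have used the triangle inequality to split $\nabla(v - \Pi_h v)$.

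Next, I would invoke the local version of the interpolation estimate (noted immediately after the proof of Lemma \ref{L:interpolation}) to control each of the three terms on the right-hand side by
\[
\|\nabla_h v\|_{L^2(\omega(T))}^2 + \|h^{-1/2}[v]\|_{L^2(\gamma^0(\omega(T)))}^2.
\]
For the middle term $\|\nabla_h v\|_{L^2(T)}^2$ this is trivial since $T \subset \omega(T)$.

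Finally, I would sum over all boundary edges $e \in \Eb$ (equivalently, over all elements $T$ with an edge on $\partial\Omega$). By the shape regularity assumption and the controlled overlap of the patches $\omega(T)$ and their skeletons $\gamma^0(\omega(T))$ (a consequence of the mesh hypotheses from \cite{BoNo}), the summed right-hand side is bounded by a constant multiple of
\[
\|\nabla_h v\|_{L^2(\Omega)}^2 + \|h^{-1/2}[v]\|_{L^2(\Gamma_h^0)}^2,
\]
which yields \eqref{E:boundary} after taking square roots. I do not expect any serious obstacle here; the only mild subtlety is handling the term $\|\nabla \Pi_h v\|_{L^2(T)}$, which is not directly an approximation error but is controlled by the stability part of \eqref{stability} applied locally on $\omega(T)$.
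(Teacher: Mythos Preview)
Your proposal is correct and follows essentially the same approach as the paper: apply the scaled trace inequality to $v-\Pi_h v$ on each boundary element, sum, and invoke the interpolation estimate \eqref{stability} (the paper uses the global form directly rather than the local version, but this is cosmetic). One small notational slip: you write ``sum over all boundary edges $e\in\Eb$,'' but $\Eb$ in this paper denotes only the edges on $\partial_D\Omega$; since the estimate is over all of $\partial\Omega$ you should sum over every boundary edge, as your parenthetical correctly indicates and as the paper itself is careful to note.
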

\begin{proof}
Let $e$ be a generic boundary edge, not necessarily in $\Eb$, and
$T\in\mathcal{T}_h$ be an
adjacent element (i.e. $e\subset\partial T$). The scaled trace inequality reads
\[
\|h^{-1/2}(v-\Pi_h v)\|_{L^2(e)} \lesssim
\|h^{-1}(v-\Pi_h v)\|_{L^2(T)} + \|\nabla(v-\Pi_h v)\|_{L^2(T)}
\]
Adding over $e$, the desired estimate follows from \eqref{stability}.
\end{proof}
  
The following Friedrichs inequality is another straightforward
application of \eqref{stability};
similar estimates are proved in \cite{Brenner1,Brenner}.
We observe that the jumps $[v]$ of $v\in\bE(\mathcal T_h)$ in \eqref{stability}
are computed on the interior skeleton $\Gamma_h^0$, but the
desired estimate requires control of the trace of $v$ on $\partial_D\Omega$. 
\rhn{
\begin{Corollary}[discrete Friedrichs inequality]\label{C:Friedrichs}
There exists a constant $C_F>0$, depending on $\Omega$ and $\partial_D\Omega$,
such that for all
$v\in\bE(\mathcal{T}_h)$ there holds
\begin{equation}\label{E:Friedrichs}
\|v\|_{L^2(\Omega)} \le C_F \Big( \|\nabla_h v\|_{L^2(\Omega)}
+ \|h^{-1/2}[v]\|_{L^2(\Gamma_h^0)} + \|v\|_{L^2(\partial_D\Omega)} \Big).
\end{equation}
\end{Corollary}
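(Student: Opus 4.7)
The plan is to reduce the inequality for $v\in\bE(\mathcal{T}_h)$ to the classical continuous Friedrichs inequality on $H^1(\Omega)$ by inserting the Cl\'ement-type interpolant $\Pi_h v\in\mathring\mbV_h^k \subset H^1(\Omega)$, and then control the interpolation error via Lemma~\ref{L:interpolation} and Corollary~\ref{C:boundary}. Throughout I use that the mesh density function $h$ is bounded above by a constant depending only on $\Omega$, so factors of $h$ in the numerator can be absorbed into the generic constant.

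First, I split
\[
\|v\|_{L^2(\Omega)} \le \|v - \Pi_h v\|_{L^2(\Omega)} + \|\Pi_h v\|_{L^2(\Omega)}.
\]
For the first summand, I use the stability estimate \eqref{stability} in the form
\[
\|v-\Pi_h v\|_{L^2(\Omega)} \le \|h\|_{L^\infty(\Omega)}\,\|h^{-1}(v-\Pi_h v)\|_{L^2(\Omega)}
\lesssim \|\nabla_h v\|_{L^2(\Omega)} + \|h^{-1/2}[v]\|_{L^2(\Gamma_h^0)}.
\]
For the second summand I apply the \emph{continuous} Friedrichs inequality to $\Pi_h v\in H^1(\Omega)$, which yields a constant $C=C(\Omega,\partial_D\Omega)>0$ with
\[
\|\Pi_h v\|_{L^2(\Omega)} \le C\,\bigl(\|\nabla \Pi_h v\|_{L^2(\Omega)} + \|\Pi_h v\|_{L^2(\partial_D\Omega)}\bigr).
\]

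The $H^1$-seminorm term is controlled directly by \eqref{stability}, namely
$\|\nabla\Pi_h v\|_{L^2(\Omega)} \lesssim \|\nabla_h v\|_{L^2(\Omega)} + \|h^{-1/2}[v]\|_{L^2(\Gamma_h^0)}$. For the Dirichlet boundary term I write $\Pi_h v = v - (v-\Pi_h v)$ on $\partial_D\Omega \subset \partial\Omega$ and use the triangle inequality together with Corollary~\ref{C:boundary}:
\[
\|v-\Pi_h v\|_{L^2(\partial_D\Omega)} \le \|h^{1/2}\|_{L^\infty(\partial\Omega)}\,\|h^{-1/2}(v-\Pi_h v)\|_{L^2(\partial\Omega)}
\lesssim \|\nabla_h v\|_{L^2(\Omega)} + \|h^{-1/2}[v]\|_{L^2(\Gamma_h^0)}.
\]
Combining these bounds and collecting the constants into a single $C_F = C_F(\Omega,\partial_D\Omega)$ gives \eqref{E:Friedrichs}.

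I do not expect any serious obstacle: all the hard analytical work sits in Lemma~\ref{L:interpolation} and Corollary~\ref{C:boundary}, and the proof only needs to assemble those estimates around a classical continuous Friedrichs inequality. The one subtle point worth double-checking is that the trace control in Corollary~\ref{C:boundary} is stated on all of $\partial\Omega$ (so it applies in particular to $\partial_D\Omega$, which is an open subset of $\partial\Omega$ but whose boundary edges need not belong to $\Eb$); this is exactly why that corollary was phrased on the full boundary rather than only on $\Gamma_h^b$.
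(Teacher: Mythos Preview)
Your proposal is correct and follows essentially the same route as the paper: split $v$ as $(v-\Pi_h v)+\Pi_h v$, apply the continuous Friedrichs inequality to $\Pi_h v\in H^1(\Omega)$, and control both the bulk and boundary interpolation errors via Lemma~\ref{L:interpolation} and Corollary~\ref{C:boundary}. The only cosmetic difference is that the paper re-derives the boundary estimate from the scaled trace inequality in place rather than citing Corollary~\ref{C:boundary}, but the argument is the same.
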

}
\begin{proof}
\rhn{In view of \eqref{stability}, it suffices to prove \eqref{E:Friedrichs} for $\Pi_h v$. The standard form of the Friedrichs inequality is valid for $\Pi_hv\in H^1(\Omega)$, namely}
\[
\|\Pi_h v\|_{L^2(\Omega)} \lesssim \|\nabla \Pi_h v\|_{L^2(\Omega)}
+ \|\Pi_h v\|_{L^2(\partial_D\Omega)},
\]
where the hidden constant depends only on $\Omega$ \rhn{and $\partial_D\Omega$.
Let $e\in\Eb$ be a boundary edge in $\partial_D\Omega$ and $T\in\mathcal{T}_h$ be
an element so that $e\subset\partial T$. The scaled trace inequality
\[
\|w\|_{L^2(e)} \lesssim h_T^{-1/2}\|w\|_{L^2(T)} + h_T^{1/2}\|\nabla w\|_{L^2(T)}
\]
for $w=v-\Pi_h v$, in conjunction with \eqref{stability}, yields
\[
\|\nabla \Pi_h v\|_{L^2(\Omega)} +
\|\Pi_h v\|_{L^2(\partial_D\Omega)} \lesssim \|\nabla_h v\|_{L^2(\Omega)}
+ \|h^{-1/2}[v]\|_{L^2(\Gamma_h^0)} + \|v\|_{L^2(\partial_D\Omega)},
\]
and shows \eqref{E:Friedrichs} as desired.}
\end{proof}

\subsection{Coercivity of the Discrete Energy} \label{S:Coercivity}

We now prove that the discrete energy \eqref{E:DEn} is coercive with respect to a suitable dG \rhn{quantity that substitutes the $H^2$-norm.} As motivation, we start with a similar coercivity estimate for the continuous case.
\begin{Lemma}[coercivity of $E$] \label{L:ECoercivity}
	Let data $(g,\Phi,f)$ satisfy \eqref{E:data}. 
	For any $y \in \mbV(g,\Phi)$ there holds
	$$
	\|y\|_\htwom^2 \lesssim E[y] + \|g\|_\hom^2 + \|\Phi\|_\hom^2 + \|f\|_\lom^2,
	$$
        where $E[y]$ is defined in \eqref{E:CEnergy}.
\end{Lemma}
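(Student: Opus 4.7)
The plan is to exhibit a bound of the form $\|y\|_{H^2(\Omega)}^2 \lesssim \|D^2y\|_{L^2(\Omega)}^2 + \|g\|_{H^1(\Omega)}^2 + \|\Phi\|_{H^1(\Omega)}^2$ using Poincaré/Friedrichs-type inequalities on $\partial_D\Omega$, and then use the definition of $E[y]$ to replace $\|D^2y\|_{L^2(\Omega)}^2$ by $2E[y]$ plus a manageable linear contribution from $f$.

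First, since $\nabla y = \Phi$ on $\partial_D\Omega$ and $\partial_D\Omega \subset \partial\Omega$ is a non-empty open set, the standard Friedrichs inequality applied componentwise to $\nabla y\in[H^1(\Omega)]^{3\times 2}$ gives
\[
\|\nabla y\|_{L^2(\Omega)} \lesssim \|D^2 y\|_{L^2(\Omega)} + \|\nabla y\|_{L^2(\partial_D\Omega)} \lesssim \|D^2 y\|_{L^2(\Omega)} + \|\Phi\|_{H^1(\Omega)},
\]
where the last step uses the trace inequality $\|\Phi\|_{L^2(\partial_D\Omega)}\lesssim \|\Phi\|_{H^1(\Omega)}$. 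Applying the same argument to $y$ itself (using $y=g$ on $\partial_D\Omega$) yields
\[
\|y\|_{L^2(\Omega)} \lesssim \|\nabla y\|_{L^2(\Omega)} + \|g\|_{H^1(\Omega)} \lesssim \|D^2y\|_{L^2(\Omega)} + \|\Phi\|_{H^1(\Omega)} + \|g\|_{H^1(\Omega)}.
\]

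Next I would invoke the definition \eqref{E:CEnergy} of $E[y]$, rearrange to get
\[
\tfrac{1}{2}\|D^2 y\|_{L^2(\Omega)}^2 = E[y] + \int_\Omega f\cdot y \le E[y] + \|f\|_{L^2(\Omega)} \|y\|_{L^2(\Omega)},
\]
and then apply Young's inequality to $\|f\|_{L^2(\Omega)}\|y\|_{L^2(\Omega)}$ using the previous bound on $\|y\|_{L^2(\Omega)}$. Specifically, for sufficiently small $\delta>0$, Young's inequality yields
\[
\|f\|_{L^2(\Omega)}\|y\|_{L^2(\Omega)} \le \delta\, \|D^2y\|_{L^2(\Omega)}^2 + C_\delta\big(\|f\|_{L^2(\Omega)}^2 + \|g\|_{H^1(\Omega)}^2 + \|\Phi\|_{H^1(\Omega)}^2\big),
\]
so that the $\|D^2y\|_{L^2(\Omega)}^2$ contribution on the right can be absorbed into the left-hand side. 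Combining this with the two Friedrichs-type bounds above gives the claimed coercivity.

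The only mildly delicate point is ensuring that the Friedrichs inequality applies to $\nabla y$ rather than just $y$; this is legitimate because $\nabla y\in[H^1(\Omega)]^{3\times 2}$ has a well-defined trace on $\partial_D\Omega$ equal to $\Phi$, and $|\partial_D\Omega|>0$. Everything else is routine Cauchy–Schwarz and Young's inequality manipulation, so I don't anticipate any serious obstacle.
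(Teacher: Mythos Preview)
Your argument is correct and follows essentially the same approach as the paper: both use Friedrichs-type inequalities to bound $\|y\|_{H^1}$ in terms of $\|D^2y\|_{L^2}$ plus data, then insert this into the identity $\tfrac12\|D^2y\|_{L^2}^2 = E[y] + \int_\Omega f\cdot y$ and absorb via Young's inequality. The only cosmetic difference is that the paper applies Friedrichs to the differences $y-g$ and $\nabla y-\Phi$ (which vanish on $\partial_D\Omega$), whereas you use the version of Friedrichs with a boundary term and then the trace inequality for $g$ and $\Phi$; these are equivalent.
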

\begin{proof}
\rhn{Since $y=g$ and $\nabla y = \Phi$ on $\partial_D\Omega$, the Friedrichs inequality implies
\[
\|y-g\|_\lom \lesssim \|\nabla(y-g)\|_\lom,
\quad
\|\nabla y-\Phi\|_\lom \lesssim \|D^2 y- \nabla\Phi)\|_\lom,
\]    
whence
\begin{equation}\label{e:H1norm_estim}
	\|y\|_\hom^2 \leq C \left( \|D^2y\|_\lom^2 + \|g\|_\hom^2 +  \|\Phi\|_\hom^2 \right).\end{equation}
Consequently, we use \eqref{E:CEnergy} to find that for any $\rho >0$}
	$$
	\begin{aligned}
	\|D^2y\|_\lom^2 &= 2 E[y] + 2\int_\Omega f \cdot y \leq 2E[y] + 2\|f\|_\lom \|y\|_\lom 	\\
	&\leq 2E[y] + \frac{1}{\rho} \|f\|_\lom^2  + \rho \|y\|_\lom^2 \\
	& \leq 2E[y] + \frac{1}{\rho} \|f\|_\lom^2  + C \rho (\|D^2y\|_\lom^2 + \|g\|_\hom^2 +  \|\Phi\|_\hom^2).
	\end{aligned}
	$$
	Setting $\rho = \frac{1}{2C}$, we deduce that
	$$
	\|D^2y\|_\lom^2 \lesssim E[y] + \|g\|_\hom^2 +  \|\Phi\|_\hom^2 + \|f\|_\lom^2,
	$$
	which combined with \eqref{e:H1norm_estim} gives the asserted estimate. 
	\end{proof}

We observe that the Friedrichs inequality plays a crucial role in the previous proof
to control $y-g$ and $\nabla y -\Phi$, which vanish on $\partial_D \Omega$.
At the discrete level we face two difficulties: the lack of regularity that leads to interior jumps for $y_h$ and the Nitsche's approach that \rhn{circumvents the explicit imposition of Dirichlet boundary conditions. To deal with the first issue, we introduce a discrete $H^2$-scalar product
\begin{equation}
\begin{aligned}
  \big(w_h,v_h\big)_{H_h^2(\Omega)} := & \big(D_h^2 w_h,D_h^2 v_h \big)_{L^2(\Omega)}
  \\ &
+ \big(h^{-1}[ \nabla_h w_h],[\nabla_h v_h]\big)_{L^2(\Gamma_h)}
+ \big(h^{-3}[ w_h],[ v_h]\big)_{L^2(\Gamma_h)}
\end{aligned}      
\end{equation}
and corresponding discrete $H^2$-norm
$\| v_h \|_{H^2_h(\Omega)} := (v_h,v_h)_{H_h^2(\Omega)}^{1/2}$
for all $v_h,w_h\in\Vhk(0,\bz)$. We point out that the jumps are computed in the full
skeleton $\Gamma_h$ and for boundary edges $e\in\Eb$ we have $[v_h]=v_h$ and $[\nabla_h v_h]=\nabla_h v_h$ according to the definition of $\Vhk(0,\bz)$ in \eqref{discrete-set} for $g=0$ and $\Phi=\bz$.

In order to account for the second issue, the Nitsche's approach, we need to allow $v_h\in\Vhk(g,\Phi)$ and thus utilize the convention that boundary jumps in
\begin{equation} \label{E:energynormN}
\| v_h \|_{H_h^2(\Omega)}^2 :=   \| D_h^2 v_h\|_{L^2(\Omega)}^2
+ \| h^{-1/2} [ \nabla_h v_h] \|_{L^2(\Gamma_h)}^2 
+ \|h^{-3/2} [ v_h]\|_{L^2(\Gamma_h)}^2
\end{equation}
are defined as in \eqref{discrete-set} for $v_h\in\Vhk(g,\Phi)$. This convention will simplify notation and will not lead to confusion because we will always specify the membership of $v_h$ in the sequel. Notice that $\| . \|_{H_h^2(\Omega)}$ is not a norm on $\Vhk(g,\Phi)$.

In addition, we can derive a Friedrichs-type estimate for any $v_h\in\Vhk(g,\Phi)$.
We simply apply \eqref{E:Friedrichs} to $v_h$ and $\nabla_h v_h$, and add and subtract $g$ and $\Phi$ to the boundary terms $\|v_h\|_{L^2(\partial_D\Omega)}$ and $\|\nabla_h v_h\|_{L^2(\partial_D\Omega)}$, to obtain
        \begin{equation}\label{e:H2typePoincare}
        \|v_h\|_{L^2(\Omega)} + \|\nabla_h v_h\|_{L^2(\Omega)} \le C_F
        \Big(\|v_h\|_{H_h^2(\Omega)} + \|g\|_{H^1(\Omega)} + \|\Phi\|_{H^1(\Omega)}  \Big),
        \end{equation}
where $C_F$ (not relabeled) is proportional to the constant in \eqref{E:Friedrichs}.

\begin{Lemma}[coercivity of $E_h$] \label{L:Coercivity}
Let data $(g,\Phi,f)$ satisfy \eqref{E:data} and set
\begin{equation}\label{E:R}
  R(g,\Phi,f) := \|g\|_\hom^2 + \|\Phi\|_\hom^2 + \|f\|_\lom^2.
\end{equation}
If the penalty parameters $\gamma_0$, $\gamma_1$ in the discrete energy $E_h[y_h]$
defined in \eqref{E:DEn} are sufficiently large,
then there exists a constant $C_F^*$ depending on $C_F$ in \eqref{e:H2typePoincare} such that
$$
\|y_h\|_{H_h^2(\Omega)}^2 \lesssim  E_h[y_h] + C_F^* R(g,\Phi,f) \quad \forall  y_h\in\Vhk(g,\Phi).
$$
\end{Lemma}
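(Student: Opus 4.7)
The plan is to mirror the continuous proof of Lemma \ref{L:ECoercivity} but overcome two distinct obstacles: the bilinear form $a_h$ contains non-sign-definite consistency integrals, so $a_h(y_h,y_h) \gtrsim \|y_h\|_{H_h^2(\Omega)}^2$ is not automatic, and the Dirichlet data are enforced weakly, so the classical Friedrichs inequality must be replaced by its discrete counterpart \eqref{e:H2typePoincare}. The argument thus splits into a coercivity estimate for $a_h$ driven by choosing $\gamma_0,\gamma_1$ large, followed by a Young-type absorption against $(f,y_h)_{L^2(\Omega)}$.

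First I would set $y=v_h=y_h$ in the definition \eqref{E:def-ah} of $a_h$ to write
\[
a_h(y_h,y_h) = \|D_h^2 y_h\|_{L^2(\Omega)}^2
- 2\bigl(\{\partial_\mu\nabla y_h\},[\nabla_h y_h]\bigr)_{L^2(\Gamma_h)}
+ 2\bigl(\{\partial_\mu\Delta_h y_h\},[y_h]\bigr)_{L^2(\Gamma_h)}
+ J_h(y_h),
\]
where $J_h(y_h)$ collects the two penalty terms and reproduces exactly the jump components of $\|y_h\|_{H_h^2(\Omega)}^2$ from \eqref{E:energynormN} (up to $\gamma_0,\gamma_1$). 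Using scaled trace and inverse estimates on each $T\in\mathcal{T}_h$ for the polynomials $\nabla y_h$ and $\Delta_h y_h$, one obtains the shape-regularity-controlled bounds
\[
\|h^{1/2}\{\partial_\mu\nabla y_h\}\|_{L^2(\Gamma_h)} \lesssim \|D_h^2 y_h\|_{L^2(\Omega)},
\qquad
\|h^{3/2}\{\partial_\mu\Delta_h y_h\}\|_{L^2(\Gamma_h)} \lesssim \|D_h^2 y_h\|_{L^2(\Omega)}.
\]
Applying Cauchy--Schwarz and Young's inequality with parameters $\eta_1,\eta_2>0$ to the two consistency integrals, and choosing $\eta_1,\eta_2$ small while taking $\gamma_0,\gamma_1$ large enough so that the jump contributions on the right are dominated by the penalty terms and the $(\eta_1^{-1}+\eta_2^{-1})$-scaled Hessian contribution is absorbed into $\|D_h^2 y_h\|^2$, I would conclude
\[
a_h(y_h,y_h) \ge c_0\, \|y_h\|_{H_h^2(\Omega)}^2
\]
with $c_0>0$ independent of $h$.

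Next, to control $(f,y_h)_{L^2(\Omega)}$ I would invoke the discrete Friedrichs estimate \eqref{e:H2typePoincare}, yielding $\|y_h\|_{L^2(\Omega)}\le C_F(\|y_h\|_{H_h^2(\Omega)}+\|g\|_{H^1}+\|\Phi\|_{H^1})$. Together with Young's inequality with parameter $\rho>0$ this gives
\[
(f,y_h)_{L^2(\Omega)} \le \tfrac{C_F}{2\rho}\|f\|_{L^2(\Omega)}^2 + \tfrac{\rho}{2}C_F\bigl(\|y_h\|_{H_h^2(\Omega)}^2 + \|g\|_{H^1}^2 + \|\Phi\|_{H^1}^2\bigr).
\]
Substituting these two estimates into $E_h[y_h]=\tfrac12 a_h(y_h,y_h)-(f,y_h)_{L^2(\Omega)}$ and choosing $\rho$ sufficiently small relative to $c_0$ to absorb $\|y_h\|_{H_h^2}^2$ on the right-hand side yields the claim with $C_F^*$ proportional to $C_F/c_0$.

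The main obstacle is the simultaneous calibration of $\gamma_0,\gamma_1$: one needs the constants in the trace-inverse estimates for $\{\partial_\mu\nabla y_h\}$ and $\{\partial_\mu\Delta_h y_h\}$ to be uniform across shape-regular triangulations and across (possibly graded) quadrilateral meshes with hanging nodes, so that a single threshold for $\gamma_0,\gamma_1$ works for the whole family $\{\mathcal{T}_h\}_{h>0}$. The quadrilateral case is delicate because $D^2 F_T\neq\boldsymbol{0}$ in general, and the physical-coordinate Hessian of $v_h$ involves lower-order terms; the uniform inverse estimate is precisely the point addressed in the appendix, Section \ref{S:A-quads}, and its availability is what allows the absorption above to be carried out with constants independent of $h$.
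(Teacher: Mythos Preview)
Your proposal is correct and follows essentially the same route as the paper: bound the two consistency terms via the inverse/trace estimate \eqref{E:inverse} and Young's inequality, absorb the Hessian contribution by taking the Young parameter small and the jump contributions by taking $\gamma_0,\gamma_1$ large, then handle $(f,y_h)_{L^2(\Omega)}$ via the Friedrichs-type bound \eqref{e:H2typePoincare} and a second Young argument. One small slip: in your absorption sentence the roles of $\eta_i$ and $\eta_i^{-1}$ are swapped---it is the $(\eta_1+\eta_2)$-scaled Hessian term that gets absorbed (by choosing $\eta_i$ small), while the $(\eta_1^{-1}+\eta_2^{-1})$-scaled jump terms are what force $\gamma_0,\gamma_1$ to be large.
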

\begin{proof}
We examine the various terms in $a_h(y_h,y_h)$ defined in \eqref{E:def-ah} with $y_h\in\Vhk(g,\Phi)$. We start with $\big(\{ \nd \nabla_h y_h\}, \left[ \nabla_h y_h \right]\big)_{\LtG}$, recall the definition \eqref{averages} of averages, and use the inverse estimate \eqref{E:inverse} for isoparametric elements to obtain
\[
\|\{ \nd \nabla_h y_h\}\|_{L^2(e)} \le \|\{D^2_h y_h\}\|_{L^2(e)}
\lesssim h_e^{-1/2} \|D^2_h y_h\|_{L^2(\omega(e))}
\]
for all $e\in\calE_h$, where $\omega(e)$ is either the union of two elements containing $e$ for $e\in\Ei$ or a single element for $e\in\Eb$. Consequently, Young's inequality yields for any $\rho>0$
	\begin{equation*}
	\begin{aligned}
	  \Big|\big(\{ \nd \nabla_h y_h\}, \left[ \nabla_h y_h \right]\big)_{\LtG} \Big|
          \le \rho \| D^2_h y_h  \|_{L^2(\Omega)}^2
          + \frac{C_1}{\rho} \| h^{-1/2} \left[ \nabla_h y_h \right] \|_{\LtG}^2.
	\end{aligned}
	\end{equation*}
We next consider $\big(\{ \nd \Delta_h y_h\}, \left[ y_h \right]\big)_{\LtG}$. The inverse inequality \eqref{E:inverse} again implies
$$
	\begin{aligned}
	\|\{ \nd \Delta_h y_h\}\|_{L^2(e)}
        \le \|\{ D_h^3 y_h\}\|_{L^2(e)}
	\lesssim h_e^{-3/2}\|D_h^2 y_h\|_{L^2(\omega(e))}
        \quad\forall \, e\in\calE_h,
	\end{aligned}
$$
	whence Young's inequality gives
	\begin{align*}
	\Big|\big(\{ \nd \Delta_h y_h\}, \left[ y_h \right]\big)_{\LtG} \Big|
	\lesssim \rho \|D^2_h y_h\|_{L^2(\Omega)}^2
        + \frac{C_0}{\rho} \|h^{-3/2}\left[ y_h \right]\|_{\LtG}^2.
	\end{align*}
        In light of definition \eqref{E:energynormN} of $\|\cdot\|_{H^2_h(\Omega)}$, we choose $\rho=1/8$ and $\gamma_1,\gamma_0$ sufficiently large, depending on the geometric constants $C_1,C_0$, to absorb the above terms into $\frac12 \|D_h^2 y_h\|_{L^2(\Omega)}^2$, $\frac{\gamma_1}{2} \|h^{-1/2}[\nabla_h y_h]\|_{L^2(\Gamma_h)}^2$ and $\frac{\gamma_0}{2} \|h^{-3/2}[y_h]\|_{L^2(\Gamma_h)}^2$ of $a_h(y_h,y_h)$. We thus deduce coercivity of $a_h$ and, according to \eqref{E:DEn}, we find
        \[
        \|y_h\|_{H_h^2(\Omega)}^2 \lesssim \frac12 a_h(y_h,y_h)
        = E_h[y_h] + \big(f,y_h\big)_{L^2(\Omega)}.
        \]
        We apply the Friedrichs-type estimate \eqref{e:H2typePoincare} to deal with the forcing term, namely
	\begin{align*}
	  \Big|\int_{\Omega} f \cdot y_h \Big| \le \|f\|_\lom \|y_h\|_\lom  \le C_F
          \|f\|_\lom \Big(\|y_h\|_{H_h^2(\Omega)} +  \|g\|_{\hom} + \|\Phi\|_{\hom} \Big).
	\end{align*}
        We finally see that $\|y_h\|_{H_h^2(\Omega)}$ can be hidden on the left-hand side of the previous coercivity estimate via Young's inequality, and conclude the proof.
\end{proof}

Lemma \ref{L:Coercivity} and its proof yield the following bounds on $E_h[y_h]$
\begin{equation}\label{E:lower-bound}
  - C_F^* R(g,\Phi,f) \le E_h[y_h] \lesssim \|y_h\|_{H_h^2(\Omega)}^2 + R(g,\Phi,f).
\end{equation}
%
  
\section{Discrete Gradient Flow} \label{S:GFlow}

We now design a discrete $H^2$-gradient flow to construct discrete minimizers.
A key aspect of this flow is to guarantee the discrete isometry defect \eqref{E:DIC}, namely
  \[
   D_h[y_h] = \sum_{T \in \mathcal{T}_h } \left| \int_T (\nabla_h v_h)^T \nabla_h v_h  - I   \right| \le \eps.
  \]
We start with the first variation of the isometry constraint \eqref{E:isometry} evaluated at $y\in\mbV(g,\Phi)$ in the direction $v$, the so-called {\it linearized isometry constraint}, which reads
}
\begin{equation}\label{E:linear-iso}
\rhn{L[y;v]} = (\nabla v)^T \nabla y + (\nabla y)^T \nabla v = 0
\quad\forall \, v\in \mbV(0,\bz).
\end{equation}
This serves to describe the tangent manifold to \eqref{E:isometry} at $y\in\mbV(g,\Phi)$
\[
\mathcal{F}[y] := \big\lbrace v \in \mbV(0,\bz): \quad \rhn{L[y;v]=0} \big\rbrace.
\]
\rhn{In view of \eqref{E:linear-iso}, let} the {\it discrete linearized isometry constraint} at $y_h\in\Vhk(g,\Phi)$ be
\begin{equation}\label{E:LinDIC}
\rhn{L_T[y_h;v_h]} := \int_T  (\nabla_h v_h)^T \nabla_h y_h 
+ (\nabla_h y_h)^T \nabla_h v_h =0
\quad\forall \, v_h\in \Vhk(0,\bz),
\end{equation}
which imposes the pointwise equality \eqref{E:linear-iso} on average
over each element $T\in\mathcal{T}_h$. This
in turn leads to the subspace $\mathcal{F}_{h}[y_h]$ of $\Vhk(0,\bz)$ defined as
$$
\mathcal{F}_{h}[y_h]
:= \left\lbrace v_h \in \Vhk(0,\bz) : \quad  \rhn{L_T[y_h;v_h]} =0 \quad \forall \,
T \in \mathcal{T}_h\right\rbrace.
$$

We are now in a position to describe the {\it discrete $H^2$-gradient flow}. Let
$y_h^0\in \Vhk(g,\Phi)$ be a suitable initial guess with energy $E_h[y_h^0]$ as
small as possible and isometry defect $D_h[y_h^0]\le \tau$, where $\tau>0$ is a
fictituous time-step to be determined later. Given an iterate $y_h^n\in \Vhk(g,\Phi)$
for $n\ge0$, we seek to minimize the functional
\[
\mathcal{F}_{h}[y_h^n] \ni w_h \mapsto \frac{1}{2\dt} \|w_h\|_{H_h^2(\Omega)}^2
+ E_h[y_h^n+w_h],
\]
where the discrete $H^2$-norm $\|w_h\|_{H_h^2(\Omega)}$ is defined in \eqref{E:energynormN}.
We thus minimize $E_h[y_h^n+w_h]$ but penalizing the deviation of $w_h$ from zero in
the norm $\|w_h\|_{H_h^2(\Omega)}$. If $\delta y_h^{n+1}\in\mathcal{F}_{h}[y_h^n]$ is a minimizer, then it satisfies the optimality condition
\begin{equation}\label{E:GFlow-energy}
  \tau^{-1} \big(\delta y_h^{n+1},v_h\big)_{H_h^2(\Omega)}
  + \delta E_h[y_h^n+\delta y_h^{n+1};v_h] = 0
  \quad \, \forall v_h\in\mathcal{F}_{h}[y_h^n],
\end{equation}
where $\delta E_h[y_h^{n+1};v_h]$ is the variational derivative
of $E_h$ at \rhn{$y_h^{n+1}:=y_h^n + \delta y_h^{n+1}$} in the direction of $v_h$.
In view of \eqref{E:bilinear} and \eqref{E:DEn},
the Euler-Lagrange equation \eqref{E:GFlow-energy} is equivalent to \looseness=-1
\begin{equation} \label{E:GFlow}
  \begin{aligned}
    \dt^{-1} (\delta y_h^{n+1} &,v_h)_{H_h^2(\Omega)} +  a_h(\delta y_h^{n+1},v_h)
    \\ &=
  \rhn{-} a_h(y_h^n,v_h) + (f, v_h)_{L^2(\Omega)} \rhn{=: F^n(y_h^n;v_h)} \quad\forall v_h \in \mathcal{F}_{h}[y_h^n].
  \end{aligned}
\end{equation}
 
\begin{remark}[solvability of \eqref{E:GFlow}]\label{R:solvability}
Note that problem \eqref{E:GFlow} is linear in $\delta y_h^{n+1}$
and Lemma~\ref{L:Coercivity} (coercivity of $E_h$) with vanishing data $(g,\Phi,f)$
yields coercivity of $a_h$ \looseness=-1
\begin{equation}\label{e:coercivity_a}
  \quad a_h(v_h,v_h) \geq \alpha \rhn{\| v_h \|^2_{H^2_h(\Omega)}}
  \quad\forall \, v_h \in \Vhk(0,\bz)
\end{equation}
for some constant $\alpha$ independent of $h$.
Since $0 \in \mathcal{F}_{h}[y_h^n]$ we infer that 
$ \mathcal{F}_{h}[y_h^n] \not = \emptyset$ and 
the Lax-Milgram theorem implies existence and uniqueness of a solution 
$\delta y_h^{n+1}\in\mathcal{F}_{h}[y_h^n]$ to each step of the discrete
gradient flow \eqref{E:GFlow-energy}.
\end{remark}

\begin{remark}[saddle point formulation]\label{R:saddle}
\rhn{
  Instead of solving \eqref{E:GFlow} within the manifold $\mathcal{F}_{h}[y_h^n] \subset
  \Vhk(0,\bz)$ we could pose the problem in the entire space $\Vhk(0,\bz)$ provided we
  append the constraint $\delta y_h^{n+1}, v_h \in \mathcal{F}_{h}[y_h^n]$ via a Lagrange
  multiplier. To this end, consider the bilinear form
  $\ell_h(y_h^n;\cdot,\cdot):[\Vhk]^3\times[\mathbb{V}_h^0]^{2\times2}$ given by
  \begin{equation}\label{E:ell}
  \ell_h (y_h^n; v_h,\mu_h) := \sum_{T\in\mathcal{T}_h}
  \int_T \mu_n : \Big( (\nabla_h y_h^n)^T \nabla_h v_h + (\nabla_h v_h)^T \nabla_h y_h^n \Big).
  \end{equation}
  The saddle point system equivalent to \eqref{E:GFlow} reads: find
  $(\delta y_h^{n+1},\lambda_h^{n+1}) \in[\Vhk]^3\times[\mathbb{V}_h^0]^{2\times2}$
  such that for all $(v_h,\mu_h)\in[\Vhk]^3\times[\mathbb{V}_h^0]^{2\times2}$ there holds
  \begin{equation}\label{E:saddle}
    \begin{aligned}
     \dt^{-1} (\delta y_h^{n+1},v_h)_{H_h^2(\Omega)} +  a_h(\delta y_h^{n+1},v_h)
     + \ell_h(y_h^n;v_h,\lambda_h^{n+1}) &= F^n(y_h^n;v_h)
     \\
     \ell_h(y_h^n;\delta y_h^{n+1},\mu_h) &= 0.
    \end{aligned}
  \end{equation}
  It is an open question whether a uniform inf-sup condition is
  valid for \eqref{E:saddle}.
  We will get back to \eqref{E:saddle} in Section \ref{S:Implementation} as a
  practical procedure to find $\delta y_h^{n+1}$.
}
\end{remark}

We now show that the discrete $H^2$-gradient flow \eqref{E:GFlow-energy}
reduces the energy $E_h$.

\begin{Lemma} [energy decay] \label{L:energydecrease}
Let $y_h^n\in\Vhk(g,\Phi)$ be the $n$-th iterate of the discrete $H^2$-gradient flow \eqref{E:GFlow-energy}
with data $(g,\Phi,f)$ obeying \eqref{E:data}. If $\delta y_h^{n+1}\ne 0$ is the solution of \eqref{E:GFlow-energy}, then \rhn{the next iterate
$y_h^{n+1} = y_h^n + \delta y_h^{n+1}$} satisfies
$$
	E_h[y_h^{n+1}] < E_h[y_h^n].
$$
Moreover, if $\alpha$ is the coercivity constant in \eqref{e:coercivity_a} and
$\tau$ is the time step, then
\begin{equation}\label{E:en_decay}
  \left( \frac{\alpha}{2} + \frac{1}{ \dt}	\right)
  \sum_{n=0}^{N-1} \|\delta y_h^{n+1}\|_{H_h^2(\Omega)}^2
	+ E_h[y_h^{N}] 
	\leq E_h[y_h^0].
\end{equation}
\end{Lemma}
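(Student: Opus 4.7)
The plan is to exploit the quadratic structure of $E_h$ coming from $E_h[y_h]=\tfrac12 a_h(y_h,y_h)-(f,y_h)_{L^2(\Omega)}$. Since $a_h$ is bilinear and $(f,\cdot)_{L^2(\Omega)}$ is linear, an exact Taylor expansion around $y_h^n$ gives
\begin{equation*}
E_h[y_h^{n+1}] - E_h[y_h^n] = \delta E_h[y_h^n;\delta y_h^{n+1}] + \tfrac12 \, a_h(\delta y_h^{n+1},\delta y_h^{n+1}),
\end{equation*}
with $\delta E_h[y_h^n;v_h] = a_h(y_h^n,v_h)-(f,v_h)_{L^2(\Omega)}$. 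This identity is the cornerstone of the argument.

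Next, I would test the Euler--Lagrange equation \eqref{E:GFlow-energy} against $v_h = \delta y_h^{n+1}$, which is a legitimate choice because $\delta y_h^{n+1}\in\mathcal{F}_h[y_h^n]$ by construction. Writing $\delta E_h[y_h^{n+1};\delta y_h^{n+1}] = \delta E_h[y_h^n;\delta y_h^{n+1}] + a_h(\delta y_h^{n+1},\delta y_h^{n+1})$, this yields
\begin{equation*}
\tau^{-1}\,\|\delta y_h^{n+1}\|_{H_h^2(\Omega)}^2 = -\delta E_h[y_h^n;\delta y_h^{n+1}] - a_h(\delta y_h^{n+1},\delta y_h^{n+1}).
\end{equation*}
Substituting this expression for $\delta E_h[y_h^n;\delta y_h^{n+1}]$ into the Taylor expansion above gives
\begin{equation*}
E_h[y_h^{n+1}] - E_h[y_h^n] = -\tau^{-1}\,\|\delta y_h^{n+1}\|_{H_h^2(\Omega)}^2 - \tfrac12\, a_h(\delta y_h^{n+1},\delta y_h^{n+1}).
\end{equation*}

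To conclude, I would invoke the coercivity estimate \eqref{e:coercivity_a} (which is a direct consequence of Lemma \ref{L:Coercivity} applied with vanishing data, as pointed out in Remark \ref{R:solvability}) to bound the second term from above by $-\tfrac{\alpha}{2}\|\delta y_h^{n+1}\|_{H_h^2(\Omega)}^2$. This yields
\begin{equation*}
E_h[y_h^{n+1}] - E_h[y_h^n] \le -\bigl(\tau^{-1} + \tfrac{\alpha}{2}\bigr)\|\delta y_h^{n+1}\|_{H_h^2(\Omega)}^2,
\end{equation*}
which is strictly negative whenever $\delta y_h^{n+1}\neq 0$, giving the strict decrease $E_h[y_h^{n+1}]<E_h[y_h^n]$. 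Summing (telescoping) this inequality from $n=0$ to $n=N-1$ produces \eqref{E:en_decay}.

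I do not anticipate a genuine obstacle: the argument is essentially a standard energy-descent identity for a quadratic functional under a linear side constraint, and the two crucial ingredients (the admissibility of $\delta y_h^{n+1}$ as a test function in its own Euler--Lagrange equation, and the coercivity \eqref{e:coercivity_a}) are already in place. The only subtlety worth highlighting is that the quadratic part $\tfrac12 a_h(\delta y_h^{n+1},\delta y_h^{n+1})$ of the expansion, which might in principle threaten monotonicity, is absorbed thanks to the negative sign produced by substituting from the optimality condition and is then further controlled via coercivity rather than being left to chance.
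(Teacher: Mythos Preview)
Your proof is correct and follows essentially the same approach as the paper: test \eqref{E:GFlow-energy} with $v_h=\delta y_h^{n+1}$, exploit the exact quadratic expansion of $E_h$, apply the coercivity \eqref{e:coercivity_a}, and telescope. The only cosmetic difference is that the paper writes the quadratic identity in the form $\delta E_h[y_h^{n+1};\delta y_h^{n+1}] = E_h[y_h^{n+1}]-E_h[y_h^n]+\tfrac12 a_h(\delta y_h^{n+1},\delta y_h^{n+1})$ and substitutes directly, whereas you expand around $y_h^n$ first and then use $\delta E_h[y_h^{n+1};\cdot]=\delta E_h[y_h^n;\cdot]+a_h(\delta y_h^{n+1},\cdot)$; the algebra is identical.
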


\begin{proof}
We set $v_h= \delta y_h^{n+1}$ in \eqref{E:GFlow-energy} and use the fact that
$E_h$ is quadratic to obtain
\[
\rhn{\delta E_h[y_h^n + \delta y_h^{n+1}, \delta y_h^{n+1}]
= E_h[y_h^{n+1}] - E_h[y_h^n]} + \frac{1}{2} a_h(\delta y_h^{n+1},\delta y_h^{n+1}).
\]
Invoking the coercivity property \eqref{e:coercivity_a} we thus get
\[
\Big(\frac{\alpha}{2} + \frac{1}{\tau}  \Big)
\rhn{\| \delta y_h^{n+1} \|_{H_h^2(\Omega)}^2 + E_h[y_h^{n+1}] \le E_h[y_h^n],}
\]
whence \rhn{$E_h[y_h^{n+1}] < E_h[y_h^n]$} if $\delta y_h^{n+1}\ne0$.
We finally sum over n to deduce \eqref{E:en_decay}.
\end{proof}

We now show that the discrete $H^2$-gradient flow guarantees the discrete
isometry defect \eqref{E:DIC} for any $\eps>0$ provided the time step
$\dt$ \rhn{in \eqref{E:GFlow-energy}} is suitably chosen. \rhn{This is due
to the presence of the {\it dissipative} leftmost term in \eqref{E:en_decay}.}

\begin{Lemma} [discrete isometry defect] \label{L:GFlowDIC}
Let the initial guess $y_h^0\in\Vhk(g,\Phi)$ for \eqref{E:GFlow} be an
approximate isometry in the sense that
\rhn{
\begin{equation}\label{E:approx-isometry}
  D_h[y_h^0] = \sumT \Big| \int_T  (\nabla_h  y_h^0)^T  \nabla_h y_h^0  - I \Big|
  \le \tau,
\end{equation}
let $R(g,\Phi,f)$ and $C_F^*$ be given in Lemma \ref{L:Coercivity} (coercivity of $E_h$), 
\rhn{let $C_F$ be the Friedrichs constant in \eqref{e:H2typePoincare},} and set}
\begin{equation}\label{E:delta0}
\rhn{\eps_0 := \Big(1 + C_F^2 \big( E_h[y_h^0] + C_F^* R(g,\Phi,f) \big) \Big) \tau,}
\end{equation}
Then every iterate $y_h^n$ of \eqref{E:GFlow-energy} for $n\ge1$
satisfies the discrete isometry defect \eqref{E:DIC}
\begin{equation}\label{e:Dh}
  D_h[y_h^n] = \sumT\left|  \int_T (\nabla_h y_h^{n})^T \nabla_h y_h^{n} - I
  \right| \le \eps \quad \rhn{\forall \, \eps \ge \eps_0.}
	\end{equation}
\end{Lemma}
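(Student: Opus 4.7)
The plan is to prove \eqref{e:Dh} by induction on $n$, expressing the change in the isometry defect from one iterate to the next in terms of $\|\nabla_h \delta y_h^{n+1}\|_{L^2(\Omega)}^2$ and then summing using the energy decay \eqref{E:en_decay}.

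First I would exploit the splitting $y_h^{n+1} = y_h^n + \delta y_h^{n+1}$. Expanding the product and integrating over each $T\in\mathcal{T}_h$ gives
\begin{equation*}
\int_T (\nabla_h y_h^{n+1})^T \nabla_h y_h^{n+1} - I = \int_T (\nabla_h y_h^n)^T \nabla_h y_h^n - I + \int_T L_T[y_h^n;\delta y_h^{n+1}] + \int_T (\nabla_h \delta y_h^{n+1})^T \nabla_h \delta y_h^{n+1},
\end{equation*}
where the crucial cancellation comes from the fact that $\delta y_h^{n+1}\in\mathcal{F}_h[y_h^n]$, so the middle term vanishes by \eqref{E:LinDIC}. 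Taking Frobenius norms, using the triangle inequality and $\big|\int_T (\nabla_h \delta y_h^{n+1})^T \nabla_h \delta y_h^{n+1}\big| \le \|\nabla_h \delta y_h^{n+1}\|_{L^2(T)}^2$, and summing over $T$ yields the telescoping estimate
\begin{equation*}
D_h[y_h^{n+1}] \le D_h[y_h^n] + \|\nabla_h \delta y_h^{n+1}\|_{L^2(\Omega)}^2.
\end{equation*}

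Iterating from $n=0$ to $n=N-1$ and using the hypothesis $D_h[y_h^0]\le\tau$, I would then obtain
\begin{equation*}
D_h[y_h^N] \le \tau + \sum_{n=0}^{N-1} \|\nabla_h \delta y_h^{n+1}\|_{L^2(\Omega)}^2.
\end{equation*}
To control the sum, I apply the Friedrichs-type inequality \eqref{e:H2typePoincare} to $\delta y_h^{n+1}\in\Vhk(0,\bz)$ (so the boundary data vanish), which gives $\|\nabla_h \delta y_h^{n+1}\|_{L^2(\Omega)} \le C_F \|\delta y_h^{n+1}\|_{H_h^2(\Omega)}$, and then invoke the energy decay \eqref{E:en_decay} together with the lower bound $E_h[y_h^N]\ge - C_F^* R(g,\Phi,f)$ from \eqref{E:lower-bound}. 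Since $\frac{\alpha}{2}+\frac{1}{\tau}\ge\frac{1}{\tau}$, this produces
\begin{equation*}
\sum_{n=0}^{N-1}\|\delta y_h^{n+1}\|_{H_h^2(\Omega)}^2 \le \tau \bigl(E_h[y_h^0] + C_F^* R(g,\Phi,f)\bigr).
\end{equation*}

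Combining the last three displays gives exactly $D_h[y_h^N]\le \eps_0$, which matches the definition \eqref{E:delta0}, and the conclusion \eqref{e:Dh} follows for any $\eps\ge\eps_0$. The main conceptual step is the cancellation argument in the first display: it is what makes the linearized isometry constraint \eqref{E:LinDIC} pay off, converting a linear-order error into a quadratic one that is controlled by the dissipative term in the energy decay. Once that observation is in place, the remainder of the argument is a straightforward combination of the Friedrichs inequality \eqref{e:H2typePoincare}, the coercivity bound \eqref{E:lower-bound}, and the summed energy decay \eqref{E:en_decay}.
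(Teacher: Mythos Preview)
Your proof is correct and follows essentially the same approach as the paper: expand $(\nabla_h y_h^{n+1})^T\nabla_h y_h^{n+1}$, use the linearized constraint \eqref{E:LinDIC} to kill the cross terms, telescope, and then control $\sum_n\|\nabla_h\delta y_h^{n+1}\|_{L^2(\Omega)}^2$ via \eqref{e:H2typePoincare}, \eqref{E:en_decay}, and \eqref{E:lower-bound}. One small notational slip: $L_T[y_h^n;\delta y_h^{n+1}]$ already includes the integral over $T$ by definition \eqref{E:LinDIC}, so the extra $\int_T$ in your first display should be dropped.
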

\begin{proof}
  We start by quantifying the increase of the discrete isometry defect in
  each iteration of the discrete gradient flow. \rhn{Combining $y_h^{n+1}=y_h^n+\delta y_h^{n+1}$ with the property $L_T[y_h^n;\delta y_h^{n+1}]=0$, which is \eqref{E:LinDIC} for $v_h=\delta y_h^{n+1}$, we get the identity}
\begin{equation*}
	\int_T (\nabla_h y_h^{n+1})^T \nabla_h y_h^{n+1}
	= \int_T \left(\nabla_h y_h^{n}\right)^T\nabla_hy_h^{n} + \int_T\left(\nabla_h\delta y_h^{n+1}\right)^T\nabla_h \delta y_h^{n+1}.
\end{equation*}
Summing for $n=0$ to $n=N-1$, and exploiting telescopic cancellation,
we obtain
\[
\int_T (\nabla_h y_h^N)^T \nabla_h y_h^N =
\int_T (\nabla_h y_h^0)^T \nabla_h y_h^0 +
\sum_{n=0}^{N-1} \|\nabla_h \delta y_h^{n+1}\|_{L^2(T)}^2.
\]
Consequently, adding over $T\in\mathcal{T}_h$ and employing \eqref{E:approx-isometry},
we deduce
$$
D_h[y_h^N] = \sumT\left| \int_T  (\nabla_h y_h^{N})^T \nabla_h y_h^{N} - I
\right| \leq \tau + \sum_{n=0}^{N-1} \|\nabla_h \delta y_h^{n+1}\|_{L^2(\Omega)}^2.
$$
We finally combine the Friedrichs-type inequality \eqref{e:H2typePoincare} for \rhn{$\delta y_h^{n+1}\in\Vhk(0,\bz)$ (i.e. corresponding to Dirichlet boundary data $g=0$ and $\Phi=\bz$) with the energy decay relation \eqref{E:en_decay} and the lower bound in \eqref{E:lower-bound} to get}
\[
\sum_{n=0}^{N-1} \|\nabla_h \delta y_h^{n+1}\|_{L^2(\Omega)}^2 \le
\rhn{C_F^2 \sum_{n=0}^{N-1} \| \delta y_h^{n+1} \|_{H_h^2(\Omega)}^2 < C_F^2 \Big(E_h[y_h^0]
+ C_F^* R(g,\Phi,f) \Big)\tau.}
\]
Inserting this in the bound for $D_h[y_h^N]$ yields the asserted estimate
\eqref{e:Dh}.
\end{proof}

\begin{remark} [choice of initial guess $y_h^0$]
  We realize from \rhn{Lemma \ref{L:GFlowDIC}} (discrete isometry defect) that choosing
  $y_h^0$ might be tricky, unless we can make the discrete isometry defect
  $D_h[y_h^0]$ small without increasing much the discrete energy $E_h[y_h^0]$
  \rhn{due to the boundary penalty terms.}
We will revisit this issue in Section \ref{S:NumExPl}.
\end{remark}

\section{Discrete Hessian} \label{S:DiscreteHessian}

In this section we provide a suitable definition of discrete Hessian \rhn{$H_h[y_h]\in [L^2(\Omega)]^{3\times 2\times 2}$ for $y_h \in [\Vhk]^3$.} Central to this concept is the following question: if the discontinuous function $y_h$ converges strongly in $[L^2(\Omega)]^3$ to $y\in [H^2(\Omega)]^3$, under what conditions could $H_h[y_h]$ converge weakly in $[L^2(\Omega)]^{3 \times 2 \times 2}$ to $D^2y$, namely
\begin{equation}\label{E:weak-convergence}
H_h[y_h] \rightharpoonup D^2y \quad \text{in} \quad  [\lom]^{3 \times 2 \times 2} \, ?
\end{equation}
It is apparent that the information contained in the broken \rhn{Hessian $D^2_hy_h\in [L^2(\Omega)]^{3\times2\times2}$} is insufficient for this purpose, and that the jumps $[y_h]\in [L^2(\Gamma_h)]^3$ and $[\nabla_hy_h]\in[L^2(\Gamma_h)]^{3\times 2}$ do not provide directly the missing information because they are singular with respect to the Lebesgue measure. To bridge this gap, in Section \ref{S:Hessian-def} we introduce lifting operators of $[y_h]$ and $[\nabla_hy_h]$ \cite{} and use them to construct $H_h[y_h]$. Another important property of $H_h[y_h]$ critical for the lim-inf argument is
\begin{equation} \label{E:HhIneq}
\frac{1}{2} \|H_h[y_h]\|_\lom^2 - (f_h,y_h)_\lom \leq E_h[y_h],
\end{equation}
\rhn{with constant $1$ on the right-hand side.}
We will prove \eqref{E:weak-convergence} and
\eqref{E:HhIneq} in Section \ref{S:PropertiesHh}.

\rhn{
Our approach is similar in spirit to that developed by Pryer\cite{Pryer}, who in turn is inspired by Di Pietro and Ern\cite{DiPietroErn} for second order problems. However, an important distinction is worth pointing out.
The finite element spaces considered in \cite{DiPietroErn,Pryer} consist of polynomial functions (of total degree $\leq k$) on the physical elements $T\in\mathcal{T}_h$.
Instead, the dG method proposed in this work uses polynomial functions on the reference element $\widehat{T}$ \rhn{(see \eqref{E:discrete-space}), which is consistent with the implementation within the deal.ii library\cite{dealii85,dealiipaper} responsible for the numerical experiments of Section~\ref{S:NumExPl}. Consequently, several challenges arise when dealing with isoparametric elements (in particular quadrilaterals elements): (i) the mappings $F_T:\widehat T \rightarrow T$ between $\widehat{T}$ and $T\in\mathcal{T}_h$ are non-affine and do not produce polynomial functions on $T$;} (ii)  $\nabla_h \Vhk$ does not embed in $\lbrack \mathbb V_h^{k'}\rbrack^2$ for some $k'$ as required, for example, for the analyses in \cite{DiPietroErn,Brezzi,Cockburn}; (iii) the Hessians of $D^2 v_h|_T$ and $D^2 (v_h\circ F_T)$ are not proportional because $D^2 F_T \ne \boldsymbol{0}$.
As a consequence, the space of Hessians
\begin{equation}\label{e:hessian_space}
\mathbb H_h^k:= \{ \tau_h \in  L^2(\Omega)^{2\times 2} \ : \ \tau_h|_{T} = D^2_h w_h, \quad w_h \in  \Vhk(T), \ T\in \mathcal T_h\},
\end{equation}
where the reconstructed Hessian $H_h[v_h]$ of a function $v_h\in\Vhk$ belongs to, is not a subspace of $\lbrack\mathbb V_h^{k}\rbrack^{2\times 2}$ in general; $\tau_h\in\mathbb H_h^k$ might not be piecewise polynomial.
}

\subsection{Lifting operators and definition of $H_h[y_h]$}\label{S:Hessian-def}
\rhn{Given a scalar-valued function $v_h\in\Vhk$,} we define two lifting operators $R_h([\nabla_h v_h])$ and $B_h([v_h])$ that extend the jumps $[\nabla_h v_h]$ and $[v_h]$ of \rhn{$\nabla_h v_h$ and $v_h$ from the full} skeleton $\Gamma_h$ to the bulk $\Omega$, \rhn{as in Brezzi et al.\cite{Brezzi}.} It turns out that of the many ways to achieve this, there is only one that leads to \eqref{E:weak-convergence} and \eqref{E:HhIneq}. We describe this next.

\rhn{Given an edge $e \in \calE_h$, let $\omega(e)$ be the patch associated with $e$ (i.e. the union of two elements sharing $e$ for interior edges $e\in\Ei$ or just one single element for boundary edges $e\in\Eb$), and let $\Hhk(\omega(e))$ stand for the restriction of functions in $\Hhk$ to $\omega(e)$. The first lifting operator
$R_h := \sum_{e \in \calE_h} r_e:\rhn{[L^2(\Gamma_h)]^2} \to \mathbb H_h^k$ hinges on the
local liftings $r_e: [L^2(e)]^2 \to \mathbb H_h^k$ which, for all $e \in \calE_h$ and $\phi \in [L^2(e)]^2$, are defined by}
\begin{equation}\label{E:lift1}
\rhn{r_e(\phi)\in \Hhk(\omega(e)):} \
\int_{\omega(e)} r_e(\phi) : \tau_h = 
\int_e  \phi\cdot \left\{\tau_h \right\}  \mu_e
\quad\forall \, \tau_h \in \rhn{\Hhk(\omega(e)),}
\end{equation}
and vanish outside $\omega(e)$. \rhn{Notice that upon taking $\tau_h = D_h^2 w_h$, we get}
\begin{equation}\label{E:lift-1}
\big(R_h([\nabla_h v_h]), \rhn{D_h^2 w_h} \big)_{L^2(\Omega)} 
 = \big([\nabla_h v_h], \left\{  \nd(\nabla_h w_h)  \right\} \big)_{L^2(\Gamma_h)}
\quad\forall \, w_h\in\Vhk.
\end{equation}

The second lifting operator
$B_h := \sum_{e\in \calE_h} b_e:L^2(e)\to \mathbb H_h^k$
relies on the local liftings $b_e:L^2(e)\to \mathbb H_h^k$ which,
for all $e\in\calE_h$ and $\phi\in L^2(e)$, are given by
\begin{equation}\label{E:lift2}
\rhn{b_e(\phi) \in \Hhk(\omega(e)):} \  
\int_{\omega(e)} b_e(\phi) : \tau_h = 
\int_e  \phi \ \left\{ {\rm div} \ \tau_h \right\} \cdot \mu_e 
\quad \forall \, \tau_h \in \rhn{\Hhk(\omega(e)),}
\end{equation}
and vanish outside $\omega(e)$. 
In this case, taking $\tau_h = D_h^2 w_h$ again and observing that
$
{\rm div} ( D_h^2 v_h ) = \nabla_h(\Delta_h v_h)
$
elementwise, we obtain
\begin{equation}\label{E:lift-2}
\big(B_h([v_h]), \rhn{D^2_h w_h}  \big)_{\rhn{L^2(\Omega)}} = \big([v_h],\left\{ \nd(\Delta_h w_h) \right\}\big)_{L^2(\Gamma_h)} \quad\forall\, w_h \in \Vhk.
\end{equation}

\rhn{To deal with vector-valued functions, we apply \eqref{E:lift1} and \eqref{E:lift2} componentwise.}

\begin{Definition}[discrete Hessian]\label{D:Hh}
We define $H_h: \Vhk(g,\Phi) \to \lbrack \mathbb H_h^k \rbrack^3$ to be
	\begin{equation} \label{E:Hh}
	\begin{aligned}
	  H_h[y_h]&:= D_h^2 y_h - R_h([\nabla_h y_h]) + B_h([y_h]),
	\end{aligned}
	\end{equation}
where \rhn{$R_h=\sum_{e\in\calE_h} r_e$ and $B_h=\sum_{e\in\calE_h} b_e$} are defined
in \eqref{E:lift1} and \eqref{E:lift2}.          
\end{Definition}

It is worth realizing that, in view of \eqref{E:lift-1} and \eqref{E:lift-2}, we
readily find
\begin{equation}\label{e:hessian_match}
\begin{aligned}
\!\! \big( H_h[y_h], \tau_h  \big)_{L^2(\Omega)} &= \big( D^2_h y_h, \rhn{D_h^2 w_h} \big)_{L^2(\Omega)}
\\
& - \big( [\nabla_h y_h],\left\{  \nd(\nabla_h w_h)  \right\} \big)_{L^2(\Gamma_h)}
+ \big([y_h], \rhn{\left\{ \nd(\Delta_h w_h) \right\}}\big)_{L^2(\Gamma_h)}
\end{aligned}
\end{equation}
for all \rhn{$\tau_h = D_h^2 w_h\in \lbrack \Hhk \rbrack^3$ and $y_h\in\Vhk(g,\Phi)$,} and that the boundary jumps  on $\Gamma_h^b$
are given by $[y_h]=y_h - g$ and $[\nabla_h y_h] = \nabla_h y_h - \Phi$, according to
\rhn{\eqref{E:bd-jumps} and \eqref{discrete-set}.
Relation \eqref{e:hessian_match} is a key tool in the analysis of our dG method and justifies using the Hessian space \eqref{e:hessian_space} for the definition of reconstructed Hessian $H_h[y_h]$. In contrast, Pryer \cite{Pryer} assumes that $y_h$ is piecewise polynomial of total degree $\leq k$ in $\Omega$ and $H_h[y_h]$ is piecewise polynomial of total degree $k-2$ in $\Omega$.}

\subsection{Properties of $H_h[y_h]$}\label{S:PropertiesHh}
We start with $L^2$ a priori bounds for $R_h, B_h$ and $H_h$.

\begin{Lemma}[$L^2$-bounds of lifting operators] \label{L:LiftBounds}
  Let $y_h\in\Vhk(g,\Phi)$ with data $(g,\Phi)$ satisfying \eqref{E:data}.
  Then, for all $e \in \Gamma_h$, there holds
$$
\| r_e([\nabla_h y_h])\|_{L^2(\omega(e))}
\lesssim \big\|h^{-1/2} [\nabla_h y_h]\big\|_{L^2(e)}
$$
	and
$$
\| b_e([y_h])\|_{L^2(\omega(e))} \lesssim 
\big\|h^{-3/2} [y_h]\big\|_{L^2(e)}. 	
$$
\end{Lemma}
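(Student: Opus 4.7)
The strategy is the standard duality/test-against-itself approach for lifting operators, combined with appropriate discrete trace and inverse inequalities on the finite-dimensional space $\Hhk(\omega(e))$.

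\textbf{Bound for $r_e$.} Testing \eqref{E:lift1} with $\tau_h = r_e(\phi) \in \Hhk(\omega(e))$ yields
\[
\|r_e(\phi)\|_{L^2(\omega(e))}^2 = \int_e \phi\cdot\{r_e(\phi)\}\,\mu_e
\le \|\phi\|_{L^2(e)}\,\|\{r_e(\phi)\}\|_{L^2(e)}.
\]
Since $\Hhk(\omega(e))$ has dimension bounded solely by shape regularity, a scaling argument (passing to the reference patch, using Lemma~\ref{L:error-curved-quad}-type estimates from Section~\ref{S:A-quads} to accommodate the isoparametric maps $F_T$) gives the discrete trace inequality
\[
\|\tau_h\|_{L^2(e)} \lesssim h_e^{-1/2}\|\tau_h\|_{L^2(\omega(e))}\qquad\forall\,\tau_h\in\Hhk(\omega(e)).
\]
Applying this to $\tau_h=r_e(\phi)$ and dividing by $\|r_e(\phi)\|_{L^2(\omega(e))}$ gives the first estimate with $\phi=[\nabla_h y_h]$.

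\textbf{Bound for $b_e$.} Testing \eqref{E:lift2} with $\tau_h = b_e(\phi)$ and applying Cauchy--Schwarz,
\[
\|b_e(\phi)\|_{L^2(\omega(e))}^2 = \int_e \phi\,\{\div b_e(\phi)\}\cdot\mu_e
\le \|\phi\|_{L^2(e)}\,\|\{\div b_e(\phi)\}\|_{L^2(e)}.
\]
Now two ingredients are needed: an inverse estimate $\|\div\tau_h\|_{L^2(T)} \lesssim h_T^{-1}\|\tau_h\|_{L^2(T)}$ and a trace estimate $\|\div\tau_h\|_{L^2(e)}\lesssim h_T^{-1/2}\|\div\tau_h\|_{L^2(T)}$, both valid on $\Hhk(\omega(e))$ by the same scaling argument. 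Composing them yields
\[
\|\{\div b_e(\phi)\}\|_{L^2(e)} \lesssim h_e^{-3/2}\|b_e(\phi)\|_{L^2(\omega(e))},
\]
and the second estimate follows upon choosing $\phi=[y_h]$.

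\textbf{Main obstacle.} The only non-routine point is justifying the discrete trace and inverse inequalities for $\Hhk(\omega(e))$, because the space $\Vhk$ is defined through the reference element via the isoparametric maps $F_T$ (so elements of $\Hhk(\omega(e))$ are not polynomials on the physical patch $\omega(e)$). One must verify that the dimension of $\Hhk(\omega(e))$ is uniformly bounded and that the change-of-variables estimates for $F_T$ (deferred to Section~\ref{S:A-quads}) preserve the expected scaling in $h_e$ under our shape-regularity and hanging-node assumptions. Once these are in place, the norm-equivalence on the reference patch together with scaling gives the required inequalities with constants depending only on shape regularity and polynomial degree $k$.
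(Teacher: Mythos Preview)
Your proposal is correct and follows essentially the same route as the paper: test the lifting against itself, apply Cauchy--Schwarz on the edge, and invoke a discrete trace/inverse inequality on $\Hhk(\omega(e))$ to absorb one factor back into the bulk norm. The only difference is cosmetic: the paper appeals directly to the ready-made inverse estimate \eqref{E:inverse} of Lemma~\ref{L:inverse} (which already accounts for the isoparametric maps and yields $\|D^2 v_h\|_{L^2(e)}\lesssim h_e^{-1/2}\|D^2 v_h\|_{L^2(T)}$ and $\|D^3 v_h\|_{L^2(e)}\lesssim h_e^{-3/2}\|D^2 v_h\|_{L^2(T)}$), whereas you phrase the same content as a generic scaling argument on the finite-dimensional space $\Hhk(\omega(e))$ and then flag the isoparametric issue separately.
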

\begin{proof}
We argue as in \cite{Brezzi} but with emphasis on boundary edges $e\in\Eb$
because they contain the Dirichlet data $(g,\Phi)$ \rhn{in view of \eqref{E:bd-jumps};}
see also \cite{BoNo,DiPietroErn,Pryer}. We prove the first bound because the other
one is identical. Definition \eqref{E:lift1} of $r_e$ yields
	$$
	\| r_e(\nabla_h y_h-\Phi)\|_{L^2(\omega(e))}^2 = \rhn{\int_e (\nabla_h y_h-\Phi) : \left\{ r_e(\nabla_h y_h-\Phi) \right\}  \mu_e.} 
	$$
\rhn{Since $\left\{ r_e(\nabla_h y_h-\Phi) \right\}_e = r_e(\nabla_h y_h-\Phi)$ for all $e\in\Eb$, according to \eqref{averages}, invoking the inverse estimate \eqref{E:inverse} for $D^2 v_h = r_e(\nabla_h y_h-\Phi)$ implies
	\begin{align*}
	  \| r_e(\nabla_h y_h-\Phi)\|_{L^2(\omega(e))}^2
        &\le \| h^{-1/2} (\nabla_h y_h-\Phi) \|_{L^2(e)} \, \| h^{1/2}r_e(\nabla_h y_h-\Phi)\|_{L^2(e)} \\
        & \lesssim \| h^{-1/2} (\nabla_h y_h-\Phi) \|_{L^2(e)} \, \| r_e(\nabla_h y_h-\Phi)\|_{L^2(\omega(e))} ,
	\end{align*}
and yields the desired bound for $e\in\Eb$. For interior edges $e\in\Ei$, we replace $\nabla y_h-\Phi$ with $[\nabla_h y_h]_e$ given by \eqref{jumps}, and apply the same argument.
}
\end{proof}

\rhn{Since each element $T \in\mathcal{T}_h$ intersects at most
$4$ sets $\omega(e)$, we immediately get}
\begin{equation}\label{E:boundsRB}
\begin{aligned}
\|R_h([\nabla_h y_h])\|_{L^2(\Omega)} & \lesssim
\|h^{-1/2} [\nabla_h y_h]\|_{L^2(\Gamma_h)},
\\
\|B_h([y_h])\|_{L^2(\Omega)} &\lesssim
\|h^{-3/2} [y_h]\|_{L^2(\Gamma_h)},
\end{aligned}
\end{equation}
as well as the following corollary.

\begin{Corollary}[$L^2$-bound of discrete Hessian]\label{C:HhBound}
  If $y_h\in\Vhk(g,\Phi)$ with data $(g,\Phi)$ satisfying \eqref{E:data},
  then the following bound holds
	$$
	\| H_h[y_h]\|_{\lom} \lesssim \rhn{\|y_h\|_{H_h^2(\Omega)}.}
	$$
\end{Corollary}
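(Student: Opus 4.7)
The plan is to simply apply the triangle inequality to the definition of $H_h[y_h]$ given in \eqref{E:Hh} and then bound each of the three terms by the appropriate component of $\|y_h\|_{H_h^2(\Omega)}^2$ as defined in \eqref{E:energynormN}. Concretely, from \eqref{E:Hh} we have
$$
\| H_h[y_h]\|_{L^2(\Omega)} \le \|D_h^2 y_h\|_{L^2(\Omega)} + \|R_h([\nabla_h y_h])\|_{L^2(\Omega)} + \|B_h([y_h])\|_{L^2(\Omega)}.
$$

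First I would control the broken Hessian term directly, since $\|D_h^2 y_h\|_{L^2(\Omega)}$ is one of the three summands making up $\|y_h\|_{H_h^2(\Omega)}$. Then I would apply the $L^2$-bounds for the lifting operators already established in \eqref{E:boundsRB}, namely
$$
\|R_h([\nabla_h y_h])\|_{L^2(\Omega)} \lesssim \|h^{-1/2}[\nabla_h y_h]\|_{L^2(\Gamma_h)},
\qquad
\|B_h([y_h])\|_{L^2(\Omega)} \lesssim \|h^{-3/2}[y_h]\|_{L^2(\Gamma_h)}.
$$
These bounds include the boundary contributions on $\Gamma_h^b$ coming from the convention \eqref{E:bd-jumps}, so there is nothing extra to do to accommodate the Dirichlet data $(g,\Phi)$; the jumps on $\Eb$ already enter $\|y_h\|_{H_h^2(\Omega)}$ as $v_h-g$ and $\nabla_h v_h - \Phi$.

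Finally, I would collect the three bounds and use the elementary inequality $a+b+c \le \sqrt{3}\,(a^2+b^2+c^2)^{1/2}$ applied to the three norms, which produces exactly $\|y_h\|_{H_h^2(\Omega)}$ on the right. There is no real obstacle here; the corollary is essentially a direct reading of Lemma \ref{L:LiftBounds} together with the definition \eqref{E:Hh} and the norm \eqref{E:energynormN}. The only point that required care was already handled in Lemma \ref{L:LiftBounds}, namely extending the Brezzi et al.\ estimates to include boundary edges through the jump convention \eqref{E:bd-jumps}.
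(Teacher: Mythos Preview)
Your proposal is correct and matches the paper's own proof essentially verbatim: the paper simply says to combine Definition~\ref{D:Hh} with \eqref{E:boundsRB} and \eqref{E:energynormN}, which is precisely the triangle inequality plus the lifting bounds you describe. The only difference is cosmetic---the paper does not bother writing out the $a+b+c \le \sqrt{3}(a^2+b^2+c^2)^{1/2}$ step since the $\lesssim$ absorbs any fixed constant.
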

\begin{proof}
  Combine \MakeUppercase {D}efinition\nobreakspace \ref {D:Hh} (discrete Hessian)
  with \rhn{\eqref{E:boundsRB} and \eqref{E:energynormN}.} 
\end{proof}

While there is some flexibility in the definitions \eqref{E:lift1} and
\eqref{E:lift2} of \rhn{local} lifts $r_e$ and $b_e$ for Lemma \ref{L:LiftBounds}
($L^2$-bounds
of lifting operators) to be valid, the following result reveals
that these definitions are just right for the weak convergence of $H_h[y_h]$ \rhn{provided the following condition holds on the sequence of meshes $\{ \mathcal T_h \}_{h>0}$
\begin{equation}\label{E:assumptions-F}
  \frac{\|D^m F_T^{-1}\|_{L^\infty(T)}}{\|DF_T^{-1}\|_{L^\infty(T)}^m} \le \xi(h) \rightarrow 0
  \quad\text{as } h_T\to0,
\end{equation}
for all $T=F_T(\widehat{T})\in\mathcal{T}_h$ and $m=2,3$.
Note that \eqref{E:assumptions-F} is valid for affine equivalent elements because $F_T^{-1}$ is affine and $\xi(h)=0$. However, \eqref{E:assumptions-F} is more restrictive for non-affine mappings $F_T$ (i.e. $F_T \in \lbrack \mathbb P^k \rbrack^2$ for $k\geq 2$ or $F_T \in \lbrack \mathbb Q^k \rbrack^2$ for $k\geq 1$).}

\begin{Proposition}[{weak convergence of $H_h[y_h]$}] \label{P:WeakConv}
\rhn{Let \eqref{E:assumptions-F} hold and} $y_h\in\Vhk(g,\Phi)$ with data $(g,\Phi)$ satisfying \eqref{E:data}.
If \rhn{$\|y_h\|_{H_h^2(\Omega)} \leq \Lambda$, with $\Lambda>0$ independent of $h$,} and
$y_h$ converges to a function $y \in [H^2(\Omega)]^3$ in $[\lom]^3$ as $\hb\to0$, then
\looseness=-1
$$
H_h[y_h] \rightharpoonup D^2y \quad \text{in} \quad  [\lom]^{3 \times 2 \times 2}.
$$
\end{Proposition}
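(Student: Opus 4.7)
The plan is to argue first by compactness and then identify the weak limit. Step 1 is immediate from \Cref{C:HhBound}: the hypothesis $\|y_h\|_{H_h^2(\Omega)}\le\Lambda$ provides a uniform $L^2$ bound on $H_h[y_h]$, so along a subsequence $H_h[y_h]\rightharpoonup H$ in $[L^2(\Omega)]^{3\times 2\times 2}$ for some limit $H$. To conclude it then suffices to show that $H=D^2y$, and by density of $[C_c^\infty(\Omega)]^{3\times 2\times 2}$ in $[L^2(\Omega)]^{3\times 2\times 2}$ it is enough to prove that $(H_h[y_h],\phi)_{L^2(\Omega)}\to(D^2y,\phi)_{L^2(\Omega)}$ for every such $\phi$.

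I would then fix such a $\phi$ and expand $(H_h[y_h],\phi)_{L^2(\Omega)}$ via \Cref{D:Hh} into three pieces. Elementwise integration by parts twice on $(D_h^2y_h,\phi)_{L^2(\Omega)}$ transfers derivatives onto $\phi$ and yields $(y_h,\div\div\phi)_{L^2(\Omega)}$ plus interior edge contributions $\int_{\Gamma_h^0}[\nabla_h y_h]\cdot(\phi\mu)$ and $-\int_{\Gamma_h^0}[y_h]\cdot(\div\phi\cdot\mu)$; contributions on $\Gamma_h^b$ vanish because $\phi$ has compact support in $\Omega$ for $h$ small. For the two lifting pieces, I would approximate $\phi$ by an elementwise mean $\bar\phi\in[\Hhk]^3$, which lies exactly in $\Hhk$ for $k\ge 2$ in the affine case and up to a correction controlled by $\xi(h)$ from \eqref{E:assumptions-F} in the isoparametric one. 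The defining relations \eqref{E:lift1}--\eqref{E:lift2} rewrite $(R_h([\nabla_h y_h]),\bar\phi)_{L^2(\Omega)}$ and $(B_h([y_h]),\bar\phi)_{L^2(\Omega)}$ as jump integrals on $\Gamma_h$, while the defects $(R_h,\phi-\bar\phi)_{L^2(\Omega)}$ and $(B_h,\phi-\bar\phi)_{L^2(\Omega)}$ vanish by \Cref{L:LiftBounds} combined with $\bar\phi\to\phi$ in $L^2(\Omega)$.

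The crux is the cancellation between the IBP edge contribution $\int_{\Gamma_h^0}[\nabla_h y_h]\cdot(\phi\mu)$ and the lifted term $\int_{\Gamma_h^0}[\nabla_h y_h]\cdot(\{\bar\phi\}\mu)$: their mismatch is bounded via Cauchy--Schwarz by $\|h^{-1/2}[\nabla_h y_h]\|_{L^2(\Gamma_h)}\cdot\|h^{1/2}(\phi-\{\bar\phi\})\|_{L^2(\Gamma_h^0)}=o(1)$, using the $H_h^2$-bound on $y_h$ and a standard trace/approximation estimate for $\bar\phi$. Likewise, the residual $\int_{\Gamma_h^0}[y_h]\cdot(\div\phi\cdot\mu)=O(h)$ by Cauchy--Schwarz and $\|h^{-3/2}[y_h]\|_{L^2(\Gamma_h)}\le C\Lambda$, and the $(B_h,\bar\phi)$ contribution is negligible (in the affine case $\div\bar\phi\equiv 0$ elementwise, so $(B_h,\bar\phi)_{L^2(\Omega)}=0$ exactly). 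Assembling these cancellations and error estimates yields
\[
(H_h[y_h],\phi)_{L^2(\Omega)}\to(y,\div\div\phi)_{L^2(\Omega)}=(D^2y,\phi)_{L^2(\Omega)},
\]
the last equality being IBP with $y\in[H^2(\Omega)]^3$ and compact support of $\phi$. This identifies $H=D^2y$, and by uniqueness of the weak limit the full sequence converges.

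The main obstacle will be the isoparametric setting: since $\Hhk$ is not a piecewise polynomial space on the physical elements, constructing an approximant $\phi_h\in[\Hhk]^3$ of a smooth $\phi$ satisfying $\phi_h\to\phi$ in $L^2(\Omega)$, a sharp edge-wise estimate $\|h^{1/2}(\phi-\{\phi_h\})\|_{L^2(\Gamma_h^0)}\to 0$, and adequate control of $\{\div\phi_h\}$ on interior edges, must rely essentially on hypothesis \eqref{E:assumptions-F}. Quantifying the associated approximation errors carefully is where the bulk of the technical effort will go.
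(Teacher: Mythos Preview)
Your proposal is correct and follows essentially the same route as the paper: test $H_h[y_h]$ against $\phi\in[C_c^\infty(\Omega)]^{3\times2\times2}$, integrate the $D_h^2y_h$ piece by parts twice to produce $(y_h,\div\div\phi)$ plus interior-edge jump terms, approximate $\phi$ by some $\phi_h\in[\Hhk]^3$ so that the lifting relations \eqref{E:lift1}--\eqref{E:lift2} convert $(R_h,\phi_h)$ and $(B_h,\phi_h)$ to matching edge integrals, and control all residuals via the uniform $H_h^2$ bound. The paper makes explicit exactly what you flag as the main obstacle: in the isoparametric case it builds $\phi_h=D_h^2w_h$ with $w_h|_T=\hat w\circ F_T^{-1}$ for a reference quadratic $\hat w$ whose Hessian encodes the elementwise mean of $\phi$, and uses \eqref{E:assumptions-F} to prove $\|\phi-\phi_h\|_{L^2}+\|h\nabla_h(\phi-\phi_h)\|_{L^2}+\|h^2D_h^2(\phi-\phi_h)\|_{L^2}\to0$; the only cosmetic difference is that the paper treats the $B_h$/$[y_h]$ terms in this same uniform way (hence needing the $h^2D_h^2$ estimate for the $\{\div_h(\phi-\phi_h)\}$ edge trace) rather than via your affine shortcut $\div\bar\phi=0$.
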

\begin{proof}
\rhn{Since we know $D^2 y\in [L^2(\Omega)]^{3\times 2 \times 2}$ by assumption,}
we need to show that
\[
\int_\Omega H_h[y_h] : \tau \rightarrow \int_\Omega D^2 y : \tau
\quad\text{ as $\hb\to0$}
\]
for all $\tau \in [C_0^\infty(\Omega)]^{3\times 2 \times 2}$. \rhn{First
we show this property relying on a suitable approximation $\tau_h\in[\mathbb{H}_h^k]^3$ of $\tau$, and next we construct such $\tau_h$ using \eqref{E:assumptions-F}.}

\medskip\noindent
{\it Step 1: Weak convergence.} We argue with each component of $y_h$,
integrate by parts elementwise and utilize the definition \eqref{E:Hh}
of $H_h[y_h]$ \rhn{and elementary calculations} to deduce
\begin{align*}
  \int_\Omega H_h[y_h] : \tau &=
  \sum_{T\in\mathcal{T}_h} \int_T y_h \cdot \div\div \tau
  \\
  & - \sum_{e\in\calE_h} \int_{\omega(e)} \rhn{r_e\big([\nabla_h y_h]\big)} : \big(\tau - \tau_h\big)
  + \sum_{e\in\calE_h} \int_{\omega(e)} \rhn{b_e \big([y_h]\big)} : \big(\tau - \tau_h\big)
  \\
  & + \sum_{e\in\Ei} \int_e \rhn{[\nabla_h y_h]:\big\{ \tau - \tau_h  \big\}\mu_e}
  -  \sum_{e\in\Ei} \int_e \rhn{[y_h] \cdot \big\{ \div_{\! h}\big(\tau - \tau_h\big) \big\}\mu_e}
  \\
  & \rhn{-\sum_{e\in\Eb} \int_e (\nabla_h y_h-\Phi): \tau_h \, \mu_e}+ \sum_{e\in\Eb} \int_e \rhn{(y_h-g) \cdot  \div_{\! h} \tau_h \, \mu_e} \, ,
\end{align*}
\rhn{provided $\tau_h = D_h^2 w_h\in [\mathbb{H}_h^k]^3$ is a discrete Hessian.
We point out that
there is a contribution of boundary terms $e\in\Eb$ because they appear in
Definition \ref{D:Hh} (discrete Hessian). Combining the bound
$\|y_h\|_{H_h^2(\Omega)} \leq \Lambda$ with \eqref{E:energynormN} and \eqref{E:boundsRB} gives \looseness=-1
\begin{equation}\label{E:Rh-Bh}
\begin{split}
  \|R_h([\nabla_h y_h])\|_{L^2(\Omega)}+\|B_h([y_h])\|_{L^2(\Omega)} &\lesssim\Lambda,
  \\
  \|h^{-1/2} [\nabla_h y_h]\|_{L^2(\Gamma_h)}
  +\|h^{-3/2} [y_h]\|_{L^2(\Gamma_h)} & \lesssim \Lambda.
\end{split}
\end{equation}
Suppose that there exists $w_h\in[\Vhk]^3$ such that $\tau_h=D_h^2 w_h$ satisfies
\begin{equation}\label{e:needed}
  \|\tau - \tau_h\|_{L^2(\Omega)} + \|h\nabla_h (\tau-\tau_h)\|_{L^2(\Omega)}
  + \|h^2D_h^2 (\tau-\tau_h)\|_{L^2(\Omega)} \to 0 \quad \textrm{as } h\to0.
\end{equation}
The scaled trace inequalities
\begin{gather*}
  \|h^{1/2} (\tau-\tau_h)\|_{L^2(\Gamma_h)} \lesssim
  \|\tau-\tau_h\|_{L^2(\Omega)} +  \|h\nabla_h(\tau-\tau_h)\|_{L^2(\Omega)}
  \\
  \|h^{3/2} \div_{\! h}(\tau-\tau_h)\|_{L^2(\Gamma_h)} \lesssim
  \|h \nabla_h(\tau-\tau_h)\|_{L^2(\Omega)} +  \|h^2 D_h^2(\tau-\tau_h)\|_{L^2(\Omega)},
\end{gather*}
together with \eqref{E:Rh-Bh} and \eqref{e:needed}, readily yield
\begin{align*}
\Big| \int_\Omega H_h[y_h]:\tau &- \int_\Omega y_h \cdot \div\div \tau \Big|
\lesssim \Lambda \|\tau - \tau_h\|_{L^2(\Omega)}
\\
& + \Lambda \Big( \|h\nabla_h(\tau-\tau_h)\|_{L^2(\Omega)}
+ \|h^2 D^2_h (\tau-\tau)h) \|_{L^2(\Omega)}\Big) \to 0.
\end{align*}
Since $y_h\to y$ in $[L^2(\Omega)]^3$, we obtain
\begin{equation}\label{e:full_weak}
\int_\Omega H_h[y_h]:\tau \rightarrow \int_\Omega y \cdot \div\div \tau
= \int_\Omega D^2 y : \tau \quad\textrm{as } h\to0,
\end{equation}
because $y\in [H^2(\Omega)]^3$ and $\tau\in [C_0^\infty(\Omega)]^{3\times2\times2}$.
This is the asserted limit.}

\medskip\noindent
\rhn{
{\it Step 2: Proof of \eqref{e:needed}.}
We argue componentwise and construct a scalar $w_h \in \Vhk$ such that $\tau_h = D_h^2 w_h\in \mathbb{H}_h^k$ satisfies \eqref{e:needed} for $\tau\in [C_0^\infty(\Omega)]^{2\times2}$. To this end, we choose an arbitrary element $T\in\mathcal{T}_h$ with isoparametric map $F=F_T:\hT\to T$ from the reference element $\hT$; see Section \ref{S:A-quads}. We assume that $\hT$ contains the origin, and let $x_T:=F(0)\in T$. If quadratic polynomials were contained in $\Vhk(T)$, the obvious idea would be to construct a quadratic approximation of $\tau$ in $T$ by Taylor expansion. Since this is not true for isoparametric maps $F$, we build a quadratic approximation in $\hT$ instead. We denote by
$\overline{\tau}_T$ the meanvalue of $\tau$ within $T$, set
\[
\widehat{\tau} := DF^T(0) \, \overline{\tau}_T \, DF(0) \in \mathbb{R}^{2\times2},
\]
and note that $\|\widehat{\tau}\|_{L^2(\hT)} \lesssim h_T \|\overline{\tau}_T\|_{L^2(T)}$.
Let $w_h:=\hw\circ F^{-1}\in\Vhk(T)$ where $\hw$ is the quadratic function $\hw (\hx) = \frac12 \hx^T \widehat{\tau} \hx$ in $\hT$; hence $D^2\hw(\hx)=\widehat{\tau}$ for all $\hx\in\hT$. According to \eqref{E:point-second}, the Hessian of $w_h$ within $T$ reads
\[
\partial_{ij} w_h(x) = \sum_{m,n} \, \partial_{mn} \hw(\hx) \, \partial_j F_n^{-1}(x)\partial_i F_m^{-1}(x) + \sum_m \partial_m \hw(\hx) \, \partial_{ij} F_m^{-1}(x).
\]
Since $\nabla \hw(0)=0$, we infer that
\[
D^2 w_h(x_T) = DF^{-T}(x_T) \, \widehat\tau \, DF^{-1}(x_T) = \overline{\tau}_T,
\]
whence Poincar\'e inequality implies
\[
\|\tau - D^2 w_h(x_T)\|_{L^2(T)} \lesssim h_T \|D\tau\|_{L^2(T)}.
\]
We need to control the deviation of $D^2 w_h$ from constant due to $F$;
for affine equivalent elements, $D^2 w_h=D^2 w_h(x_T)$ turns out to be constant.
It is here that we exploit \eqref{E:assumptions-F} to deal with $F$.
Differentiating $\partial_{ij} w_h$ and using $D^3\hw=0$, we discover
\begin{align*}
  \|h D^3 w_h\|_{L^2(T)} &\lesssim h_T^2 \|DF^{-1}\|_{L^\infty(T)}^3 \|D^3\hw\|_{L^2(\hT)}
  \\
  & + h_T^2 \|D^2F^{-1}\|_{L^\infty(T)}\|DF^{-1}\|_{L^\infty(T)}
  \|D^2\hw\|_{L^2(\hT)}
  \\
  & + h_T^2 \|D^3F^{-1}\|_{L^\infty(T)}\|\nabla\hw\|_{L^2(\hT)}
   \lesssim \xi(h) h_T^{-1} \|\widehat\tau\|_{L^2(\hT)}
  \lesssim \xi(h) \|\overline{\tau}_T\|_{L^2(T)},
\end{align*}
whence $\delta D^2 w_h = D^2 w_h - D^2 w_h(x_T)$ satisfies
\begin{align*}
  \|\delta D^2 w_h\|_{L^2(T)} &\lesssim h_T\|\delta D^2 w_h\|_{L^\infty(T)}
  \\
  & \lesssim h_T^2 \|D^3 w_h\|_{L^\infty(T)} \lesssim h_T \|D^3 w_h\|_{L^2(T)}
  \lesssim \xi(h) \|\overline{\tau}_T\|_{L^2(T)}
\end{align*}
and $\|\tau-\tau_h\|_{L^2(\Omega)}\to0$ as $h\to0$ because
$\sum_{T\in\mathcal{T}_h} \| \overline{\tau}_T\|_{L^2(T)}^2 \le \|\tau\|_{L^2(\Omega)}^2$.
Moreover, we also see that
\begin{equation*}
  \|h\nabla_h(\tau-\tau_h)\|_{L^2(\Omega)} \lesssim
  \|h \nabla\tau\|_{L^2(\Omega)} + \|h D_h^3 w_h\|_{L^2(\Omega)} \to 0
  \quad\textrm{as } h\to0.
\end{equation*}
Arguing with $D_h^4 w_h$ similarly to $D_h^3 w_h$, we easily derive
\begin{gather*}
  \|h^2 D^4 w_h\|_{L^2(T)}\lesssim \xi(h) \|\overline{\tau}_T\|_{L^2(T)},
\end{gather*}
as well as $\|h^2 D_h^2(\tau-\tau_h)\|_{L^2(\Omega)} \to 0$ as $h\to0$.
This shows \eqref{e:needed} as desired. 
}
\end{proof}

\rhn{
We reiterate that condition \eqref{E:assumptions-F} is innocuous for affine equivalent elements because $D^2F_T^{-1}=0$, but is rather restrictive otherwise (e.g quadrilateral meshes). In order to avoid \eqref{E:assumptions-F}, we content ourselves with a weaker statement yet sufficient for $\Gamma$-convergence of $E_h$ towards $E$. We state this next.}

\rhn{
\begin{Proposition}[lim-inf property of the reconstructed Hessian]\label{p:liminf_H}
Let $y_h\in\Vhk(g,\Phi)$ and $y \in \mbV(g,\Phi)$
with data $(g,\Phi)$ satisfying \eqref{E:data}.
If $\|y_h\|_{H_h^2(\Omega)} \leq \Lambda$, with $\Lambda>0$ independent of $h$, and
$y_h\to y$ in $[\lom]^3$, and $y_h\to y$, $\nabla_h y_h \to \nabla y$ in $L^2(\partial\Omega)$ as $\hb\to0$, then
there exists $H \in \lbrack L^2(\Omega)\rbrack^{3\times 2 \times 2}$ such that $H_h[y_h]  \rightharpoonup H$ in $\lbrack L^2(\Omega)\rbrack^{3\times 2\times 2}$ and
\begin{equation}\label{e:ortho_H}
 \int_\Omega (H-D^2 y) : D^2 w = 0, \qquad \forall w \in \lbrack H^2(\Omega)\rbrack^{3}.
\end{equation}
Furthermore, the following estimate holds
\begin{equation}\label{e:liminf_H}
\| D^2 y \|_{L^2(\Omega)} \leq \liminf_{h \to 0} \| H_h[y_h] \|_{L^2(\Omega)}.
\end{equation}
\end{Proposition}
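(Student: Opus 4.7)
The plan begins with boundedness: since $\|y_h\|_{H_h^2(\Omega)}\le\Lambda$, Corollary~\ref{C:HhBound} gives $\|H_h[y_h]\|_{L^2(\Omega)}\lesssim\Lambda$, so after extracting a subsequence (not relabeled) we obtain $H_h[y_h]\rightharpoonup H$ in $[L^2(\Omega)]^{3\times 2\times 2}$ for some $H$. To establish the orthogonality relation~\eqref{e:ortho_H}, by density of $[C^\infty(\overline{\Omega})]^3$ in $[H^2(\Omega)]^3$ it suffices to consider smooth $w$, for which classical integration by parts gives $\int_\Omega D^2 y : D^2 w = \int_\Omega y\,\Delta^2 w - \int_{\partial\Omega} y\,\partial_\mu \Delta w + \int_{\partial\Omega}\nabla y\cdot \partial_\mu \nabla w$. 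I would then set $w_h:=\Pi_h w\in [\mathring{\mathbb{V}}_h^k]^3$, the Cl\'ement interpolant from Lemma~\ref{L:interpolation}, which is continuous and satisfies $D_h^2 w_h\to D^2 w$ in $[L^2(\Omega)]^{2\times 2}$ by standard interpolation estimates. Since $\|H_h[y_h]\|_{L^2}$ is bounded, $\int_\Omega H_h[y_h]:(D^2w-D_h^2 w_h)\to 0$ and the task reduces to passing to the limit in $\int_\Omega H_h[y_h]:D_h^2 w_h$.

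The key step is to apply identity~\eqref{e:hessian_match} with $\tau_h=D_h^2 w_h$ and perform element-wise integration by parts twice on $\int_\Omega D_h^2 y_h:D_h^2 w_h$. Using the continuity of $w_h$ (so $[w_h]_e=0$ on $\Gamma_h^0$) together with the boundary jump conventions \eqref{E:bd-jumps} on $\Gamma_h^b$, several cancellations occur and one arrives at an identity of the form
\begin{equation*}
  \int_\Omega H_h[y_h]:D_h^2 w_h = \int_\Omega y_h\,\Delta_h^2 w_h - \int_{\partial\Omega} y_h\,\partial_\mu \Delta w_h + \int_{\partial\Omega}\nabla_h y_h\cdot\partial_\mu \nabla w_h + \mathcal{J}_h + \mathcal{B}_h,
\end{equation*}
where $\mathcal{J}_h$ gathers interior-jump contributions on $\Gamma_h^0$ and $\mathcal{B}_h$ gathers the $\Gamma_h^b$-contributions arising from the data $(g,\Phi)$. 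The bulk and $\partial\Omega$-integrals pass to their continuous analogues using $y_h\to y$ in $L^2(\Omega)$ and the trace hypotheses $y_h\to y$, $\nabla_h y_h\to \nabla y$ in $L^2(\partial\Omega)$, while $\mathcal{B}_h$ merges with the $\partial\Omega$-integrals through $y=g$ and $\nabla y=\Phi$ on $\partial_D\Omega$. The jump contributions $\mathcal{J}_h$ are controlled via $\|h^{-3/2}[y_h]\|_{L^2(\Gamma_h)}+\|h^{-1/2}[\nabla_h y_h]\|_{L^2(\Gamma_h)}\lesssim\Lambda$ combined with the regularity of $w$, whence $\mathcal{J}_h\to 0$. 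This yields $\lim_{h\to 0}\int_\Omega H_h[y_h]:D^2w = \int_\Omega D^2y:D^2w$ and hence~\eqref{e:ortho_H}. The hardest step is precisely this limit passage: certain contributions in $\mathcal{J}_h$ pair averages $\{y_h\}$ or $\{\nabla_h y_h\}$ with jumps of $\partial_\mu\Delta w_h$ and $\partial_\mu\nabla w_h$ and are not immediately controlled by a direct Cauchy-Schwarz estimate, especially when the polynomial degree $k$ is low (so that $\Delta_h^2 w_h$ or $\nabla_h\Delta_h w_h$ may vanish identically); the resolution requires a compensating integration by parts to recast these problematic contributions in a form that involves only $y_h\to y$ in $L^2(\Omega)$ and the boundary-trace hypotheses.

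Finally, \eqref{e:liminf_H} follows from \eqref{e:ortho_H} by a projection argument: since $D^2y$ lies in the closed subspace $\mathcal{M}:=\{D^2w:w\in [H^2(\Omega)]^3\}$ of $[L^2(\Omega)]^{3\times 2\times 2}$, the orthogonality $H-D^2y\in \mathcal{M}^\perp$ makes $D^2y$ the $L^2$-orthogonal projection of $H$ onto $\mathcal{M}$, and hence $\|D^2y\|_{L^2(\Omega)}\le \|H\|_{L^2(\Omega)}$. Weak lower semicontinuity gives $\|H\|_{L^2(\Omega)}\le \liminf_{k\to\infty}\|H_{h_k}[y_{h_k}]\|_{L^2(\Omega)}$ along the extracted subsequence, and extracting a further subsequence realizing $\liminf_{h\to 0}\|H_h[y_h]\|_{L^2(\Omega)}$ upgrades the inequality to the full sequence.
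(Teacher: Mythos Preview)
Your overall architecture---extract a weak limit $H$ via Corollary~\ref{C:HhBound}, prove the orthogonality~\eqref{e:ortho_H} by testing against smooth $w$, and derive~\eqref{e:liminf_H} from~\eqref{e:ortho_H} by orthogonal projection onto $\{D^2 w:w\in H^2\}$---is sound and matches the paper. The gap is in the central limit passage, and you have in fact identified it yourself without closing it.

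The strategy of applying~\eqref{e:hessian_match} with $\tau_h=D_h^2 w_h$, $w_h=\Pi_h w$, and then integrating $\int_\Omega D_h^2 y_h:D_h^2 w_h$ by parts twice element-wise does not go through. The bulk term you obtain is $\int_\Omega y_h\,\Delta_h^2 w_h$, a pairing of $y_h$ with fourth-order broken derivatives of a piecewise $\mathbb{Q}_k$ (or $\mathbb{P}_k$) function. For $k=2$ on triangles $\Delta_h^2 w_h\equiv 0$, so this bulk term vanishes identically and cannot recover $\int_\Omega y\,\Delta^2 w$; the missing mass has migrated entirely into your $\mathcal{J}_h$. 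The interior contributions in $\mathcal{J}_h$ pair $\{\nabla_h y_h\}$ or $\{y_h\}$ against jumps of $D_h^2 w_h$ and $\nabla_h\Delta_h w_h$, and a direct Cauchy--Schwarz estimate gives at best $O(1)$: e.g.\ $\|\nabla_h y_h\|_{L^2(\Gamma_h^0)}\lesssim h^{-1/2}$ while $\|[D_h^2 w_h]\|_{L^2(\Gamma_h^0)}\lesssim h^{1/2}$ for $k=2$. The ``compensating integration by parts'' you invoke is not specified, and any such compensation would in effect have to undo the integration by parts against $w_h$ and reinstate the smooth test function---which is precisely the paper's route.

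The paper avoids this difficulty by never integrating by parts against the discrete $w_h$. Instead it tests $H_h[y_h]$ against the \emph{smooth} tensor $\tau=D^2 w$ directly, expands $H_h[y_h]=D_h^2 y_h-R_h+B_h$, and integrates $\int_\Omega D_h^2 y_h:\tau$ by parts element-wise: since $\tau$ is smooth, $\div\div\tau$ makes classical sense and no interior jumps of $\tau$ arise. The discrete approximation $\tau_h=D_h^2 w_h\in\mathbb{H}_h^k$ is needed only to evaluate the lifting terms through their defining relations~\eqref{E:lift1}--\eqref{E:lift2}; what remains is a residual governed by $\|\tau-\tau_h\|_{L^2(\Omega)}+\|h\nabla_h(\tau-\tau_h)\|_{L^2(\Omega)}+\|h^2 D_h^2(\tau-\tau_h)\|_{L^2(\Omega)}\to 0$, paired with the uniformly bounded quantities $\|R_h\|_{L^2}$, $\|B_h\|_{L^2}$, and the scaled jumps of $y_h$. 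The construction of such a $\tau_h$ (an element-wise $L^2$-projection on the reference element rather than the Cl\'ement interpolant) together with the needed estimate is the technical Step~2 of the paper's proof and is where the isoparametric subtleties enter. Your final paragraph deriving~\eqref{e:liminf_H} from~\eqref{e:ortho_H} is correct and coincides with the paper's argument.
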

\begin{proof}
We assume that the reference element $\widehat T$ is the unit square and omit the somewhat simpler case of the unit triangle for which interpolations estimates such as \eqref{E:error-quad-curved} are standard.
We split the proof into two steps.

\medskip\noindent
{\it Step 1: Proof of \eqref{e:ortho_H}.}
Thanks to Corollary~\ref{C:HhBound} ($L^2$-bound on the Hessian) and the boundedness assumption $\|y_h\|_{H_h^2(\Omega)} \leq \Lambda$, there exists $H \in L^2(\Omega)^{3\times 2 \times 2}$ such that
\begin{equation}\label{e:H_H_h}
H_h[y_h]  \rightharpoonup H  \quad  \textrm{in }\lbrack L^2(\Omega)\rbrack^{3\times 2\times 2}  \quad \Rightarrow \quad
\| H \|_{L^2(\Omega)} \leq \liminf_{h \to 0} \| H_h[y_h] \|_{L^2(\Omega)}.
\end{equation}
Proceeding as in Step 1 of the proof of Proposition~\ref{P:WeakConv} with $\tau\in [C^\infty(\overline{\Omega})]^{2 \times 2}$ we get boundary terms involving both $\tau$ and $\div\tau$ which result in the last line of the expression for $\int_\Omega H_h[y_h] : \tau$ being replaced by $I + II + III$, where
\begin{align*}
 I =  &- \int_{\partial_D\Omega} \big( \nabla_h y_h - \Phi \big) : \big(\tau_h-\tau\big) \mu
    + \int_{\partial_D\Omega} \big(y_h - g \big) \cdot \div_{\! h} \big(\tau_h-\tau\big) \mu,
   \\
 II = &  \int_{\partial\Omega\setminus\partial_D\Omega} \big( \nabla_h y_h - \nabla y \big) : \tau \mu - \int_{\partial\Omega\setminus\partial_D\Omega} \big(y_h - y \big) \cdot \div\tau \mu,
   \\
 III = &  \int_{\partial\Omega} \nabla y : \tau \mu - \int_{\partial\Omega} y \cdot \div\tau \mu.
\end{align*}
Let $\tau = D^2  w$ for $w \in \lbrack C^\infty(\overline\Omega)\rbrack^{3}$ and $\tau_h = D^2_h w_h$ where $w_h$ is chosen as follows: given $T\in\mathcal{T}_h$, let $\hw_T=w|_T\circ F$, $\widehat{P}\hw_T$ be the $L^2$-projection of $\hw_T$ onto the space $\mathbb{Q}_k$, and $w_h|_T = \widehat{P}\hw_T \circ F^{-1}$. We intend to prove the estimates
\begin{equation}\label{E:est-tau}
  \| \tau - \tau_h \|_{L^2(\Omega)} 
  + \| h \nabla_h (\tau - \tau_h) \|_{L^2(\Omega)}
  + \| h^2 D_h^2 (\tau - \tau_h) \|_{L^2(\Omega)} \lesssim h,
\end{equation}
which are refinements of \eqref{e:needed} for $w \in \lbrack C^\infty(\overline\Omega)\rbrack^{3}$.
In view of \eqref{E:est-tau}, employing the same argument as in Step 1 of Proposition
\ref{P:WeakConv}, together with the assumption that $y_h\to y$ and $\nabla_h y_h\to\nabla y$ in $L^2(\partial\Omega)$, we infer that $|I|, |II| \to 0$ as well as
\begin{equation*}
\int_\Omega H_h[y_h]:D^2 w \rightarrow \int_\Omega y \cdot  \div\div D^2 w - \int_{\partial \Omega} y \cdot \div D^2 w \, \mu + \int_{\partial \Omega} \nabla y : D^2 w \, \mu,
\end{equation*}
as $h\to 0$ for all $w \in \lbrack C^\infty(\overline\Omega)\rbrack^{3}$. 
Integrating twice back by parts, we deduce
\begin{equation}\label{e:partial_wc}
\int_\Omega H_h[y_h]:D^2 w \rightarrow \int_\Omega D^2 y :  D^2 w \quad \textrm{as }h  \to 0.
\end{equation}
Upon approximating any $H^2(\Omega)$ by functions in $[C^\infty(\overline\Omega)]$, relation \eqref{e:partial_wc} is also valid for any $w \in [H^2(\Omega)]^3$. This combined with \eqref{e:H_H_h} yields the orthogonality property \eqref{e:ortho_H}. The latter in turn implies $\|D^2 y\|_{L^2(\Omega)} \le \| H \|_{L^2(\Omega)}$ as well as \eqref{e:liminf_H}.

\medskip\noindent
{\it Step 2: Proof of \eqref{E:est-tau}.}    
We recall that $F:\hT\to T$ is the isoparametric map between $\hT$
and $T\in\mathcal{T}_h$.
The first estimate in \eqref{E:est-tau} is similar to \eqref{E:error-quad-curved}:
\[
\|\tau-\tau_h\|_{L^2(T)} = \|D^2 (w - w_h)\|_{L^2(T)} \lesssim
h_T \|w\|_{H^3(T)}
\]
upon replacing the Lagrange interpolant $I_h w$ by $w_h$ defined in Step 1.
To prove the middle estimate in \eqref{E:est-tau} we observe that if $k\ge 3$,
then \eqref{E:error-quad-curved} yields
\[
\|\nabla(\tau-\tau_h)\|_{L^2(T)} = \|D^3 (w - w_h)\|_{L^2(T)} \lesssim
h_T \|w\|_{H^4(T)}.
\]
However, this estimate is inadequate for $k=2$ for which we revisit the proof
of Lemma \ref{L:error-quad} (error estimate for quadrilaterals) and add and subtract
the $L^2$-projection $\widehat{P}_3\hw$ onto the space of polynomials of degree
$\le 3$ to arrive at
\begin{equation*}
\|D^3 (w - w_h)\|_{L^2(T)} \lesssim h_T^{-2}
\sum_{j=1}^3 \left( \|D^j(\hw - \widehat{P}_3\hw)\|_{L^2(\hT)}
+ \|D^j(\widehat{P}_3\hw - \widehat{P}\hw)\|_{L^2(\hT)}\right).
\end{equation*}
For the first term, \rhn{we use the Bramble-Hilbert estimate in $\mathbb{Q}_3$ to deduce
\[
\sum_{j=1}^3 \|D^j(\hw - \widehat{P}_3\hw)\|_{L^2(\hT)} \lesssim \|[D^4\hw]\|_{L^2(\hT)} \lesssim h_T^3 \|w\|_{H^4(T)}
\]
in view of \eqref{E:Dk+1-iso}.
For the second term, instead, we use an inverse estimate
\begin{equation*}
  \|D^3(\widehat{P}_3\hw - \widehat{P}\hw)\|_{L^2(\hT)}
  \lesssim \|D^2(\widehat{P}_3\hw - \widehat{P}\hw)\|_{L^2(\hT)},
\end{equation*}
and next add and subtract $\hw$. We first realize that
$\|D^2(\hw - \widehat{P}_3\hw)\|_{L^2(\hT)}$ may be tackled as in the case $k\ge3$
whereas
\[
\sum_{j=1}^2 \|D^j(\hw - \widehat{P}\hw)\|_{L^2(\hT)}\lesssim \|[D^3 \hw]\|_{L^2(\hT)}\lesssim h_T^2 \|w\|_{H^3(T)}
\]
according to \eqref{E:Dk+1-iso} again.} Altogether, we obtain the bound
\[
\|D^3 (w - w_h)\|_{L^2(T)} \lesssim
h_T \|w\|_{H^4(T)} + \|w\|_{H^3(T)} \lesssim \|w\|_{H^4(T)},
\]
which implies the
desired middle estimate in \eqref{E:est-tau}. Exactly the same procedure
leads to the remaining estimate for $\|D^4 (w - w_h)\|_{L^2(T)}$ provided
$w\in H^5(T)$, except that
this time we invoke the $L^2$-projection $\widehat{P}_4$ onto polynomials of
degree $\le 4$ and an inverse estimate to relate $\| D^4 \widehat v \|_{L^2(\widehat T)}$ to $\| D^2 \widehat v \|_{L^2(\widehat T)}$ for $\widehat v$ polynomial; we thus deduce
\rhn{$h_T \|D^4 (w-w_h)\|_{L^2(T)} \lesssim \|w\|_{H^5(T)}$.}
This concludes the proof.
\end{proof}
}

Another consequence of Definition \ref{D:Hh} (discrete Hessian) and Lemma \ref{L:LiftBounds} ($L^2$-bounds of lifting operators) is \eqref{E:HhIneq} with constant $1$. This is crucial for the rest.

\begin{Proposition} [{relation between $E_h[y_h]$ and $H_h[y_h]$}] \label{P:HhIneq}
Let data $(g,\Phi,f)$ satisfy \eqref{E:data} and $y_h\in\Vhk(g,\Phi)$.
Then the discrete energy \rhn{$E_h[y_h]$  defined in \eqref{E:DEn}
and the discrete Hessian $H_h[y_h]$} defined in \eqref{E:Hh} satisfy
	\begin{equation*}
	\frac{1}{2} \|H_h[y_h]\|_\lom^2 - (f,y_h)_\lom \leq E_h[y_h],
	\end{equation*}
provided the penalty parameters $\gamma_0$ and $\gamma_1$ are chosen sufficiently large.
\end{Proposition}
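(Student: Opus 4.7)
The plan is to expand $\|H_h[y_h]\|_{L^2(\Omega)}^2$ using Definition \ref{D:Hh} and then compare with $a_h(y_h,y_h)$ defined in \eqref{E:def-ah}, so that the inequality reduces to a nonnegativity statement that is satisfied as long as $\gamma_0,\gamma_1$ dominate the lifting constants.

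First I would apply the identities \eqref{E:lift-1} and \eqref{E:lift-2} with the choice $w_h = y_h$ (componentwise, so that $\tau_h = D_h^2 y_h$). These give
\[
\big(R_h([\nabla_h y_h]),D_h^2 y_h\big)_{L^2(\Omega)} = \big([\nabla_h y_h],\{\partial_\mu \nabla_h y_h\}\big)_{L^2(\Gamma_h)},
\]
\[
\big(B_h([y_h]),D_h^2 y_h\big)_{L^2(\Omega)} = \big([y_h],\{\partial_\mu \Delta_h y_h\}\big)_{L^2(\Gamma_h)}.
\]
Expanding $\|D_h^2 y_h - R_h([\nabla_h y_h]) + B_h([y_h])\|_{L^2(\Omega)}^2$ and substituting the two identities above turns the cross terms involving $D_h^2 y_h$ into precisely the consistency boundary terms that also appear in $a_h(y_h,y_h)$. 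Subtracting, the broken Hessian norms $\|D_h^2 y_h\|^2$ and the two boundary consistency terms cancel exactly, leaving
\begin{align*}
  a_h(y_h,y_h) - \|H_h[y_h]\|_{L^2(\Omega)}^2 =\ & -\|R_h([\nabla_h y_h])\|_{L^2(\Omega)}^2 - \|B_h([y_h])\|_{L^2(\Omega)}^2 \\
  & + 2\big(R_h([\nabla_h y_h]),B_h([y_h])\big)_{L^2(\Omega)} \\
  & + \gamma_1 \|h^{-1/2}[\nabla_h y_h]\|_{L^2(\Gamma_h)}^2 + \gamma_0 \|h^{-3/2}[y_h]\|_{L^2(\Gamma_h)}^2.
\end{align*}
The key observation is the algebraic identity $-\|a\|^2 - \|b\|^2 + 2(a,b) = -\|a-b\|^2$, which collapses the three lifting terms into $-\|R_h([\nabla_h y_h]) - B_h([y_h])\|_{L^2(\Omega)}^2$.

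Next I would estimate this negative term using the triangle inequality together with Lemma \ref{L:LiftBounds} ($L^2$-bounds of lifting operators) and the finite overlap of the patches $\{\omega(e)\}_{e\in\calE_h}$ (each element $T\in\mathcal T_h$ belongs to a bounded number of such patches). This yields
\[
\|R_h([\nabla_h y_h]) - B_h([y_h])\|_{L^2(\Omega)}^2 \le C_1 \|h^{-1/2}[\nabla_h y_h]\|_{L^2(\Gamma_h)}^2 + C_0 \|h^{-3/2}[y_h]\|_{L^2(\Gamma_h)}^2
\]
for constants $C_0,C_1$ depending only on shape regularity. Choosing $\gamma_0\ge C_0$ and $\gamma_1\ge C_1$ makes the full right-hand side of the displayed difference nonnegative, so $\|H_h[y_h]\|_{L^2(\Omega)}^2 \le a_h(y_h,y_h)$, and subtracting $(f,y_h)_{L^2(\Omega)}$ and dividing by $2$ on both sides yields the stated inequality in view of \eqref{E:DEn}.

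The main subtlety is that the proof appears to require a further smallness (Cauchy--Schwarz with $\alpha$/$\alpha^{-1}$ weights) before invoking Lemma \ref{L:LiftBounds}, but the $-\|R_h-B_h\|^2$ recombination bypasses this entirely: no cross product ever needs to be estimated in absolute value, only the triangle inequality is used, so the constants $C_0,C_1$ are the bare geometric lifting constants. One also must keep track of the fact that boundary jumps in $[y_h]$, $[\nabla_h y_h]$ on $\Gamma_h^b$ use the data $(g,\Phi)$ via \eqref{E:bd-jumps}, so all computations above are consistent with $y_h\in\Vhk(g,\Phi)$ without any extra data-dependent residuals. This is really the only bookkeeping concern; the argument is otherwise a direct algebraic identity followed by the lifting bounds.
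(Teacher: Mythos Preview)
Your proof is correct and is essentially the same as the paper's: the paper packages your displayed difference as $J_h[y_h]=-\tfrac12\|B_h([y_h])-R_h([\nabla_h y_h])\|_{L^2(\Omega)}^2+\gamma_0\|h^{-3/2}[y_h]\|_{L^2(\Gamma_h)}^2+\gamma_1\|h^{-1/2}[\nabla_h y_h]\|_{L^2(\Gamma_h)}^2$, derives the identity $\tfrac12\|H_h[y_h]\|_{L^2(\Omega)}^2-(f,y_h)_{L^2(\Omega)}=E_h[y_h]-J_h[y_h]$, and then uses the same lifting bounds \eqref{E:boundsRB} to conclude $J_h[y_h]\ge0$ for $\gamma_0,\gamma_1$ large enough. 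Your recombination $-\|R_h\|^2-\|B_h\|^2+2(R_h,B_h)=-\|R_h-B_h\|^2$ is precisely how the $-\tfrac12\|B_h-R_h\|^2$ term arises in the paper.
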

\begin{proof}
We expand the expression for $H_h[y_h]$ and utilize
\eqref{E:lift-1} and \eqref{E:lift-2} to obtain
\begin{equation}\label{e:for_later}
  \frac{1}{2} \int_{\Omega} |H_h[y_h]|^2 - \rhn{\int_{\Omega} f \cdot y_h = E_h[y_h]
  - J_h[y_h],}
\end{equation}
where
\begin{align*}
  \rhn{J_h[y_h]} := &-\frac{1}{2} \big\| B_h([y_h])-R_h([\nabla_h y_h]) \big\|_{\lom}^2
  \\
  & + \gamma_0 \big\| h^{-3/2} [y_h] \big\|_{L^2(\Gamma_h)}^2
    + \gamma_1 \big\| h^{-1/2} [\nabla_h y_h] \big\|_{L^2(\Gamma_h)}^2.
\end{align*}
In view of \eqref{E:boundsRB} there is a constant  
$C$ independent of $h$ such that
\begin{equation}\label{e:jump_control}
\rhn{J_h[y_h] \geq (\gamma_0-C) \big\|h^{-3/2} [y_h]\big\|_{L^2(\Gamma_h)}^2
+ (\gamma_1-C) \big\|h^{-1/2} [\nabla_h y_h]\big\|_{L^2(\Gamma_h)}^2 \ge 0}
\end{equation}
for $\gamma_0$ and $\gamma_1$ sufficiently large. This is what we intended to prove.
\end{proof}

It is convenient to point out that $\gamma_0,\gamma_1$ sufficiently large in 
\eqref{e:jump_control} also yields
\begin{equation}\label{E:sign-J}
\rhn{  \big\|h^{-3/2} [y_h]\big\|_{L^2(\Gamma_h)}^2 + \big\|h^{-1/2} [
   \nabla_h y_h]\big\|_{L^2(\Gamma_h)}^2
  \lesssim J_h[y_h].
}  
\end{equation}

\section{$\Gamma$-Convergence of $E_h$} \label{S:ConvergencePl}

In this section we prove the $\Gamma$-convergence of $E_h$ to $E$. \rhn{This consists
of three parts. We first show equi-coercivity and compactness for $y_h \in \Ahk(g,\Phi)$
in Section \ref{S:equi-coercive}. We next prove a lim-inf inequality for $E_h[y_h]$ in
Section \ref{S:lim-inf} and a lim-sup property in Section \ref{ss:limsup}; the former
uses lower semicontinuity of the $L^2$-norm whereas the latter hinges on constructing a
recovery sequence for any $y \in \mathbb A(g,\Phi)$ via regularization and interpolation.
These three results are responsible for $\Gamma$-convergence of $E_h$
as well as convergence (up to a subsequence) of global minimizers $y_h$ of
$E_h$ in $\Ahk(g,\Phi)$ to global minimizers of $E$ in $\mathbb A(g,\Phi)$; this is
the topic of Section \ref{S:G-convergence}.
Lastly, we combine these results in Section \ref{S:strong-conv}
to show that the stabilization terms}
$$
\big\|h^{-3/2}[y _h]\big\|_{L^2(\Gamma_h)} \rightarrow 0, \quad 
\big\|h^{-1/2}[\nabla_h y _h]\big\|_{L^2(\Gamma_h)} \rightarrow 0 \quad \textrm{as }\hb\to 0,
$$
for such \rhn{$y_h$ and that $H_h[y_h]$ in Proposition \ref{p:liminf_H} (lim-inf property of the reconstructed Hessian) as well as $D_h^2 y_h$ converge to $D^2y$ strongly in $[\lom]^{3\times 2 \times 2}$.}

\subsection{Equi-coercivity and compactness}\label{S:equi-coercive}

If data $(g,\Phi,f)$ satisfies \eqref{E:data} and $y_h \in \Vhk(g,\Phi)$ possesses
a uniform bound $E_h[y_h] \leq \Lambda$ for all $h>0$, then \rhn{Lemma \ref{L:Coercivity}} (coercivity of $E_h$) guarantees {\it equi-coercivity}
\begin{equation}\label{E:equi-coerc}
  \rhn{\|y_h\|_{H_h^2(\Omega)}^2} \lesssim E_h[y_h]  + \rhn{C_F^* R(g,\Phi,f)\le C,}
\end{equation}
\rhn{where $C>0$ is independent of $h$.} This, together with \rhn{Friedrichs-type estimate} \eqref{e:H2typePoincare}, leads to weakly converging subsequences.

We now establish the $L^2$-compactness property.

\begin{Proposition}[compactness in $\lom$] \label{P:L2conv}
  \rhn{Let data $(g,\Phi,f)$ satisfy \eqref{E:data} and}
  let the sequence $\{y_h\}_{h>0} \subset \rhn{\Ahk(g,\Phi)}$ satisfy the uniform bound 
	$E_h[y_h] \leq \Lambda$
	for all $h>0$ with $\Lambda$ independent of $h$.
	Then there exists a subsequence (not relabeled) and a function
        \rhn{$y \in \mathbb{A}(g,\Phi)$ such that 
	$y_h \rightarrow y$ in $[L^2(\Omega)]^3$,
        $\nabla_h y_h\to\nabla y$ in $[L^2(\Omega)]^{3\times2}$,
        $y_h\to y$ in $[L^2(\partial\Omega)]^3$ and
        $\nabla_h y_h \to \nabla y$ in $[L^2(\partial\Omega)]^{2\times3}$
        as $\hb,\eps\to0$.
        }
\end{Proposition}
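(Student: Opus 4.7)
The plan is to combine equi-coercivity with the Clément-type interpolation $\Pi_h$ from Lemma~\ref{L:interpolation} to extract a strongly $L^2$-convergent subsequence, identify the limits via elementwise integration by parts, and then transfer the boundary conditions and the isometry constraint to the limit using the jump penalties and the discrete isometry defect.

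By Lemma~\ref{L:Coercivity} and the hypothesis $E_h[y_h]\leq\Lambda$, I first obtain $\|y_h\|_{H^2_h(\Omega)}\lesssim 1$, so $\|D_h^2 y_h\|_{L^2(\Omega)}$, $\|h^{-1/2}[\nabla_h y_h]\|_{L^2(\Gamma_h)}$ and $\|h^{-3/2}[y_h]\|_{L^2(\Gamma_h)}$ are all uniformly bounded; combined with \eqref{e:H2typePoincare}, this also bounds $\|y_h\|_{L^2(\Omega)}$ and $\|\nabla_h y_h\|_{L^2(\Omega)}$. I next apply $\Pi_h$ componentwise to $y_h$ and to the broken gradient $\nabla_h y_h$. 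Since $\nabla_h(\nabla_h y_h)=D_h^2 y_h$ and $[\nabla_h y_h]$ are under control, Lemma~\ref{L:interpolation} yields uniform $H^1(\Omega)$-bounds on both $\{\Pi_h y_h\}$ and $\{\Pi_h\nabla_h y_h\}$ together with $\|y_h-\Pi_h y_h\|_{L^2(\Omega)}+\|\nabla_h y_h-\Pi_h\nabla_h y_h\|_{L^2(\Omega)}\lesssim h$. Along a diagonal subsequence, Rellich's theorem then provides $y\in H^1(\Omega)$ and $\Psi\in H^1(\Omega)$ with $\Pi_h y_h\to y$ and $\Pi_h\nabla_h y_h\to\Psi$ strongly in $L^2$; the interpolation bound transfers this to $y_h\to y$ and $\nabla_h y_h\to\Psi$ strongly in $L^2(\Omega)$.

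To identify $\Psi=\nabla y$, I test against $\phi\in[C_0^\infty(\Omega)]^{3\times 2}$ and integrate by parts element by element; the interior-jump contribution is bounded by $\|h^{-3/2}[y_h]\|_{L^2(\Gamma_h^0)}\,\|h^{3/2}\phi\|_{L^2(\Gamma_h^0)}\lesssim h\to 0$ via a scaled trace inequality, so in the limit $\int_\Omega \Psi:\phi=-\int_\Omega y\cdot\textrm{div}\,\phi$, whence $\Psi=\nabla y$ distributionally. The uniform $H^1$-bound on $\Pi_h\nabla_h y_h$ then gives $\nabla y\in H^1(\Omega)$, i.e.\ $y\in[H^2(\Omega)]^3$. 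For the boundary convergences, Corollary~\ref{C:boundary} provides $\|y_h-\Pi_h y_h\|_{L^2(\partial\Omega)}+\|\nabla_h y_h-\Pi_h\nabla_h y_h\|_{L^2(\partial\Omega)}\lesssim h^{1/2}$, and the compact embedding $H^1(\Omega)\hookrightarrow L^2(\partial\Omega)$ applied to $\Pi_h y_h,\Pi_h\nabla_h y_h$ furnishes the desired traces. The direct bounds $\|y_h-g\|_{L^2(\partial_D\Omega)}\lesssim h^{3/2}\|h^{-3/2}[y_h]\|_{L^2(\Gamma_h^b)}\to 0$ and $\|\nabla_h y_h-\Phi\|_{L^2(\partial_D\Omega)}\to 0$ then force $y=g$ and $\nabla y=\Phi$ on $\partial_D\Omega$, so $y\in\mbV(g,\Phi)$.

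Finally, since $\nabla_h y_h\to\nabla y$ strongly in $L^2(\Omega)$, $(\nabla_h y_h)^T\nabla_h y_h\to(\nabla y)^T\nabla y$ in $L^1(\Omega)$. For any $\phi\in C(\overline\Omega)$, writing $\phi=\bar\phi_T+(\phi-\bar\phi_T)$ elementwise yields
\[
\Big|\int_\Omega \phi\big[(\nabla_h y_h)^T\nabla_h y_h-I\big]\Big|\lesssim \|\phi\|_\infty\,\eps+\omega_\phi(h)\big(\|\nabla_h y_h\|_{L^2(\Omega)}^2+|\Omega|\big),
\]
where $\omega_\phi$ denotes the modulus of continuity of $\phi$. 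The right-hand side vanishes as $\hb,\eps\to 0$, so $(\nabla y)^T\nabla y=I$ a.e.\ and $y\in\mathbb{A}(g,\Phi)$. I expect the main technical obstacle to be the simultaneous distributional identification $\Psi=\nabla y$ and the boundary-trace identification, since $\Pi_h$ only delivers weak $H^1$-compactness for the interpolants; the explicit scaled trace estimates together with the jump and penalty controls are precisely what close the argument.
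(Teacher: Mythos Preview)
Your proposal is correct and follows essentially the same approach as the paper: equi-coercivity, Cl\'ement interpolation $\Pi_h$ applied to both $y_h$ and $\nabla_h y_h$ for compactness, identification $\Psi=\nabla y$ via elementwise integration by parts with jump control, boundary traces via Corollary~\ref{C:boundary} and compact trace embedding, and the discrete isometry defect passed to the limit. The only cosmetic difference is in Step~4: the paper bounds $\|(\nabla_h y_h)^T\nabla_h y_h-I\|_{L^1(\Omega)}$ directly by splitting each element into mean plus oscillation and controlling the oscillation via a Poincar\'e inequality and $\|D_h^2 y_h\|_{L^2}$, whereas you dualize and split the test function $\phi$ into its elementwise mean plus a remainder controlled by $\omega_\phi(h)$; both arguments are equivalent.
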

\begin{proof}
We proceed in several steps.

\smallskip\noindent
{\it Step 1: Weak convergence in $L^2$.}
We first use \eqref{E:equi-coerc} to deduce that
\rhn{$\|y_h\|_{H_h^2(\Omega)} \le C$}
for all $h$. We next employ the \rhn{
Friedrichs-type} inequality \eqref{e:H2typePoincare}
        to obtain \rhn{$\|y_h\|_\lom \le C$}
	for all $h$. Consequently, there exists a subsequence of $\{y_h\}_{h>0}$
        (not relabeled)	converging weakly in $[L^2(\Omega)]^3$ to some $y \in [\lom]^3$.
        We must show that $y\in\mbV(g,\Phi)$.

\medskip\noindent
{\it Step 2: $H^1$-regularity of $y$ and $L^2$-strong convergence.}
To prove that $y\in[\hom]^3$ we need to regularize $y_h$. To this end, we
employ the smoothing interpolation operator $\Pi_h$ defined in \Cref{S:interpolation}
and let \rhn{$z_h:=\Pi_hy_h\in[\mathring\mbV_h^k]^3 = [\mbV_h^k \cap H^1_0(\Omega)]^3$.}
In view of the stability bound \eqref{stability} and the Friedrichs-type estimate \eqref{e:H2typePoincare}, we find that
\begin{align*}
\| z_h \|_{\lom}+	\| \nabla z_h \|_{\lom}   &\lesssim \| y_h \|_{\lom} + \| \nabla_h y_h \|_{\lom} + \| h^{-1/2} [y_h]\|_{L^2(\Gamma_h^0)} \\
&\lesssim \rhn{\| y_h \|_{H_h^2(\Omega)}} + \| g\|_{H^1(\Omega)}  + \| \Phi \|_{H^1(\Omega)}
\rhn{\le C,}
\end{align*}
\rhn{because of equi-coercivity property \eqref{E:equi-coerc}. Since
$z_h$ is uniformly bounded in $H^1$, (a subsequence of)}
$z_h$ converges weakly to some 
$z \in [H^1(\Omega)]^3$ and strongly to $z$ in $[L^2(\Omega)]^3$ and
\rhn{$[L^2(\partial\Omega)]^3$. Properties \eqref{stability} and
\eqref{E:boundary} of $\Pi_h$ yield
\begin{equation*}
  \| y_h -  \Pi_h y_h\|_{\lom} + \| y_h -  \Pi_h y_h\|_{L^2(\partial\Omega)}\lesssim
  h^{\frac12} \left( \|  \nabla_h y_h \|_{\lom}
  + \| h^{-\frac12}[y_h]\|_{L^2(\Gamma_h^0)}\right)
  \lesssim h^{\frac12},
\end{equation*}
whence as $\hb\to 0$
\begin{gather*}
  \| y_h -  z\|_{\lom} \leq \| y_h -  \Pi_h y_h\|_{\lom} + \|z_h - z\|_{\lom}
  \to 0,
  \\
  \| y_h -  z\|_{L^2(\partial\Omega)} \leq \| y_h -  \Pi_h y_h\|_{L^2(\partial\Omega)}
  + \|z_h - z\|_{L^2(\partial\Omega)}
  \to 0.
\end{gather*}
}
The uniqueness of the weak $\lom$ limit
implies that $y=z\in[H^1(\Omega)]^3$
and that $y_h$ converges to $y$ strongly in $[L^2(\Omega)]^3$.
Regarding the \rhn{trace}, we first observe that
$$
\| h^{-3/2} (y_h-g) \|_{L^2(\partial_D\Omega)} = \|h^{-3/2}[y_h]\|_{L^2(\Gamma_h^b)}
\leq  \rhn{\|y_h\|_{H_h^2(\Omega)} \le C,}
$$
\rhn{according to \eqref{E:energynormN} and \eqref{E:equi-coerc}; hence
$\|y_h- g\|_{L^2(\partial_D \Omega)}\to0$ as $\hb\to0$. Since}
\[
z - g = (z-z_h) +  (\Pi_h y_h - y_h) + (y_h-g),
\]
and $\|\Pi_h y_h - y_h\|_{L^2(\partial_D\Omega)}\to0$ as $\hb\to0$,
in view of \Cref{C:boundary} (boundary error estimate) and \eqref{E:equi-coerc},
we infer that $y=z=g$ on $\partial_D\Omega$.

\medskip\noindent
{\it Step 3: $H^2$-regularity of $y$ and $H^1$-strong convergence.}
\rhn{This entails repeating} Step 2 with $\nabla_h y_h$. We thus define $Z_h:=\rhn{\Pi_h (\nabla_h y_h)} \in
[\mathring\mbV_h^k]^{3\times2}$. Applying again the stability bound \eqref{stability}
in conjunction with \rhn{\eqref{E:energynormN} and} \eqref{E:equi-coerc} gives the uniform bound
\[
\| Z_h \|_{\lom}+\|\nabla Z_h\|_{L^2(\Omega)} \lesssim \| \nabla_h y_h \|_{\lom}+ \|D^2_h y_h\|_{L^2(\Omega)}
+ \|h^{-1/2} [\nabla_h y_h]\|_{L^2(\Gamma_h^0)} \le C.
\]
Hence, \rhn{(a subsequence of)} $Z_h$ converges weakly in $[H^1(\Omega)]^{3\times2}$ and strongly in $[L^2(\Omega)]^{3\times2}$ \rhn{and $[L^2(\partial\Omega)]^{3\times2}$}
to a function $Z \in [H^1(\Omega)]^{3\times2}$. 
Moreover, an argument similar to Step 2, again relying on \Cref{C:boundary}, yields $Z=\Phi$ on $\partial_D\Omega$ and $\|\nabla_h y_h - Z\|_{L^2(\Omega)},
\rhn{\|\nabla_h y_h - Z\|_{L^2(\partial\Omega)}} \to0$ as $\hb\to0$.

It remains to show that $Z=\nabla y$, whence $y\in [H^2(\Omega)]^3$. For any test function $\phi\in [C_0^\infty(\Omega)]^{3\times 2}$, elementwise integration by parts leads to
\[
\int_{\Omega} \nabla_h y_h : \phi = - \int_\Omega y_h \cdot \textrm{div}\phi
+ \int_{\Gamma_h^0}  [y_h]\cdot \phi \, \mu.
\]
We next show that the last term tends to $0$ as $\hb\to0$. \rhn{Note that
\[
\left| \int_{\Gamma_h^0}[y_h]\cdot \phi \, \mu \right| \lesssim  \| h^{-1/2} [y_h]\|_{L^2(\Gamma_h^0)} \ \|h^{1/2} \phi\|_{L^2(\Gamma_h^0)} \to 0 \qquad \textrm{as }\hb\to0,
\]
because $\| h^{-1/2} [y_h]\|_{\LtG} \le \hb \|y_h\|_{H_h^2(\Omega)}\to0$ while $\|h^{1/2} \phi\|_{L^2(\Gamma_h^0)} \lesssim  |\Omega| \|\phi\|_{L^\infty(\Omega)} \leq C$.
This in turn implies that as $\hb\to0$
\[
\int_{\Omega} Z : \phi = - \int_\Omega y \cdot \textrm{div}\phi,
\]
or equivalently $Z=\nabla y$.
}

\medskip\noindent
{\it Step 4: Isometry constraint}. To show that $y$ satisfies \eqref{E:isometry}, we combine the discrete isometry defect \eqref{E:DIC} \rhn{with a bound for the broken Hessian $D_h^2 y_h$. The former controls the isometry constraint mean over elements, whereas the latter controls oscillations of $\nabla_h y_h$.} In fact, we prove
\begin{equation}\label{e:limit_constraint}
	\sum_{ T \in \mathcal{T}_h} \big \|(\nabla_h y_h)^T \nabla_h y_h - I \big\|_{L^1(T)} \lesssim \hb + \eps.
\end{equation}
Let \rhn{$M_T[y_h]\in\mathbb{R}^{2 \times 2}$ denote the {\it mean}} of the isometry constraint over $T\in\mathcal{T}_h$
\[
\rhn{M_T[y_h]} := |T|^{-1} \int_T \Big(\isomh - I \Big),
\]
and write
\[
\big\| \isomh -I \big\|_{L^1(T)} \leq \big\| (\isomh -I ) - \rhn{M_T[y_h]}\big\|_{L^1(T)}
+ \big\|  \rhn{M_T[y_h]} \big\|_{L^1(T)}.
\]
For the second term, \rhn{we use that $y_h\in\Ahk(g,\Phi)$, defined in \eqref{E:Ahk}),
to deduce that}
\[
\rhn{\big\|  M_T[y_h] \big\|_{L^1(T)} = |T| \, |M_T[y_h]|
\quad\Rightarrow\quad
\sum_{T\in\mathcal{T}_h} |T| \, |M_T[y_h]| = D_h[y_h] \le \eps.
}
\]
For the first term, instead, we apply \rhn{the Poincar\'e} inequality in $L^1(T)$ to obtain 
\begin{align*}
  \big\| (\isomh -I) - \rhn{M_T[y_h]} \big\|_{L^1(T)} &\lesssim \rhn{h_T} \ \big\|\nabla_h\big(\isomh \big) \big\|_{L^1(T)}
  \\
  & \lesssim \rhn{h_T} \ \|D_h^2y_h\|_{L^2(T)} \|\nabla_h y_h\|_{L^2(T)}.
\end{align*}
Summing over $T\in\mathcal{T}_h$ and using the Friedrichs-type inequality \eqref{e:H2typePoincare}, together with Lemma~\ref{L:Coercivity} (coercivity of $E_h[y_h]$) and the uniform bound on \rhn{$E_h[y_h]\le\Lambda$,} we get
\begin{align*}
\sumT \big\| (\isomh -I ) - \rhn{M_T[y_h]} \big\|_{L^1(T)} 
&\lesssim \rhn{\hb} \  \rhn{\|D_h^2y_h\|_{L^2(\Omega)} \|\nabla_h y_h\|_{L^2(\Omega)}
\lesssim \hb,}
\end{align*}
\rhn{where the hidden constant depends on $\Lambda$ and data $(g,\Phi,f)$. This
  shows \eqref{e:limit_constraint}.}

With this at hand, we now show that $\|\nabla y ^T \nabla y - I \|_{L^1(\Omega)}=0$.
We observe
\[
\nabla y ^T \nabla y - I = \nabla y ^T (\nabla y - \nabla_h y_h)
+ (\nabla y^T - (\nabla_h y_h) ^T ) \nabla_h y_h + (\nabla_h y_h)^T \nabla_h y_h - I,
\]
which implies
\[
\|\nabla y ^T \nabla y - I \|_{L^1(\Omega)} \lesssim
\Big(\|\nabla y \|_\lom + \|\nabla_h y_h \|_\lom \Big) \|\nabla_h y_h - \nabla y\|_\lom
+ \rhn{\hb} + \eps \to 0
\]
as $\hb, \eps \to0$,
upon recalling the strong convergence of $\nabla_h y_h$ to $\nabla y$ in $[\lom]^{3 \times 2 \times 2}$ from Step 3 and the uniform bound of \rhn{$\|\nabla_hy_h\|_{L^2(\Omega)}$ from \eqref{e:H2typePoincare}.}
%
The proof is thus complete.
\end{proof}

\subsection{Lim-inf property of $E_h$}\label{S:lim-inf}

The lim-inf property easily follows from the preceding results, but we prove it here \rhn{for completeness.} Instead of looking at a general sequence $\{ y_h \}_{h>0}$ with $y_h \to y$ in $[\lom]^3$, we assume $E_h[y_h] \leq \Lambda$ for all $h$ and uniform constant $\Lambda>0$, since otherwise \rhn{$\liminf_{\hb\to0} E_h[y_h]=\infty$ and} the lim-inf inequality is trivial.

\begin{Proposition}[lim-inf property] \label{P:LimInf}
\rhn{Let data $(g,\Phi,f)$ satisfy \eqref{E:data}.}  
Let the penalty parameters $\gamma_0$ and $\gamma_1$ in \eqref{E:DEn}
be chosen sufficiently large, and let the discrete isometry defect 
parameter \rhn{$\eps = \eps(\hb) \to0$ as $\hb\to 0$.}
Let \rhn{$y_h \in \Ahk(g,\Phi)$} and
$E_h[y_h] \leq \Lambda$ for all $h$ with $\Lambda$ independent of $h$. Then,
there exists \rhn{$y \in \mathbb{A}(g,\Phi)$} such that $y_h \to y$ in $[\lom]^3$,
\rhn{$\nabla_h y_h\to\nabla y$ in $[L^2(\Omega)]^{3\times2}$,
$y_h\to y$ in $[L^2(\partial\Omega)]^3$ and
$\nabla_h y_h \to \nabla y$ in $[L^2(\partial\Omega)]^{2\times3}$ as $\hb\to0$} and 
	$$
	E[y] \leq \liminf_{\rhn{\hb \rightarrow 0} } E_h[y_h].
	$$
\end{Proposition}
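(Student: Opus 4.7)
The plan is to combine the compactness result already established with the Hessian reconstruction tools developed earlier. Given a sequence $y_h \in \Ahk(g,\Phi)$ with $E_h[y_h] \le \Lambda$, the first step is to invoke Proposition \ref{P:L2conv} (compactness in $\lom$) to extract a subsequence (not relabeled) and a limit $y \in \mathbb{A}(g,\Phi)$ such that $y_h \to y$ in $[L^2(\Omega)]^3$ and $\nabla_h y_h \to \nabla y$ in $[L^2(\Omega)]^{3\times 2}$, together with the analogous boundary convergences on $\partial\Omega$. The uniform energy bound, combined with Lemma \ref{L:Coercivity} (coercivity of $E_h$), simultaneously delivers the uniform control $\|y_h\|_{H_h^2(\Omega)}^2 \lesssim \Lambda + C_F^* R(g,\Phi,f)$ that will be needed below.

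The second step is to exploit Proposition \ref{P:HhIneq}, which for sufficiently large $\gamma_0,\gamma_1$ asserts
\[
\tfrac{1}{2}\|H_h[y_h]\|_{L^2(\Omega)}^2 - (f,y_h)_{L^2(\Omega)} \le E_h[y_h].
\]
Strong convergence $y_h \to y$ in $[L^2(\Omega)]^3$ gives $(f,y_h)_{L^2(\Omega)} \to (f,y)_{L^2(\Omega)}$. For the Hessian term, I would apply Proposition \ref{p:liminf_H} (lim-inf property of the reconstructed Hessian): its three hypotheses---the uniform $H_h^2$-bound and the interior/boundary $L^2$-convergences of $y_h$ and $\nabla_h y_h$---are precisely what Step 1 delivers. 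This yields
\[
\|D^2 y\|_{L^2(\Omega)}^2 \le \liminf_{\hb \to 0} \|H_h[y_h]\|_{L^2(\Omega)}^2.
\]

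The final step is to take the liminf in the inequality of Proposition \ref{P:HhIneq} and combine the two passages to the limit, obtaining
\[
\tfrac{1}{2}\|D^2 y\|_{L^2(\Omega)}^2 - (f,y)_{L^2(\Omega)} \le \liminf_{\hb \to 0} E_h[y_h].
\]
Since $y \in \mathbb{A}(g,\Phi)$ is an isometric deformation satisfying the prescribed Dirichlet data, the identity \eqref{e:bilaplacian_energy} identifies the left-hand side with $E[y]$, concluding the argument. I do not expect any real obstacle at this stage: the heavy lifting was carried out in Propositions \ref{P:HhIneq}, \ref{p:liminf_H}, and \ref{P:L2conv}, so the present statement amounts to a short assembly. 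The only point requiring care is to ensure the threshold on $\gamma_0,\gamma_1$ is taken large enough so that coercivity, compactness, and the Hessian inequality hold simultaneously with the same penalty parameters.
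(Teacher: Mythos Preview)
Your proposal is correct and follows essentially the same route as the paper: invoke Proposition~\ref{P:L2conv} for compactness and the limit $y\in\mathbb{A}(g,\Phi)$, use coercivity for the uniform $H_h^2$ bound, apply Proposition~\ref{p:liminf_H} to obtain $\|D^2 y\|_{L^2(\Omega)}\le\liminf\|H_h[y_h]\|_{L^2(\Omega)}$, and combine this with Proposition~\ref{P:HhIneq} and the convergence of the forcing term. The paper's proof is exactly this assembly, with only cosmetic differences in the order of presentation.
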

\begin{proof}
\rhn{Since $y_h \in \Ahk(g,\Phi)$ and $E_h[y_h] \leq \Lambda$, we invoke \protect \MakeUppercase {P}roposition\nobreakspace \ref {P:L2conv} (compactness in $\lom$)
to get $y\in \mathbb{A}(g,\Phi)$, namely $y$ satisfies the Dirichlet
boundary conditions on $\partial_D\Omega$ and the isometry constraint
\eqref{E:isometry}, whereas $y_h$ satisfies
$y_h \rightarrow y$ in $[L^2(\Omega)]^3$,
$\nabla_h y_h\to\nabla y$ in $[L^2(\Omega)]^{3\times2}$,
$y_h\to y$ in $[L^2(\partial\Omega)]^3$ and
$\nabla_h y_h \to \nabla y$ in $[L^2(\partial\Omega)]^{2\times3}$
as $\hb,\eps\to0$.
	Furthermore, the equi-coercivity bound \eqref{E:equi-coerc} in conjunction with Proposition~\ref{p:liminf_H} ($\liminf$ for the reconstructed Hessian) guarantees that 
	$$
	\| D^2 y \|_{L^2(\Omega)} \leq \liminf_{h\to 0} \| H_h[y_h] \|_{L^2(\Omega)}.
	$$
	This, together with $(f,y_h)_\lom \to (f,y)_\lom$ as \rhn{$\hb\to0$}, yields
	$$
	\frac{1}{2} \int_{\Omega} |D^2y|^2  
	- \int_{\Omega} f \cdot y \leq \frac{1}{2} \liminf_{\rhn{\hb \rightarrow 0}} 
	\left(\int_{\Omega} |H_h( y_h)|^2 - \int_{\Omega} f \cdot y_h\right).
	$$
	}
	We next employ Proposition \ref{P:HhIneq} \rhn{(relation between $E_h[y_h]$ and $H_h[y_h]$),} i.e.  
	$$
	\frac{1}{2} \int_{\Omega} |H_h( y_h)|^2 - \int_{\Omega} f \cdot y_h \leq E_h[y_h],
	$$
	and combine the \rhn{last} two inequalities to deduce the asserted estimate.
\end{proof}

\subsection{Lim-sup property of $E_h$}\label{ss:limsup}

Since we are interested in minimizers of $E$ within the admissible set
\rhn{$\mathbb{A}(g,\Phi)$, to show the lim-sup property we construct a recovery sequence
in $[\lom]^3$ for a function $y \in\mathbb{A}(g,\Phi)$} via regularization and
Lagrange interpolation.
Since the isometry and Dirichlet boundary contraints are relaxed
via \eqref{E:DIC} and the Nitsche's approach, we do not need to preserve them
in the regularization and interpolation procedures. 
This extra flexibility is an improvement over \cite{Bartels,BaBoNo},
where the lim-sup property
is proven under the assumption that both procedures preserve those
constraints \rhn{at the nodes} using an intricate approximation argument by P. Hornung
\cite{Hornung}.

\begin{Proposition}[lim-sup property] \label{P:LimSup} \rhn{If the
parameters $\gamma_0$ and $\gamma_1$ in \rhn{\eqref{E:def-ah}} are sufficiently large,
then for any $y \in \mathbb{A}(g,\Phi)$,} there 
exists a recovery sequence $\left\{ y_h\right\}_h \subset \rhn{\Ahk(g,\Phi)}\cap[H^1(\Omega)]^{3\times2}$
with  \rhn{$\|y_h\|_{H_h^2(\Omega)} \lesssim \|y\|_{H^2(\Omega)}$ uniformly in} $h$ such that \looseness=-1
	$$
y_h \to y \quad \text{in} \quad [\lom]^3,
\qquad
\rhn{\nabla y_h \to \nabla y \quad \text{in} \quad [L^2(\Omega)]^{3\times2}
\qquad \text{as } h\to0,}  
	$$
	and
	$$
	\limsup_{\rhn{\hb, \eps \rightarrow 0}} E_h[y_h] \le E[y],
	$$
        provided \rhn{$\hb\le1$ and $\eps:=\eps(h) \ge \eps_1$,} where
\begin{equation}\label{E:delta1}
  \rhn{\eps_1 := C \hb \|y\|_{H^2(\Omega)}^2}
\end{equation}
and $C$ is an interpolation constant that depends on the shape regularity
of $\{\mathcal{T}_h\}_h$.
\end{Proposition}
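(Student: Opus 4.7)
The plan is to construct the recovery sequence as $y_h := \mathcal{I}_h y^\sigma \in [\mathring{\mathbb{V}}_h^k]^3$, where $y^\sigma \in [C^\infty(\overline\Omega)]^3$ is a smooth regularization of $y \in \mathbb{A}(g,\Phi)$ obtained by $H^2$-extension of $y$ to $\mathbb{R}^2$ followed by convolution with a mollifier at scale $\sigma$, and $\mathcal{I}_h$ denotes componentwise Lagrange interpolation onto $[\mathring{\mathbb{V}}_h^k]^3 \subset [\mathbb{V}_h^k \cap H^1(\Omega)]^3$. A diagonal argument will couple $\sigma = \sigma(h) \to 0$ to $h \to 0$. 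The essential simplification is that $y_h$ is globally continuous, so every interior jump $[y_h]_e$ and $[\nabla_h y_h]_e$ for $e \in \Ei$ vanishes, and only boundary edges in $\Eb$ contribute to $a_h(y_h,y_h)$ and to $\|y_h\|_{H^2_h(\Omega)}$. This is precisely the flexibility over the Hornung-type approximation of \cite{Bartels,BaBoNo} alluded to before the statement. Standard mollification delivers $y^\sigma \to y$ in $[H^2(\Omega)]^3$ together with the inverse-type scaling $\|y^\sigma\|_{H^{k+1}(\Omega)} \lesssim \sigma^{-(k-1)}\|y\|_{H^2(\Omega)}$, and classical interpolation estimates (together with \Cref{L:error-curved-quad} in the isoparametric case) give $\|D^j(y^\sigma - y_h)\|_{L^2(T)} \lesssim h_T^{k+1-j}\|y^\sigma\|_{H^{k+1}(T)}$ for $j=0,1,2$, with corresponding scaled trace bounds on $\partial T$.

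To verify $y_h \in \Ahk(g,\Phi)$, I would use $(\nabla y)^T \nabla y = I$ a.e.\ and apply Cauchy--Schwarz elementwise to bound
\[
D_h[y_h] \le \|\nabla_h y_h - \nabla y\|_{L^2(\Omega)}\big(\|\nabla_h y_h\|_{L^2(\Omega)} + \|\nabla y\|_{L^2(\Omega)}\big).
\]
Decomposing $\nabla_h y_h - \nabla y = \nabla(y_h - y^\sigma) + \nabla(y^\sigma - y)$ and bounding by $h^k \sigma^{-(k-1)} \|y\|_{H^2}$ and $\sigma \|y\|_{H^2}$ respectively, the choice $\sigma \sim h$ yields $\|\nabla_h y_h - \nabla y\|_{L^2(\Omega)} \lesssim h \|y\|_{H^2}$, hence $D_h[y_h] \le Ch\|y\|_{H^2}^2 = \eps_1 \le \eps$. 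Stability $\|y_h\|_{H^2_h(\Omega)} \lesssim \|y\|_{H^2}$ follows from the bulk interpolation bound plus trace estimates for the boundary jumps, using $y=g,\nabla y=\Phi$ on $\partial_D\Omega$. For the lim-sup itself, the bulk part $\tfrac12\|D_h^2 y_h\|_{L^2(\Omega)}^2 - (f,y_h)_{\lom}$ converges to $E[y]$ via the interpolation bounds and $L^2$-strong convergence $y_h \to y$. Because $y_h \in [H^1(\Omega)]^3$, all consistency and penalty contributions of $a_h(y_h,y_h)$ collapse to boundary integrals on $\Gamma_h^b$ involving $y_h - g$ and $\nabla_h y_h - \Phi$, which a careful diagonal choice $\sigma=\sigma(h)$ and scaled trace interpolation estimates drive to zero.

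The main obstacle is balancing $\sigma(h)$ so as to secure \emph{simultaneously} the $O(h)$ isometry defect and the vanishing of the Nitsche boundary penalty
\[
\gamma_0 \big\|h^{-3/2}(y_h - g)\big\|_{L^2(\Gamma_h^b)}^2 + \gamma_1 \big\|h^{-1/2}(\nabla_h y_h - \Phi)\big\|_{L^2(\Gamma_h^b)}^2.
\]
A naive mollification with $\sigma \sim h$ controls the isometry defect but leaves an $O(1)$ penalty contribution, because the trace convergence $y^\sigma|_{\partial_D\Omega} \to g$ in $H^{3/2}$ is only qualitative for generic $y \in [H^2(\Omega)]^3$. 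I would resolve this by using a regularization respecting the Dirichlet data: split $y = G + y_0$ with $G$ a fixed $H^2$-extension of $(g,\Phi)$ into $\Omega$ and $y_0 \in [H^2(\Omega)]^3$ having $y_0 = 0$, $\nabla y_0 = 0$ on $\partial_D\Omega$; then mollify $y_0$ after extension by zero across $\partial_D\Omega$. This forces $y_h = g$ and $\nabla_h y_h = \Phi$ at the Lagrange boundary nodes and delivers trace decay rates $o(h^{3/2})$ and $o(h^{1/2})$ of $y_h - g$ and $\nabla_h y_h - \Phi$ on $\partial_D\Omega$, producing vanishing penalty while preserving $D_h[y_h] \le \eps_1$. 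Combining all these pieces yields the asserted $\limsup_{h,\eps \to 0} E_h[y_h] \le E[y]$.
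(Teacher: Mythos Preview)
Your proposal contains a genuine error that propagates through the rest of the argument. You claim that because $y_h = \mathcal{I}_h y^\sigma \in [\mathring{\mathbb{V}}_h^k]^3$ is globally continuous, both $[y_h]_e$ and $[\nabla_h y_h]_e$ vanish on interior edges. Only the first is true: $y_h \in C^0(\overline\Omega)$ but $\nabla_h y_h$ is merely piecewise polynomial, so $[\nabla_h y_h]_e \ne 0$ for $e \in \Ei$ in general. The interior gradient-jump penalty $\gamma_1\|h^{-1/2}[\nabla_h y_h]\|_{L^2(\Gamma_h^0)}^2$ and the associated consistency terms therefore do \emph{not} collapse to boundary integrals, and they must be shown to vanish. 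Once you try to do this with $y_h = \mathcal{I}_h y^\sigma$, the diagonal choice $\sigma \sim h$ that you need for $D_h[y_h] = O(h)$ gives only
\[
\|h^{-1/2}[\nabla_h y_h]\|_{L^2(\Gamma_h^0)} \lesssim h^{k-1}\|y^\sigma\|_{H^{k+1}(\Omega)} \lesssim (h/\sigma)^{k-1}\|y\|_{H^2(\Omega)} = O(1),
\]
which is bounded but not $o(1)$. Taking $\sigma \gg h$ fixes this but destroys the $O(h)$ isometry defect required by $\eps_1 = Ch\|y\|_{H^2}^2$. The same tension afflicts the boundary penalty, and your proposed fix (mollifying the zero extension of $y_0$) does not resolve it: mollification of a zero extension does \emph{not} force exact boundary values at Lagrange nodes, and the extension part $G$ is itself only $H^2$, so you are back to the same difficulty.

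The paper avoids all of this by a simpler construction: take $y_h := I_h y$ directly, with no regularization in the definition of the recovery sequence. This is well defined because $H^2(\Omega) \subset C^0(\overline\Omega)$ in two dimensions, and the $H^2$-stability Lemma~\ref{L:H2-stab} gives $\|D_h^2 I_h y\|_{L^2(T)} \lesssim |y|_{H^2(T)}$ even though $y$ is only $H^2$. The boundary jumps then become pure interpolation errors, $[y_h]_e = I_h y - y$ and $[\nabla_h y_h]_e = \nabla(I_h y - y)$ on $\Eb$ (since $y = g$, $\nabla y = \Phi$ there), and the interior gradient jumps are handled the same way. The isometry defect follows immediately from \eqref{E:BH-T}. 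Regularization $y^\sigma$ enters only at the very end, as a density device to prove $\|D_h^2(y - I_h y)\|_{L^2(\Omega)} \to 0$ and $\|h^{-1/2}[\nabla_h y_h]\|_{L^2(\Gamma_h)} \to 0$: one writes $y - I_h y = (y - y^\sigma) - I_h(y - y^\sigma) + (y^\sigma - I_h y^\sigma)$, bounds the first piece by $|y - y^\sigma|_{H^2}$ via $H^2$-stability and the second by $h\|y^\sigma\|_{H^3}$ via higher-order interpolation ($k \ge 2$), and chooses first $\sigma$ then $h$. Because $y_h$ itself does not depend on $\sigma$, there is no diagonal balancing to perform.
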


\begin{proof}
We proceed in several steps.

\smallskip\noindent
\textit{Step 1: Recovery sequence.} Let $\Lambda:=\|y\|_\htwom$ and
let $y_h:=I_h y \in [\mathring\mbV_h^k]^3=[\Vhk \cap H^1(\Omega)]^3$ be the standard Lagrange interpolant of $y$, which is well-defined in $\mathbb{R}^2$ because $H^2(\Omega)\subset C^0(\overline{\Omega})$. \rhn{We first resort to the local estimate \eqref{E:H2-stab}, namely
\begin{equation}\label{E:D2y}
  \|D_h^2 y_h\|_{L^2(T)} \lesssim |y|_{H^2(T)} \quad\forall \, T\in\mathcal{T}_h,
\end{equation}
which is valid for isoparametric maps.
Moreover, $\|[y_h]\|_{L^2(e)} = 0$ for all $e\in\Ei$ because
$y_h\in C^0(\overline\Omega)$, whereas combining a trace
inequality with \eqref{E:BH-T} yields
\[
\|[y_h]\|_{L^2(e)} = \|I_h y-y\|_{L^2(e)}
\lesssim h_e^{3/2} |y|_{H^2(\omega(e))}
\quad\forall \, e\in\Eb,
\]
because $y=g$ on $\partial_D\Omega$ and \eqref{E:bd-jumps}. Similarly, for all
$e\in\Ei$} we have
\[
\big\|[\nabla y_h]\big\|_{L^2(e)} \le \big\|\nabla (I_h y^+ - y)] \big\|_{L^2(e)} +
 \big \|\nabla (I_h y^- - y)] \big\|_{L^2(e)} \lesssim h_e^{1/2} \rhn{|y|_{H^2(\omega(e))},}
\]
\rhn{where $y^\pm=y|_{T^\pm}$ and $T^\pm$ are adjacent elements such that
  $e=T^+\cap T^-$ and $\omega(e)=T^+ \cup T^-$. Moreover, for $e \in \Eb$ we have
  $[\nabla y_h]_e = \nabla (I_h y - y)$ according to \eqref{E:bd-jumps}, because
  $\nabla y = \Phi$ on $\partial_D\Omega$, and we proceed as before.
Therefore, we infer that
\[
\|y_h\|_{H_h^2(\Omega)} \lesssim |y|_{H^2(\Omega)}
  \quad\Rightarrow\quad
E_h[y_h] \lesssim \Lambda^2 + R(g,\Phi,f),
\]
where the last inequality is a consequence of the upper bound in \eqref{E:lower-bound}.
}
  
\medskip\noindent
\textit{Step 2: Discrete isometry defect}.
We claim that $y_h$ satisfies \eqref{E:DIC} for $\eps\ge\eps_1$. Since 
 $$
	\begin{aligned}
	\|\nabla y_h\|_{L^2(T)} &\leq \| \nabla y\|_{L^2(T)} + \| \nabla (y_h-y)\|_{L^2(T)} 
	\leq \|\nabla y\|_{L^2(T)} + C \rhn{h_T  |y|_{H^2(T)},}
	\end{aligned}
$$
	\rhn{in light of \eqref{E:BH-T}, adding and substracting $y$ and recalling
          $\nabla y^T \nabla y = I$ we obtain
        \[
        \nabla y_h^T \nabla y_h - I = \nabla (y_h-y)^T \nabla y_h + \nabla y^T \nabla (y_h-y),
        \]
        whence $D_h[y_h]$ defined in \eqref{E:DIC} satisfies for $\hb\le 1$}	
 \begin{align*}
   D_h[y_h] & \le  \sum_{ T \in \mathcal{T}_h} 
    \Big( \|\nabla y_h\|_{L^2(T)} + \|\nabla y\|_{L^2(T)} \Big) \|\nabla (y-y_h)\|_{L^2(T)}
   \\
&\lesssim \sum_{ T \in \mathcal{T}_h} h_T \|\nabla y\|_{L^2(T)} \, \rhn{|y|_{H^2(T)}} + \sum_{ T \in \mathcal{T}_h} h_T^2 \rhn{|y|_{H^2(T)}^2} \\
   & \leq \rhn{\hb \| \nabla y \|_{\lom} | y |_{H^2(\Omega)} + \hb^2 | y |_{H^2(\Omega)}^2 \leq
   2\Lambda^2 \hb.}
\end{align*}
 Therefore, if \rhn{$\eps_1 = C \hb \Lambda^2 =  C \hb \|y\|_{H^2(\Omega)}^2$,} we
 deduce that $y_h \in \rhn{\Ahk(g,\Phi)}$ for $\eps\ge\eps_1$.

 \medskip\noindent
 \textit{Step 3: Convergence of $E_h[y_h]$}. It remains to show that
 as \rhn{$\hb,\eps\to0$}
	$$
	E_h[y_h]  \to E[y],
	$$
 \rhn{with $\eps\ge\eps_1$.}
 We focus on two critical terms in $E_h[y_h]$ in \eqref{E:DEn}, namely
 \[
 \|D_h^2y_h\|_\lom \to \| D^2 y \|_{\lom},
 \quad
 \| h^{-1/2} \left[ \nabla_h y_h \right] \|_{L^2(\Gamma_h)} \to 0,
 \]
 because similar arguments yield
 $\| h^{-3/2}  \left[ y_h \right] \|_{L^2(\Gamma_h)} \to 0$
 as well as convergence of the remaining terms in $E_h[y_h]$
 \rhn{upon proceeding as in Lemma \ref {L:Coercivity}} (coercivity of $E_h$).
	
 Since $y$ is merely in $[H^2(\Omega)]^3$, we argue by density. Let $\{ \Ye \}_{\sigma>0} \subset [C^\infty(\mathbb R^2)]^3$ be a sequence of regularizations of $y$ such that $\Ye \to y$ in $[H^2(\Omega)]^3$ as $\sigma \to 0$, and let 
 $\rhn{\Ye_h = I_h \Ye} \in [\mathring\mbV_h^k]^3$ be the Lagrange interpolant of $\Ye$.
 \rhn{Estimate \eqref{E:D2y} implies
\[
\| D^2( y_h -  \Ye_h )\|_{L^2(T)} \lesssim |y-\Ye|_{H^2(T)},
\]
whence writing $y-y_h=(y-\Ye) - I_h(y-\Ye) + (\Ye - \Ye_h)$ gives
\begin{align*}
  h_T^{-1} \|\nabla(y-y_h)\|_{L^2(T)} + \|D^2(y-y_h)\|_{L^2(T)}
  \lesssim  | y -\Ye |_{H^2(T)} +  h_T  \|\Ye \|_{H^3(T)},
\end{align*}
because the polynomial degree is $k\ge2$.
This can be made arbitrarily small upon choosing first the coarse scale
$\sigma$ and next $\hb$, and 
implies $\|D^2_h y_h\|_{L^2(\Omega)} \to \|D^2 y\|_{L^2(\Omega)}$ as $\hb\to0$
by the triangle inequality.}
	
We finally examine the stabilization term $\| h^{-1/2} \left[ \nabla_h y_h \right] \|_{L^2(\Gamma_h)}$. We recall that $[\nabla y]=0$ for all interior edges $e\in\mathcal{E}_h^0$ and
$\nabla y = \Phi$ for boundary edges $e\in\mathcal{E}_h^b$. We next utilize the scaled trace inequality \rhn{in conjunction with \eqref{E:bd-jumps} to write
\begin{align*}
  \big\| h^{-1/2} [\nabla_h y_h] \big\|_{L^2(e)} &= \big\| h^{-1/2} [\nabla_h (y - y_h)] \big\|_{L^2(e)}
  \\
& \lesssim h_e^{-1} \|\nabla_h(y-y_h)\|_{L^2(\omega(e))} + \| D^2_h (y - y_h) \|_{L^2(\omega(e))},
\end{align*}
whence $\| h^{-1/2} \left[ \nabla_h y_h \right] \|_{L^2(\Gamma_h)}\to0$ as $\hb\to0$. This concludes the proof.}
\end{proof}

\rhn{We point out the occurrence of the $H^2$-seminorm on the right-hand side of \eqref{E:D2y} for both affine equivalent and isoparametric elements (including subdivisions made of quadrilaterals). We present a proof in Section \ref{S:A-quads} and refer to \cite{CiarletRaviart} and Chapter 13 of \cite{ErnGNew} for a comprehensive discussion of isoparametric elements.}

\subsection{Convergence of global minimizers}\label{S:G-convergence}

 We now show that cluster points of global minimizers of $E_h$ are global minimizers of $E$, without assuming the existence of the latter. 
 The proof combines \rhn{Propositions \ref{P:L2conv} (compactness in $\lom$),
   \ref{P:LimInf} (lim-inf property) and \ref{P:LimSup} (lim-sup property).
 
We now collect properties of the {\it nonconvex} discrete admissible set
$\Ahk(g,\Phi)$ of \eqref{E:Ahk}, the discrete counterpart of the nonconvex
 set $\mathbb{A}(g,\Phi)$ of
\eqref{E:ContSpace}. Given an initial guess $y_h^0\in\Vhk(g,\Phi)$ with
isometry defect $D_h[y_h^0]\le\tau$ and energy $E_h[y_h^0]\le\Lambda_0$
independently of $h$, Lemma \ref{L:GFlowDIC} (discrete isometry defect) guarantees
that the discrete $H^2$-gradient flow
produces a sequence $\{y_h^n\}_n\subset\Ahk(g,\Phi)$ with $\eps\ge\eps_0$
and
 \[
\eps_0 = \Big(1 + C_F^2 \big( \Lambda_0 + C_F^* R(g,\Phi,f) \big) \Big) \tau.
\]
This shows that $\Ahk(g,\Phi)$ is non-empty provided $\eps\ge\eps_0$ and that
$\eps_0$ can be made arbitrarily small as $\tau\to0$.
We could take $\tau$ proportional to $\hb$, which is the choice in
Section \ref{S:NumExPl}. Moreover, Lemma \ref{L:energydecrease} (energy decay)
shows that $E_h[y_h]\le \Lambda_0$ whereas Lemma \ref{L:Coercivity} (coercivity of $E_h$) gives
\[
\|y_h\|_{H_h^2(\Omega)}^2 \le \Lambda_0 + C_F^* R(g,\Phi,f).
\]
This is the setting for $\Gamma$-convergence and is discussed next.}

\begin{Theorem}[convergence of global minimizers] \label{T:Gcon}
Let data $(g,\Phi,f)$ satisfy \eqref{E:data}.
Let the penalty parameters $\gamma_0$ and $\gamma_1$ in \rhn{\eqref{E:def-ah}}
be chosen sufficiently large. Let \rhn{$y_h\in\Ahk(g,\Phi)$} be a sequence of
functions such that $E_h[y_h] \leq \Lambda_0$, for a constant
$\Lambda_0$ independent of $h$, \rhn{and let} the discrete isometry
defect parameter $\eps$ satisfy
\[
\eps \ge \eps_1 := C \big(\Lambda_0+R(g,\Phi,f) \big) \hb,
\]
where $C$ is an interpolation
constant depending on the shape regularity of $\{\mathcal{T}_h\}$.
If \rhn{$y_h \in \Ahk(g,\Phi)$} is an almost global minimizer of $E_h$
\rhn{in the sense that}
$$
	E_h[y_h] \leq \inf_{\rhn{w_h \in \mathbb{A}_{h,\eps}^k(g,\Phi)}} E_h[w_h] + \sigma,
$$
where $\sigma, \eps \rightarrow 0$ as $\hb \rightarrow 0$,
then $\left\{ y_h \right\}_h$ is precompact in $[\lom]^3$ 
and every cluster point $y$ of $y_h$ belongs to \rhn{$\mathbb{A}(g,\Phi)$}
and is a global minimizer of $E$, namely 
$
	E[y] = \inf_{w \in \rhn{\mathbb A(g,\Phi)}}E[w].
$
Moreover, up to a subsequence (not relabeled), the energies converge
$$
	\lim_{\hb \rightarrow 0}E_h[y_h]=E[y].
$$
\end{Theorem}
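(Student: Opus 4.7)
The plan is a standard three-ingredient $\Gamma$-convergence argument assembling Proposition \ref{P:L2conv} (compactness in $\lom$), Proposition \ref{P:LimInf} (lim-inf property), and Proposition \ref{P:LimSup} (lim-sup property). Since the uniform bound $E_h[y_h]\le\Lambda_0$ is assumed, all the hypotheses of these propositions are at our disposal. I proceed in three logical steps, corresponding to precompactness, minimality, and energy convergence.

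First, I would apply Proposition \ref{P:L2conv} directly to the sequence $\{y_h\}\subset\Ahk(g,\Phi)$: from the uniform energy bound we extract a (not relabeled) subsequence and a limit $y\in\mathbb{A}(g,\Phi)$ such that $y_h\to y$ in $[\lom]^3$, $\nabla_h y_h\to\nabla y$ in $[\lom]^{3\times2}$, and both convergences hold on $\partial\Omega$ as well. Invoking Proposition \ref{P:LimInf} on this subsequence then yields
\[
E[y]\le\liminf_{\hb\to0}E_h[y_h].
\]
This gives one side of the energy identity and shows $y\in\mathbb{A}(g,\Phi)$.

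Next, I would test global minimality by comparing $E[y]$ with $E[w]$ for an arbitrary $w\in\mathbb{A}(g,\Phi)$. The key point is to verify that Proposition \ref{P:LimSup} produces a recovery sequence $\{w_h\}$ that is actually admissible for the almost-minimizer inequality, i.e.\ $w_h\in\Ahk(g,\Phi)$ for the specific $\eps$ in the statement. The lim-sup proposition requires $\eps\ge C\hb\|w\|_{H^2(\Omega)}^2$, whereas the theorem provides $\eps\ge\eps_1=C(\Lambda_0+R(g,\Phi,f))\hb$. I would dichotomize: either $E[w]>\Lambda_0$, in which case $E[y]\le\liminf E_h[y_h]\le\Lambda_0<E[w]$ and there is nothing to show; or $E[w]\le\Lambda_0$, in which case Lemma \ref{L:ECoercivity} (coercivity of $E$) gives $\|w\|_{H^2(\Omega)}^2\lesssim\Lambda_0+R(g,\Phi,f)$, so the recovery sequence satisfies the discrete isometry constraint for the prescribed $\eps$. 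Then the almost-minimizer hypothesis yields $E_h[y_h]\le E_h[w_h]+\sigma$, and taking $\limsup_{\hb\to0}$, combined with Proposition \ref{P:LimSup} and $\sigma\to0$, delivers
\[
\limsup_{\hb\to0}E_h[y_h]\le E[w].
\]
Chaining with the lim-inf inequality and using that $w\in\mathbb{A}(g,\Phi)$ is arbitrary produces $E[y]\le E[w]$ for all admissible $w$, which is the global minimality of $y$.

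Finally, to obtain convergence of energies, I specialize $w=y$ in the previous bound to deduce $\limsup_{\hb\to0}E_h[y_h]\le E[y]\le\liminf_{\hb\to0}E_h[y_h]$, which forces the limit to exist and equal $E[y]$. The main obstacle, and the step that requires care, is the compatibility between the $\eps$-threshold needed to house the recovery sequence in $\Ahk(g,\Phi)$ and the $\eps$ prescribed in the theorem; this is precisely resolved by the dichotomy on $E[w]$ together with Lemma \ref{L:ECoercivity}. The remaining conclusions are then routine consequences of the preceding three propositions.
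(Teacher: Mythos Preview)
Your proof is correct and follows essentially the same $\Gamma$-convergence architecture as the paper: compactness via Proposition~\ref{P:L2conv}, lower bound via Proposition~\ref{P:LimInf}, and upper bound via Proposition~\ref{P:LimSup}, with Lemma~\ref{L:ECoercivity} supplying the $H^2$-control of the comparison function needed to fit the recovery sequence into $\Ahk(g,\Phi)$. The one organizational difference is that where the paper picks a near-infimizer $w$ with $E[w]\le\inf E+\eta$ and sends $\eta\to0$, you instead compare against an arbitrary $w$ and dichotomize on whether $E[w]\le\Lambda_0$; both devices accomplish the same constant-matching between $\eps_1$ and $\|w\|_{H^2(\Omega)}^2$, and yours is arguably slightly cleaner.
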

\begin{proof}
This proof is standard and given for completeness.  
Since  \rhn{$E_h[y_h]\le \Lambda_0$, we invoke Proposition \ref{P:LimInf}} (lim-inf property) to deduce the existence of 
	\rhn{$y \in \mathbb A(g,\Phi)$} such that a (non-relabeled) subsequence of $y_h \to y$ in $[\lom]^3$ and
	$$
	E[y] \leq \liminf_{\rhn{\hb \rightarrow 0} } E_h[y_h].
	$$
	This means that \rhn{$\mathbb{A}(g,\Phi)$} is non-empty and that 
	$$
	\rhn{\inf_{z \in \mathbb{A}(g,\Phi)} E[z] \leq E[y] \leq \Lambda_0.}
	$$
	To show that $y$ is a global minimizer of $E$, we let $0<\eta <1$ and
        \rhn{$w \in \mathbb{A}(g,\Phi)$ satisfy 
	$$
	E[w] \leq  \inf_{z \in \mathbb{A}(g,\Phi)} E[z] + \eta \le \eta+ \Lambda_0.
	$$
	Lemma \ref{L:ECoercivity} (coercivity of $E$) yields
        $\|w\|_\htwom^2 \lesssim \eta+\Lambda_0+R(g,\Phi,f)=:\Lambda_1$
        whence, if $\eps\ge\eps_1=C\hb\Lambda_1$,
        Proposition \ref{P:LimSup} (lim-sup property) gives a recovery
	sequence $w_h \in \mathbb{A}_{h,\eps}^k(g,\Phi)$ of $w$ such that
	$w_h \to w$ in $[\lom]^3$ as $\hb,\eps\to0$ and}
	$$
	\limsup_{\rhn{\hb,\eps\to0}} E_h[w_h] \leq E[w].
	$$
        We next utilize that $E_h[y_h]\le E_h[w_h]+\sigma$, because $y_h$ is an
        almost global minimizer of $E_h$ within \rhn{$\Ahk(g,\Phi)$, to deduce
	$$ 
	\begin{aligned}
	E[y] \leq \liminf_{\hb \rightarrow 0 } E_h[y_h] 
	\leq \limsup_{\hb \rightarrow 0} \, \big (E_h[w_h]  + \sigma \big)
	\leq   E[w]
	\leq  \inf_{z \in \mathbb{A}(g,\Phi)} E[z] + \eta.
	\end{aligned}
	$$
        }
	Taking $\eta \to 0$ implies that $y$ is an global minimizer of $E$ and
	$$
	\lim_{\hb \rightarrow 0}E_h[y_h]=E[y].
	$$
	This concludes the proof.
\end{proof}

\subsection{Strong Convergence of $H_h[y_h]$ and Scaled Jumps}\label{S:strong-conv}
We now exploit Theorem \ref{T:Gcon} (convergence of global minimizers) to
strengthen the \rhn{assertions} of \Cref{P:L2conv} (compactness in
$L^2(\Omega)$), in the
  spirit of \cite{DiPietroErn,Pryer}.
In fact, we show strong convergence of the scaled jump terms to zero and
of the discrete Hessian $H_h[y_h]$ \rhn{as well as broken Hessian $D^2_h y_h$}
of $y_h$ to $D^2 y$ as $\hb\to0$.

\begin{Corollary}[\textrm{strong convergence of Hessian and scaled jumps}] \label{C:StrongConv} 
Let $(g,\Phi,f)$ satisfy \eqref{E:data} and the penalty parameters $\gamma_0$ and $\gamma_1$ in \rhn{\eqref{E:def-ah} be chosen sufficiently large so that \eqref{E:sign-J} holds. Let $y_h\in\Ahk(g,\Phi)$} be a subsequence of almost global minimizers of $E_h$
converging to a global minimizer \rhn{$y\in\mathbb{A}(g,\Phi)$} of $E$, as established in
\Cref{T:Gcon}. Then, the
following statements are valid as $\hb\to0$
	\begin{enumerate}[(i)]
		\item $H_h[y_h] \to D^2y$ strongly in $[\lom]^{3\times 2 \times 2}$;
		\item $\| h^{-1/2} [ \nabla_h y_h ]\|_{L^2(\Gamma_h)} + \| h^{-3/2} [  y_h ]\|_{L^2(\Gamma_h)} \to 0$;
		\item $D_h^2y_h \to D^2y$ strongly in $[\lom]^{3\times 2 \times 2}$.
	\end{enumerate}
\end{Corollary}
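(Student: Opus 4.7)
The plan is to deduce the three strong convergence statements by combining the energy identity of \Cref{P:HhIneq} (relation between $E_h[y_h]$ and $H_h[y_h]$), the lim-inf property of \Cref{p:liminf_H}, and the fact that energies converge along the subsequence, $E_h[y_h]\to E[y]$, as established by \Cref{T:Gcon}. The whole argument hinges on showing that the non-negative stabilization excess $J_h[y_h]$ vanishes in the limit, which simultaneously controls the scaled jumps and forces the broken Hessian and its reconstruction to share the same $L^2$-limit.

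First I would start from the identity \eqref{e:for_later}, namely $E_h[y_h]+(f,y_h)_{L^2(\Omega)} = \tfrac12\|H_h[y_h]\|_{L^2(\Omega)}^2 + J_h[y_h]$, with $J_h[y_h]\ge 0$. Since $y_h\to y$ in $[L^2(\Omega)]^3$ and $E_h[y_h]\to E[y] = \tfrac12\|D^2 y\|_{L^2(\Omega)}^2 - (f,y)_{L^2(\Omega)}$ by \Cref{T:Gcon}, the left-hand side converges to $\tfrac12\|D^2 y\|_{L^2(\Omega)}^2$. On the other hand, \Cref{p:liminf_H} applies because \Cref{P:L2conv} provides all the required trace and $L^2$ convergences and the bound $\|y_h\|_{H_h^2(\Omega)}\le\Lambda$ follows from \Cref{L:Coercivity}; hence $H_h[y_h]\rightharpoonup H$ in $[L^2(\Omega)]^{3\times 2\times 2}$ and $\liminf_{\hb\to 0}\|H_h[y_h]\|_{L^2(\Omega)}\ge\|D^2 y\|_{L^2(\Omega)}$. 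Writing $\tfrac12\|H_h[y_h]\|_{L^2(\Omega)}^2 = E_h[y_h]+(f,y_h)_{L^2(\Omega)} - J_h[y_h]$ and using $J_h[y_h]\ge0$ I get the matching $\limsup$ bound; combining yields
\[
\lim_{\hb\to0}\|H_h[y_h]\|_{L^2(\Omega)}^2 = \|D^2 y\|_{L^2(\Omega)}^2 \quad\text{and}\quad \lim_{\hb\to0} J_h[y_h]=0.
\]

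Next, for statement (i) I would identify the weak limit $H$ with $D^2 y$. From the orthogonality \eqref{e:ortho_H} tested with $w=y\in[H^2(\Omega)]^3$, one has $\int_\Omega H:D^2 y = \|D^2 y\|_{L^2(\Omega)}^2$; together with the norm convergence just obtained this gives
\[
\|H-D^2 y\|_{L^2(\Omega)}^2 = \|H\|_{L^2(\Omega)}^2 - 2\int_\Omega H:D^2 y + \|D^2 y\|_{L^2(\Omega)}^2 = 0,
\]
so $H=D^2 y$. Weak convergence with convergence of norms then upgrades to strong convergence $H_h[y_h]\to D^2 y$ in $[L^2(\Omega)]^{3\times 2\times 2}$.

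Statement (ii) follows immediately from $J_h[y_h]\to 0$ and the coercivity estimate \eqref{E:sign-J}, which is available because $\gamma_0,\gamma_1$ are taken sufficiently large; this provides $\|h^{-3/2}[y_h]\|_{L^2(\Gamma_h)}^2+\|h^{-1/2}[\nabla_h y_h]\|_{L^2(\Gamma_h)}^2\lesssim J_h[y_h]\to 0$. Finally, statement (iii) is a direct consequence of (i), (ii) and the definition \eqref{E:Hh}: indeed, by the $L^2$-bounds \eqref{E:boundsRB} of the lifting operators, $\|R_h([\nabla_h y_h])\|_{L^2(\Omega)}$ and $\|B_h([y_h])\|_{L^2(\Omega)}$ are both controlled by the scaled jumps in (ii) and therefore vanish, so writing $D_h^2 y_h = H_h[y_h] + R_h([\nabla_h y_h]) - B_h([y_h])$ and passing to the limit in $L^2(\Omega)$ yields $D_h^2 y_h\to D^2 y$ strongly. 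The only delicate step I anticipate is pinning down the weak limit $H$, since \eqref{e:ortho_H} only provides orthogonality against discrete Hessians of $H^2$-functions; the trick above (test with $w=y$ and use the equality of norms) is what closes the gap cleanly.
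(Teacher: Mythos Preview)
Your proof is correct and follows essentially the same route as the paper: energy convergence from \Cref{T:Gcon}, the identity \eqref{e:for_later} with $J_h\ge 0$, the lim-inf and orthogonality from \Cref{p:liminf_H}, then \eqref{E:sign-J} and \eqref{E:boundsRB} for (ii) and (iii). One small imprecision: the norm convergence you obtain is $\|H_h[y_h]\|\to\|D^2 y\|$, not $\|H\|=\|D^2 y\|$, so the displayed equality $\|H-D^2 y\|^2=0$ is not immediate; however, weak lower semicontinuity gives $\|H\|\le\|D^2 y\|$, whence $\|H-D^2 y\|^2=\|H\|^2-\|D^2 y\|^2\le 0$, closing the gap (the paper sidesteps this by expanding $\|H_h[y_h]-D^2 y\|^2$ directly and passing to the limit using $(H_h[y_h],D^2 y)\to(H,D^2 y)=\|D^2 y\|^2$).
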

\begin{proof} We first observe that
Theorem \ref{T:Gcon} (convergence of global minimizers) yields
\[
E_h[y_h] \rightarrow E[y] =  \frac{1}{2} \int_{\Omega} |D^2 y|^2 - \int_{\Omega} f\cdot y \qquad \textrm{as }\hb\to 0.
\]
We apply Propositions \ref{P:HhIneq} (relation between $E_h[y_h]$ and \rhn{$H_h[y_h]$})
to deduce
\begin{align*}
	\frac{1}{2} \limsup_{\rhn{\hb\rightarrow 0}} \|H_h[y_h]\|_\lom^2
	&\leq \limsup_{\rhn{\hb \rightarrow 0}}\left( E_h[y_h]+\int_{\Omega} f \cdot y_h \right)
        = \frac{1}{2} \|D^2 y\|_\lom^2.
\end{align*}
\rhn{We now resort to Proposition~\ref{p:liminf_H} (lim-inf property of the reconstructed Hessian). In fact, combining \eqref{e:liminf_H} with the
previous inequality gives the convergence in norm
$$
		\|H_h[y_h]\|_\lom \rightarrow \|D^2y\|_\lom.
$$
In addition, Proposition~\ref{p:liminf_H} guarantees that $H_h[y_h] \rightharpoonup H$ in $\lbrack L^2(\Omega)\rbrack^{3\times 2\times 2}$ where $H$ satisfies the orthogonality property \eqref{e:ortho_H}. This leads to 
$$
 \big( H_h[y_h], D^2 y \big)_{L^2(\Omega)} \rightarrow  \big( H, D^2 y \big)_{L^2(\Omega)} = \| D^2 y \|_{L^2(\Omega)}^2, \quad \textrm{as }h \to 0.
$$
The above property and the convergence of norms imply strong convergence, \rhn{namely
\[
\big\|H_h[y_h] - D^2 y \big\|_{L^2(\Omega)}^2 = \big\|H_h[y_h]\big\|_{L^2(\Omega)}^2
+ \| D^2 y\|_{L^2(\Omega)}^2 - 2 \big( H_h[y_h], D^2 y \big)_{L^2(\Omega)}\to0.
\] 
} 
This in turn yields (i).} To prove (ii) we make use of (i) to infer that as \rhn{$\hb\to0$}
$$
 	J_h[y_h] = E_h[y_h] - \frac{1}{2} \int_\Omega |H_h[y_h]|^2
	+ \int_{\Omega} f \cdot y_h \, \rightarrow \,
        E[y] -\frac{1}{2} \int_\Omega |D^2y|^2 + \int_{\Omega} f \cdot y =0,
$$    
\rhn{where we have used the definition \eqref{e:for_later} of $J_h[y_h]$.   
In view of \eqref{E:sign-J}, we thus get} 
$$ 
\|h^{-3/2} [y_h]\|_{L^2(\Gamma_h)} +  \|h^{-1/2} [\nabla_h y_h]\|_{L^2(\Gamma_h)} \to 0 \qquad \textrm{as } \rhn{\hb \to 0.}
$$
This not only establishes (ii), but combined with (i), the definition
\eqref{E:Hh} of $H_h[y_h]$, and the bounds \eqref{E:boundsRB} for
\rhn{$R_h([\nabla_h y_h])$ and $B_h([y_h])$,}
directly implies (iii).
\end{proof}

\section{Numerical Experiments} \label{S:NumExPl}

In this section we explore and compare the performance of our method with that
of \rhn{Kirchhoff elements \cite{Bartels,BartelsBook,BaBoMuNo,BaBoNo}.} We are interested in the speed and accuracy of the method,
as well as its ability to capture the \rhn{essential physics and geometry}
of the problems appropriately.
We observe that our dG approach seems to be more 
flexible with comparable or better speed. \rhn{We present below} specific examples
computed within the platform deal.ii with polynomial degree $k=2$
\rhn{and uniform meshes $\mathcal{T}_h$ made of rectangles}
\cite{dealii85,dealiipaper};
hence we use $\mathbb{Q}_2(T)$ for all $T\in\mathcal{T}_h$.
Moreover, one might notice that we consistently use
rather large penalty parameters $\gamma_0, \gamma_1$
relative to the second-order case \rhn{\cite{dealii85,ABCM,dealiipaper,BoNo,Brezzi,Riv}.}
\rhn{These choices hinge mostly on experiments performed for the vertical load
example of Subsection \ref{ss:verticalLoad}
and are not dictated by stability considerations exclusively. In fact, they are a compromise between the discrete initial energy $E_h[y_h^0]$ and the fictitious time step $\tau$ of the gradient flow, which obey the relations \eqref{E:approx-isometry} and \eqref{E:delta0} and control the discrete isometry defect $D_h[y_h]$ according to \eqref{e:Dh}. We point out that enforcing Dirichlet conditions via the Nitsche's approach depends
on the magnitude of $\gamma_0$ and $\gamma_1$, which in turn affects $E_h[y_h^0]$. We examined a very wide range of
$\gamma_0$ and $\gamma_1$ and compared $D_h[y_h]$ with}
the rate at which it decreases for each
tested pair. We selected those 
values that lead to the smallest defect and the largest rate of convergence.
We made similar choices for all subsequent examples without exhaustive testing
for each of them. We note that our theory does not explicitly predict 
why the best convergence behavior manifests for such large values
\rhn{of $\gamma_0$ and $\gamma_1$.}
We do not provide our computational study leading to $\gamma_0$ and $\gamma_1$,
for the sake of brevity.

\subsection{Vertical load on a square domain} \label{ss:verticalLoad}

This is a simple example of bending due to a vertical load. We use the same configuration 
as in \cite{Bartels} in order to \rhn{present a faithful} comparison of the two methods. We deal with a square domain with two of the non-parallel sides being clamped.
\begin{example} \label{Eg:vertical_load} Let $\Omega = (0,4) \times (0,4)$ and $\partial_D \Omega = \left\{ 0 \right\} \times [0,4] 	\cup [0,4] \times \left\{ 0 \right\}$ be the part of the boundary where we enforce the \rhn{Dirichlet} boundary conditions
  \begin{equation}\label{E:clamped}
	g = (x_1,x_2,0) \quad \text{and} \quad \Phi = [I_2,0]^T.
  \end{equation}
	We apply a vertical force \rhn{$f=(0,0,F)$ of magnitude $F=2.5 \times 10^{-2}$.}
\end{example}
\FloatBarrier
\begin{table}[htb]
	\begin{center}
		\begin{tabular}[t]{ | c | c | c | c | c | c |}
			\hline
			No. Cells & DoFs   & \rhn{$\dt=\hb$} & $E_h[y_h]$  &   $D_h[y_h]$ & Iterations \\
			\hline\hline
			256 & 7680 & $\sqrt{2}~2^{-2}$ &	  -7.53e-3  &  	4.02e-3 	& 13 \\ 
			\hline
			1024 & 30,720  & $\sqrt{2}~2^{-3}$ &	  -5.76e-3 	&  	1.63e-3 	& 28  \\ 
			\hline 
			4096 & 122,880  & $\sqrt{2}~2^{-4}$ &	 -4.26e-3 	&  	6.07e-4		& 76 \\ 
			\hline 
			16384 & 491,520   & $\sqrt{2}~2^{-5}$&	  -3.30e-3	&  2.28e-4		& 140\\ 
			\hline 
		\end{tabular}
	\end{center}
	\FloatBarrier
	\vskip0.2cm
	\caption[Single-Layer: Vertical Load Energies]{\small \rhn{Example \ref{Eg:vertical_load}:} Number of cells, degrees of freedom, discrete energy $E_h[y_h]$, isometry defect $D_h[y_h]$ and number of gradient flow iterations for \rhn{the square plate $\Omega=[0,4]^2$, clamped in two adjacent} sides with vertical forcing. We observe super-linear rates for the isometry defect, while theory predicts linear rates for the case \rhn{$\tau = \hb$.} }\label{Ta:DG_convergence}
\end{table}
\FloatBarrier

\FloatBarrier
\begin{figure}[htb]
	\begin{center}
		\includegraphics[scale=0.3]{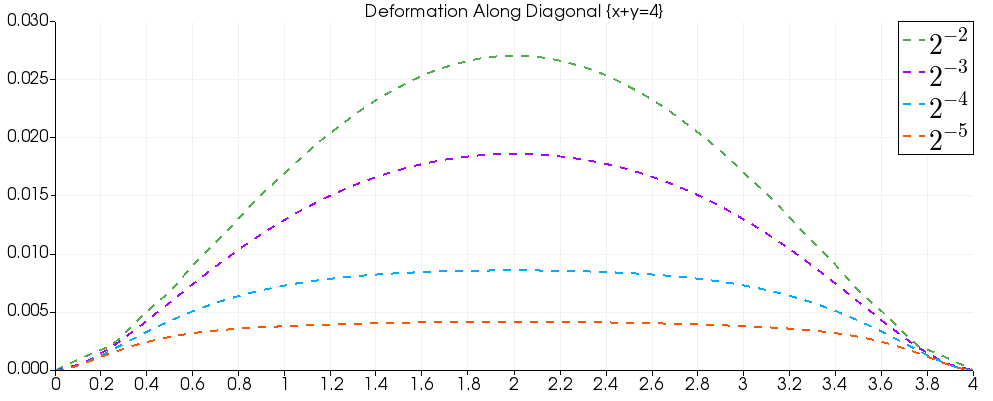}
	\end{center}
		\caption[Single-Layer: Diagonal Deformation - dG]{\small \rhn{Example \ref{Eg:vertical_load}:} Deformation along the diagonal $x_1+x_2=4$. We observe smaller deformation $y$ than with the Kirchhoff elements, up to one order of magnitude. For example, \rhn{$\hb=2^{-3}$} yields $|y| \approx 0.018$ for dG, while \rhn{$\hb=2^{-6}$} gives $|y| \approx0.02$ for Kirchhoff elements.} \label{F:DGDiag} 
\end{figure} 
\FloatBarrier

We first illustrate the convergence of the energy $E_h[y_h]$ and the isometry defect $D_h[y_h]$ as \rhn{$\hb$} decreases. We set $\gamma_0 = 5,000$, $\gamma_1 = 1100$ and choose \rhn{$\dt =\hb$} and observe that the isometry defect decays super-linearly with $\dt$, which is better than the linear convergence predicted in Lemma \ref{L:GFlowDIC} \rhn{(discrete isometry defect).} Compared to \cite{Bartels}, we obtain a more clear rate, while the defect \rhn{itself is up to one order of magnitude smaller.} The number of gradient flow iterations is similar for both methods.

We now explore the geometric behavior of our method. More precisely, it was observed in \cite{Bartels} that there was an artificial displacement along the diagonal $x_1+x_2=4$, which does not correspond to the actual physics of the problem, \rhn{namely} $y=0$ for $x_1+x_2 \leq 4$. This displacement decreases with smaller mesh size $\hb$. Our method introduces the same artificial deformation. However, we can see in \rhn{Figure \ref{F:DGDiag}} that \rhn{this displacement is smaller now, up to one order of magnitude for the last refinement.}

\subsection{Obstacle Problem}
\rhn{In Example \ref{Eg:vertical_load} we document the flexibility of
the dG approach to capture the correct bending} behavior of plates. To explore this 
further, we introduce an extra element to the deformation: \rhn{a rigid obstacle.} We use a square plate clamped on one side and we exert a vertical force. We require that the plate does not penetrate the obstacle. This example is motivated by \cite{BaBoMuNo}, where the deformation of the plate is the result of thermal actuation of bilayer hinges connected to the plate.

\begin{example} \label{Eg:obstacle} Let $\Omega = (-1,1) \times (-1,1)$ \rhn{and} $\partial_D \Omega = \left\{ -1 \right\} \times [-1,1]$ be the \rhn{Dirichlet boundary where we enforce clamped boundary conditions \eqref{E:clamped}.} We apply the vertical force $f=(0,0,1)$. We choose a simple case where the obstacle is a rigid flat 
plate at height $z = 0.2$. From a mathematical viewpoint, this obstacle problem can be 
treated by introducing the convex set of \rhn{admissible deformations}
$$
K = \left\{ y \in [\lom]^3 \ : \quad y_3 \leq 0.2	 \right\},
$$
along with splitting of variables. We introduce another deformation $s$, $ s \approx y$ such that $s \in K$ always and penalize the $\lom$-distance between $y$ and $s$. If $\sigma$ is an obstacle penalty parameter, we add the following extra term to the energy $E[y]$
$$
\frac{1}{\sigma} \|y-s\|_\lom^2.
$$

At the discrete level, this affects the gradient flow at each step, where we use as $s_h$ the $L^2$-projection of the previous solution $y_h^n$ in $K$. We refer to \cite{BaBoMuNo} for more details about the variable 
splitting and the projection. 

\rhn{The plate configurations before contact are unaffected by the obstacle presence
  and dictated
  by pure bending. Contact occurs with minor penetration of the obstacle because we do
  not force the deformation $y_h$ to belong to the set $K$, but rather penalize its
  $L^2$-distance from its projection to $K$. As time evolves, the plate bends backwards
  trying to decrease the obstacle crossing because it is energetically costly.
  This behavior can be enhanced by further mesh refinement and by choosing a smaller} obstacle parameter $\sigma$. To illustrate the \rhn{obstacle-plate interaction,} we provide six different steps of the gradient flow in Figure \ref{F:Obstacle_time_steps}. The illustration corresponds to 1024 cells, $\sigma = 3 \times 10^{-4}$ and time step $\dt = 5 \times 10^{-4}$. \rhn{The penalty parameters are  $\gamma_0=\gamma_1=5000$, and the} isometry defect at the end of this simulation is $9 \times 10^{-5}$.
\end{example}

\FloatBarrier
\begin{figure}
	\begin{center}
		\subfloat[t=500ts]{\includegraphics[width = 2.4in]{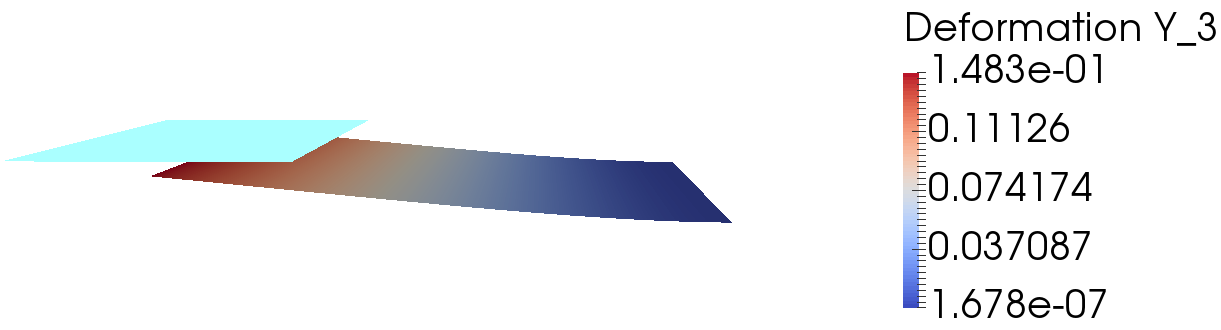}} \quad
		\subfloat[t=700ts]{\includegraphics[width = 2.4in]{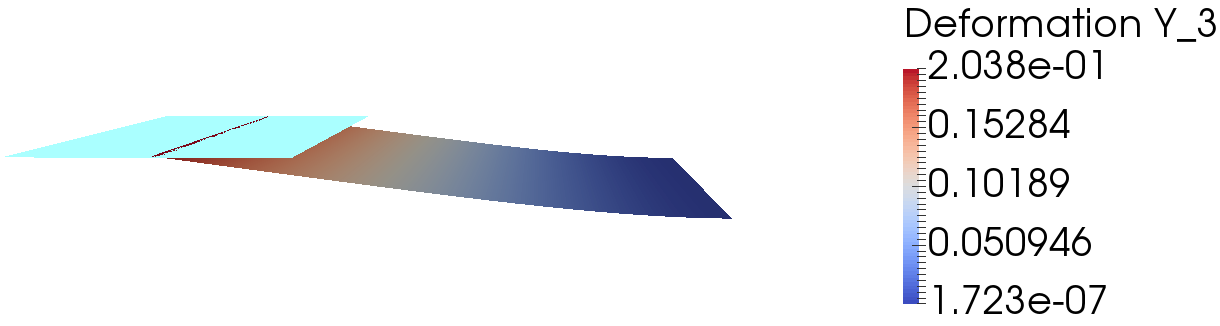}}\\
		\subfloat[t=1300ts]{\includegraphics[width = 2.4in]{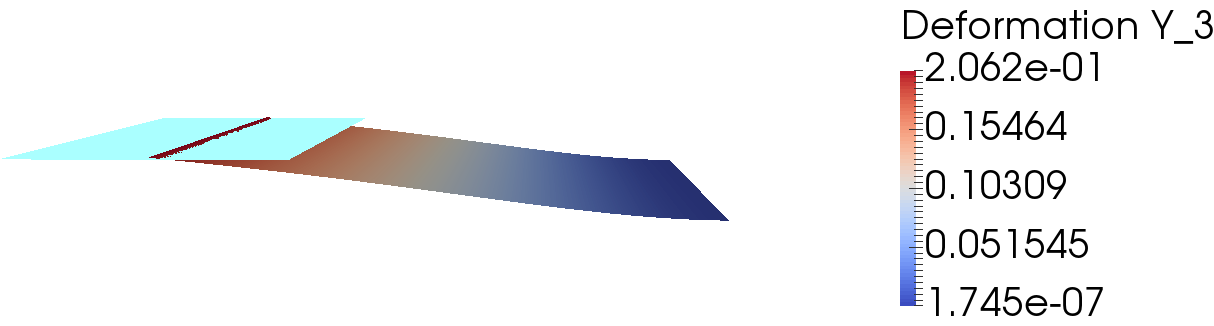}}\quad
		\subfloat[t=3000ts]{\includegraphics[width = 2.4in]{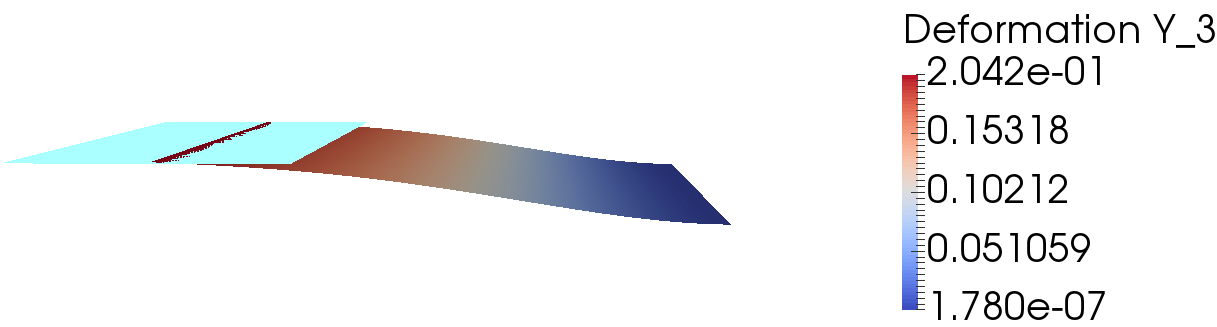}} \\
		\subfloat[t=5000ts]{\includegraphics[width = 2.4in]{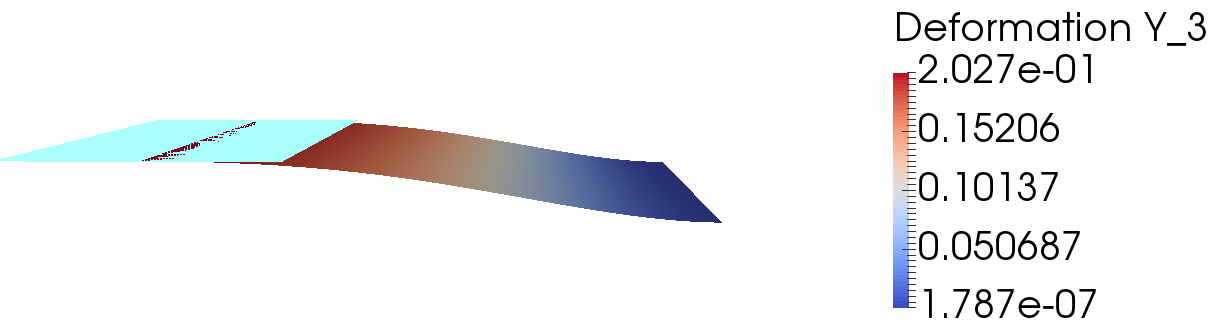}} \quad
		\subfloat[t=13382ts]{\includegraphics[width = 2.4in]{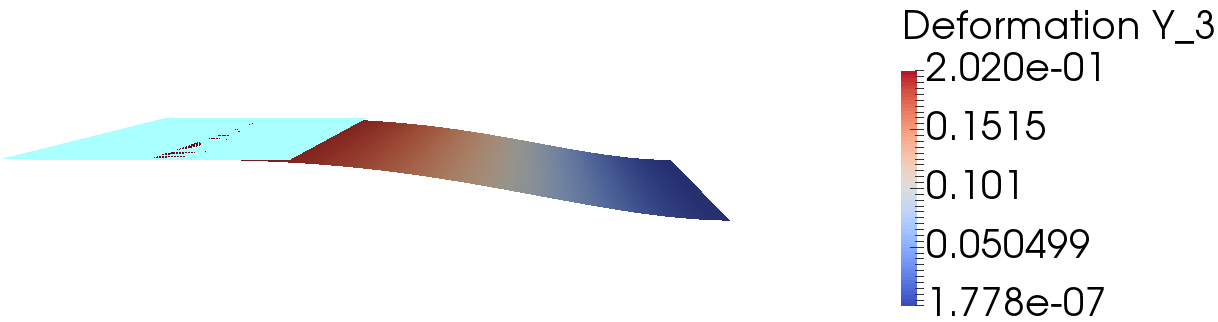}} \quad
	\end{center}
		\caption[Single-Layer: Obstacle Deformation]{\small \rhn{Example \ref{Eg:obstacle} (deformation of the plate with obstacle dictated only by forcing): (a) Configuration before contact,} (b) First contact with the obstacle, (c) Penetration of the obstacle, (d) Plate starts adjusting shape to decrease crossing, (e) Stronger bending \rhn{with smaller crossing, (f) Final stage with obtacle crossing of about 0.002.}}
			\label{F:Obstacle_time_steps} 
\end{figure}
\FloatBarrier

\subsection{Compressive Case: \rhn{Buckling}}
\rhn{We finally investigate the geometrically and physically interesting case of buckling.} We use a rectangular plate with a small vertical load, \rhn{that induces a bias for bending,} and we impose compressive boundary conditions on two opposite sides. \rhn{The latter reveals the delicate interplay between the Nitsche's approach for enforcement of Dirichlet boundary conditions and the choice of suitable initial configurations, for the performance of the gradient flow.} This example is also motivated by \cite{Bartels}.

\begin{example} \label{Eg:compression}
	Let $\Omega = (-2,2) \times (0,1)$ \rhn{and} $\partial_D \Omega = \left\{ -2,2 \right\} \times [0,1]$
	be the two sides where we impose the compressive boundary conditions
	$$
	g = (x_1 \pm 1.4,x_2,0) \quad \text{and} \quad \Phi = [I_2,0]^T.
	$$
	We apply a vertical force $f$ of magnitude $10^{-2}$.
	Given the compressive nature of the problem, the plate could bend either upwards or downwards (buckling), resulting in two deformations which are the reflection of each other with respect to the $x_1-x_2$ plane and have the same \rhn{minimal} energy. This is why we apply a small force \rhn{upwards to select the deformation with positive third component. Since our initial configuration is flat, there is a boundary mismatch that leads to large boundary penalty terms in the Nitsche's approach and correspondingly large initial energy $E_h[y_h^0]$. This in turn gives rise to either large isometry defects, because of \eqref{E:delta0} and \eqref{e:Dh}, or tiny time steps $\tau$ and very slow evolution. This might explain why we need a stronger force than the one in \cite{Bartels} (i.e. $10^{-5}$) because $E_h[y_h^0]$ is dominated by the boundary terms.
Moreover, to prevent a large $E_h[y_h^0]$ from creating very abrupt and non-physical deformations during the gradient flow for moderate $\tau$, we employ a quasi-static approach: we enforce the boundary conditions gradually thereby avoiding a large mismatch (parameter continuation).} We use a parameter $\alpha$ that starts from zero and increases throughout the flow until it reaches the value 1. In order to achieve a gradual adjustment to the boundary conditions, we let
	$$
	\varphi(\alpha) := (1- \alpha) \ id + \alpha \ g, \qquad 0 \le \alpha \le 1,
	$$
        and $\Phi(\alpha)=\Phi$ be the Dirichlet boundary conditions for $y$ and $\nabla y$, where $id$ stands for the identity function. \rhn{As the mesh $\mathcal{T}_h$ becomes finer, the growth of parameter $\alpha$ must be slower to compensate for the larger initial energy associated with mesh-dependent boundary terms as well as to allow for smooth flow evolutions.}

\FloatBarrier
\begin{figure}[htb]
	\begin{center}
		\subfloat[$\alpha=0.05$]{\includegraphics[width = 2.1in]{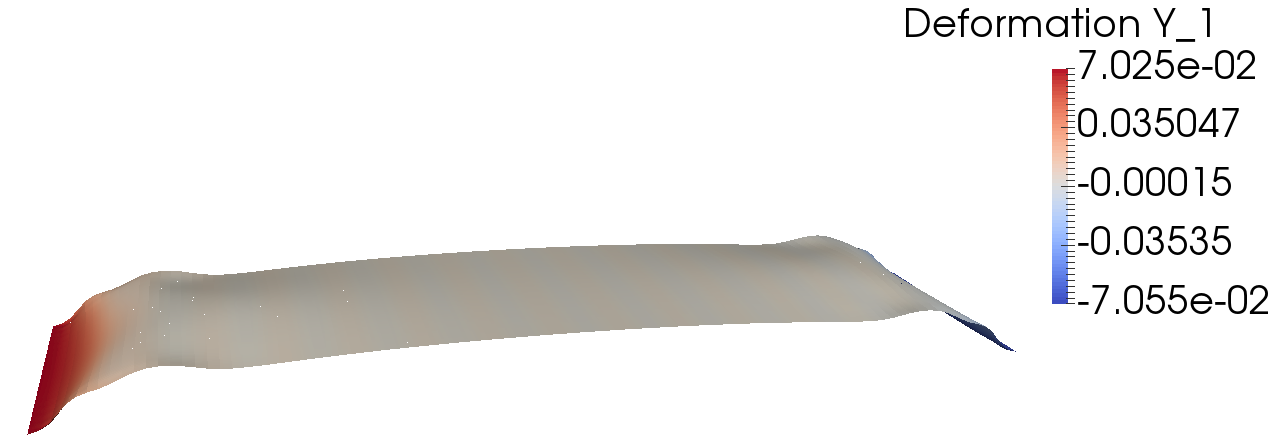}} \quad
		\subfloat[$\alpha=0.1$]{\includegraphics[width = 2.1in]{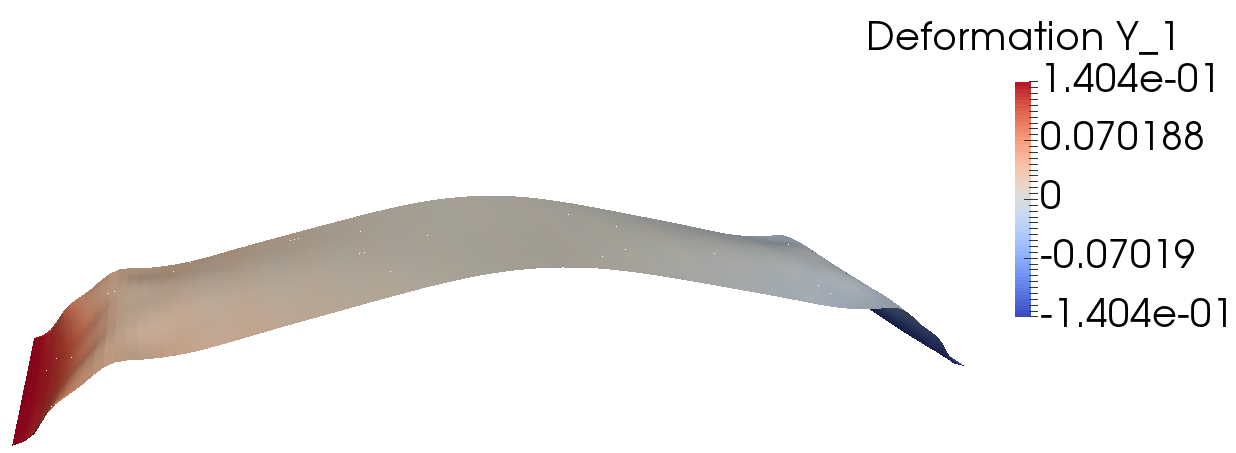}}\\
		\subfloat[$\alpha=0.25$]{\includegraphics[width = 2.1in]{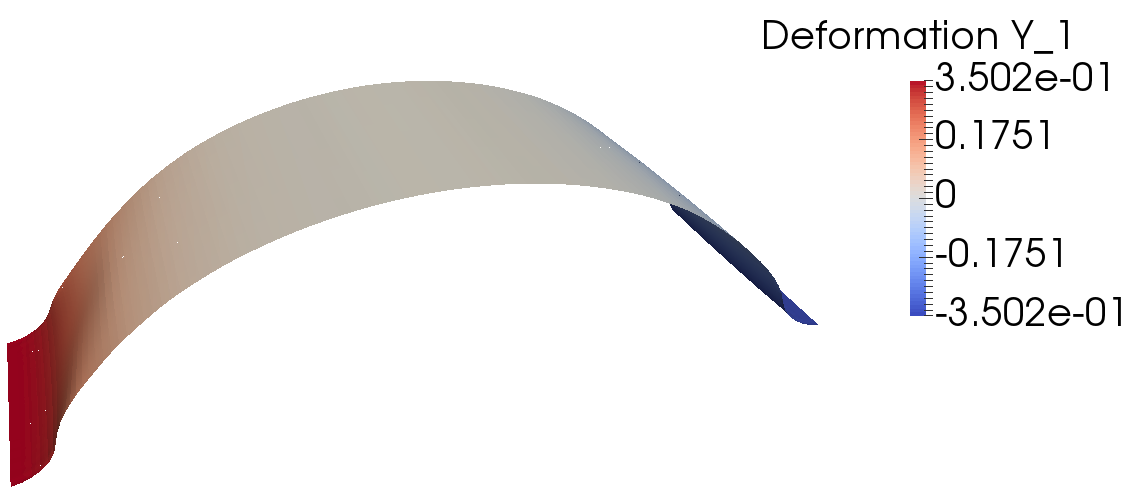}} \quad 
		\subfloat[$\alpha=0.6$]{\includegraphics[width = 2.1in]{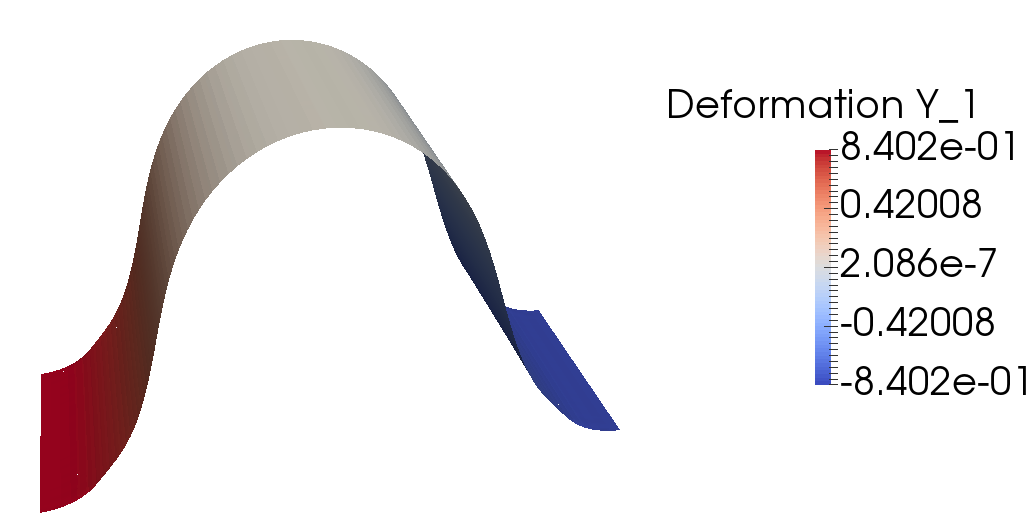}}\\
		\subfloat[$\alpha=0.65$]{\includegraphics[width = 2.1in]{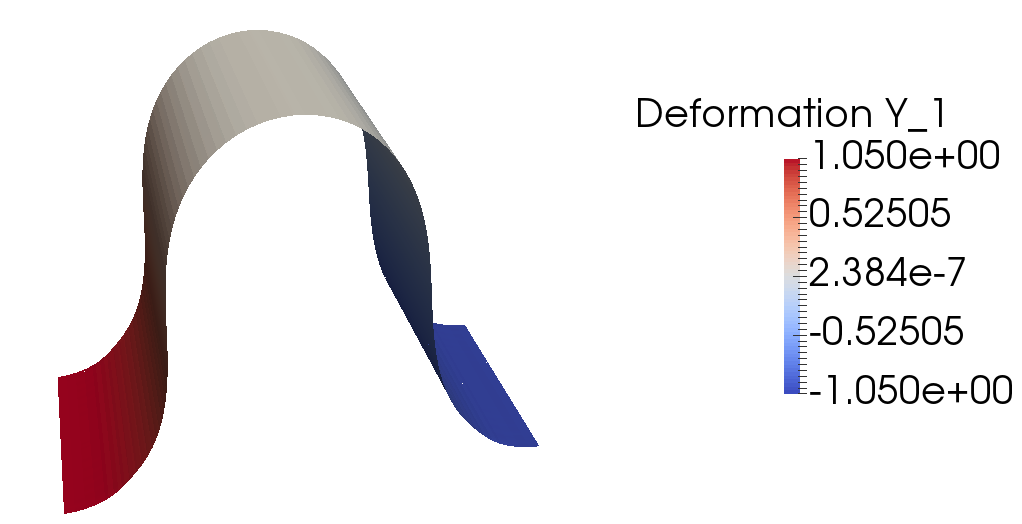}} \quad 
		\subfloat[$\alpha=1$]{\includegraphics[width = 2.1in]{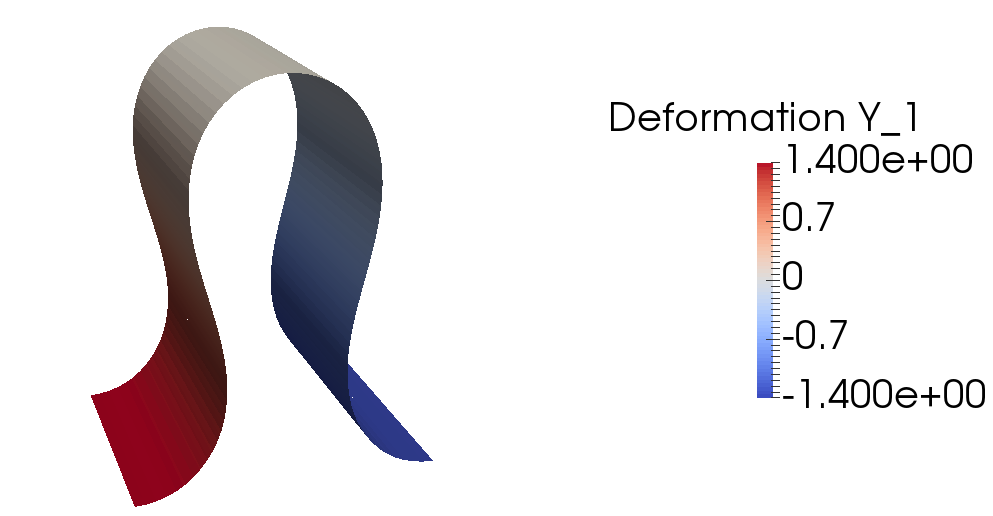}} 
	\end{center}
	\caption{\small \rhn{Example \ref{Eg:compression} (buckling of a rectangular plate at six different stages):} (a)-(c) Initial stages \rhn{with small deformations dictated} mostly by the forcing, (d)-(e) The compressive boundary conditions \rhn{dominate the evolution and induce large bending deformations, (f) Final deformation with attained compressive boundary condition.}}
	\label{F:Compression} 
\end{figure}
\FloatBarrier

        To illustrate the effect of $\alpha$, we depict \rhn{deformations in Figure \ref{F:Compression} that} corresponds to various stages of the gradient flow with $\gamma_0=\gamma_1=10^4$ and time step $\dt = 0.04625$. The parameter  $\alpha$ increases linearly by \rhn{the amount} $5 \times 10^{-5}$ in each iteration. We see that the compressive nature of the boundary conditions becomes more apparent after $\alpha$ \rhn{exceeds the value} $1/2$ and that the boundary conditions are \rhn{attained} at the end of the deformation. The final isometry defect is $D_h[y_h] = 5.04 \ 10^{-3}$, one order of magnitude smaller than observed in \cite{Bartels}.        
        
\end{example}

\section{Implementation} \label{S:Implementation}
%
\rhn{We now make some implementation remarks and connect them with the theory in Sections \ref{S:DGScheme} and \ref{S:GFlow}. If $(\by\npow,\bla\npow)$ are the nodal values of functions $(\delta y_h^{n+1},\lambda_h^{n+1})$, then the matrix form of the saddle point problem \eqref{E:saddle} reads}
\begin{equation} \label{E:Schur}
	\left[ 
	\begin{matrix}
	A & B_n^T \\
	B_n & 0 
	\end{matrix}	
	\right] \ \ 
	\left[ 
	\begin{matrix}
	\by\npow \\
	\bla\npow
	\end{matrix}
	\right]
	= 
	\left[
	\begin{matrix}
	\bF \\
	\boldsymbol{0}
	\end{matrix}
	\right],
	\end{equation}
where $A$ is the matrix corresponding to the \rhn{left-hand side of \eqref{E:GFlow} and $B_n$ is the matrix associated with \eqref{E:ell}, which depends on $y_h^n$.}
Since $A$ does not depend on $y_h^n$ we can assemble it and perform its LU decomposition once and subsequently use a direct solver whenever we need \rhn{the action of} $A^{-1}$. For the full system we use the Schur complement \rhn{approach with a conjugate gradient iterative solver, to first solve for $\bla\npow$ and next recover $\by\npow$. The numerical experiments of Section \ref{S:NumExPl} reveal the existence of solution $(\delta y_h^{n+1},\lambda_h^{n+1})$ of the discrete gradient flow \eqref{E:saddle}, which exhibits small isometry defects asymptotically as $n$ grows and confirm the validity of an inf-sup condition for \eqref{E:saddle}. However, this issue remains open (see Remark \ref{R:saddle}).}

Lastly, it is important to mention that our choice in \Cref{S:DGScheme} of a space $\Vhk$ of discontinuous polynomials is strongly motivated by \rhn{the structure of \eqref{E:Schur}. More precisely, if  $\delta y_h^{n+1}$ were continuous then the inf-sup condition for $\ell_n$ in \eqref{E:ell} would not be local and be harder to achieve.} In fact, computational experiments for continuous functions (not reported here) indicate that the conjugate gradient method becomes significantly slower if the functions of $\Vhk$ are required to be continuous; this \rhn{justifies} our choice of a fully discontinuous space $\Vhk$. We refer to \cite{Ntogkas} for details.

\section{Conclusions}\label{S:conclusions}

In this work we design, \rhn{analyze} and implement a dG approach to construct minimizers for large bending deformations under a nonlinear isometry constraint. \rhn{The problem is nonconvex and falls within the nonlinear Kirchhoff plate theory. We propose a discrete energy functional and provide a flexible approximation of the isometry constraint. We devise a discrete gradient flow for computing discrete minimizers and enforcing a discrete isometry defect. We construct a discrete approximation of the Hessian inspired by \cite{DiPietroErn,Pryer} that turns out to be instrumental to prove $\Gamma-$convergence: convergence of the discrete energy to the continuous one as well as $L^2$-convergence of global minimizers of the discrete energy to} global minimizers of the continuous energy. The existence of the latter is not assumed a-priory, but is rather a consequence of our analysis. Our dG approach simplifies some implementation details and theoretical constructions needed in \cite{Bartels,BaBoNo} for the Kirchhoff elements. \rhn{The dG formulation is also valid for graded isoparametric elements of degree $k\ge2$, which is considerably more general than \cite{Bartels,BaBoNo}.} Moreover, we present numerical experiments that indicate that the dG approach also captures the physics of some problems better than the Kirchhoff approach, while also giving rise to a more accurate approximation of the isometry constraint.

\rhn{ 
\section{Appendix: Estimates for Isoparametric Mappings}\label{S:A-quads}

In this appendix we present inverse and error estimates involving the Hessian
for elements obtained by isoparametric transformations;
this includes quadrilaterals mapped by bilinear maps $\mQ_1$. The issue at stake
is that the isoparametric map $F_T:\hT\to T$ from the reference element $\hT$ to a generic
element $T\in\mathcal{T}_h$ is no longer affine but rather
$F_T$ belongs to $\lbrack \mP_k(\hT)\rbrack^2$ or $\lbrack \mQ_k(\hT) \rbrack^2$,
whence $D^m F_T \ne \bz$ for $2\le m\le k$. Since the considerations below are
local to a single
element $T$, we drop the sub-index $T$ in $F_T$. 
Moreover, we denote by $\widehat{v}:= v \circ F$ the pullback of a function
$v$ defined on $T$. Property $D^m F \ne \bz$
does not affect the first derivatives of a function $v$ but it does influence
its higher derivatives.
To quantify this effect we recall that we assume that $\{\mathcal{T}_h\}_{h>0}$
is shape-regular 
so that $\| DF \|_{L^\infty(\hT)} \lesssim h_T$ and
$\| DF^{-1} \|_{L^\infty(T)}\lesssim h_T^{-1}$, where the hidden constants depend
on shape regularity of $\mathcal{T}_h$. Furthermore, because $ \|D^2 F\|_{L^\infty(\hT)} \lesssim \| DF \|_{L^\infty(\hT)}$, Lemma 13.4 in Ern and Guermond\cite{ErnGNew} (see also Ciarlet and Raviart\cite{CiarletRaviart}) guarantees that
\begin{equation}\label{E:iso-map}
\|D^m F^{-1}\|_{L^\infty(T)} \lesssim \| DF^{-1} \|_{L^\infty(T)}^m \quad 2\le m\le k+1.
\end{equation}
Our estimates below rely on the following key property of isoparametric maps $F$:
\begin{equation}\label{E:key-prop}
  p\in\mP_1(T) \quad\Rightarrow\quad
  \hp = p\circ F \in \Vhk(\hT).
\end{equation}
If $\hI: C^0(\hT)\to\Vhk(\hT)$ is the Lagrange interpolation operator over $\hT$
of degree $k\ge1$ and $I_h: C^0(T)\to\Vhk(T)$ is the corresponding operator induced
by the map $F$, then \eqref{E:key-prop} translates into
\begin{equation}\label{E:P1-invariance}
I_h p(x) = \hI \hp(\hx) = \hp(\hx) = p(x) \quad\forall \, x := F_T(\hx) \in T, \hx\in\hT.
\end{equation}
If $v\in H^2(T)$, $\hv\in H^2(\hT)$, then the chain rule gives
\begin{gather}\label{E:point-second}
\partial_{ij}^2 v(x) = \sum_{m,n=1}^2 \partial_{mn}^2 \hv(\hx) \, \partial_i F^{-1}_m(x) \, \partial_j F^{-1}_n(x) +
\sum_{m=1}^2 \partial_m \hv(\hx) \, \partial_{ij} F^{-1}_m(x),
\\
\label{E:point-second-inv}
\partial_{ij}^2 \hv(\hx) = \sum_{m,n=1}^2 \partial_{mn}^2 v(x) \, \partial_i F_m(\hx) \, \partial_j F_n(\hx) +
\sum_{m=1}^2 \partial_m v(x) \, \partial_{ij} F_m(\hx).
\end{gather}
This, together with \eqref{E:iso-map},
yields
\begin{gather}\label{E:relation-D2v}
  \|D^2 v\|_{L^2(T)} \lesssim h_T \| DF^{-1} \|_{L^\infty(T)}^2 \| \hv \|_{H^2(\hT)}
  \lesssim h_T^{-1} \| \hv \|_{H^2(\hT)},
  \\
  \label{E:relation-D2hv}
\|D^2 \hv\|_{L^2(\hT)} \lesssim h_T \|D^2 v\|_{L^2(T)} + \|\nabla v\|_{L^2(T)}.
\end{gather}
This gives relations between the Hessians of $v$ and $\hv$ that involve lower
order terms. The next two estimates connect higher order derivatives of isoparametric maps.

\begin{Lemma}[inverse estimate]\label{L:inverse}
  Let $T\in\mathcal{T}_h$ and $e\in\calE_h$ be an edge of $T$. Then the following estimates are valid for all $v_h\in\Vhk$
  \begin{equation}\label{E:inverse}
    \|D^2 v_h\|_{L^2(e)} \lesssim h_e^{-1/2} \|D^2 v_h\|_{L^2(T)},
    \quad
    \|D^3 v_h\|_{L^2(e)} \lesssim h_e^{-3/2} \|D^2 v_h\|_{L^2(T)}.
  \end{equation}
\end{Lemma}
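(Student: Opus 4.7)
The plan is to pull the inequality back to the reference element $\hT$, where $\hv_h := v_h \circ F$ is a genuine polynomial and so the standard polynomial trace and inverse estimates apply, then transfer back to $T$ via the chain-rule formulas \eqref{E:point-second}, \eqref{E:point-second-inv}, and finally use a Bramble–Hilbert / Poincaré–Wirtinger argument exploiting the $\mP_1$-invariance \eqref{E:P1-invariance} of isoparametric maps to eliminate a spurious lower-order term.

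For the first estimate in \eqref{E:inverse}, I first change variables on the edge: since $ds \lesssim h_T \, d\hat s$ on $\hat e := F^{-1}(e)$, one has $\|D^2 v_h\|_{L^2(e)}^2 \lesssim h_T \, \|(D^2 v_h)\circ F\|_{L^2(\hat e)}^2$. The pointwise chain rule \eqref{E:point-second} combined with \eqref{E:iso-map} gives $|(D^2 v_h)\circ F|(\hat x) \lesssim h_T^{-2}(|D^2 \hv_h|(\hat x) + |\nabla \hv_h|(\hat x))$. Because $\hv_h$ is polynomial on the fixed reference element, standard trace plus inverse inequalities deliver $\|D^m \hv_h\|_{L^2(\hat e)} \lesssim \|D^m \hv_h\|_{L^2(\hT)}$ for $m=1,2$. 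Transferring back to $T$ with \eqref{E:relation-D2hv} and the companion bound $\|\nabla \hv_h\|_{L^2(\hT)} \lesssim \|\nabla v_h\|_{L^2(T)}$ (which follows from $\nabla \hv_h = DF^T (\nabla v_h)\circ F$ and $|\det DF| \gtrsim h_T^2$) yields
\[
  \|D^2 v_h\|_{L^2(e)}^2 \;\lesssim\; h_T^{-1}\|D^2 v_h\|_{L^2(T)}^2 \;+\; h_T^{-3}\|\nabla v_h\|_{L^2(T)}^2.
\]
The hard part is the second summand, which has the wrong scaling and is the only genuine obstacle. To remove it, I note that for any $p \in \mP_1(T)$, $\hat p = p\circ F \in \Vhk(\hT)$ by \eqref{E:P1-invariance}, hence the preceding derivation applies verbatim to $v_h - p$, while both sides of the target inequality are invariant under this substitution because $D^2 p \equiv 0$ on $T$. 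Choosing $p$ so that $\int_T \nabla(v_h-p) = \mathbf 0$ (three linear conditions for three parameters of $p$), Poincaré–Wirtinger on the shape-regular domain $T$ gives $\|\nabla(v_h-p)\|_{L^2(T)} \lesssim h_T \|D^2 v_h\|_{L^2(T)}$, which absorbs the offending term into the principal one. Together with $h_T \sim h_e$, this yields the first bound in \eqref{E:inverse}.

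The second estimate follows by the identical three-step recipe with one extra derivative. Differentiating \eqref{E:point-second} once more and invoking \eqref{E:iso-map} for $m=3$ produces the pointwise bound $|(D^3 v_h)\circ F| \lesssim h_T^{-3}(|D^3 \hv_h| + |D^2 \hv_h| + |\nabla \hv_h|)$. Repeating the reference-element trace/inverse estimates (noting that for $k=2$ the $D^3\hv_h$ contribution vanishes, so the estimate is unaffected) and pulling back to $T$ leads to $\|D^3 v_h\|_{L^2(e)}^2 \lesssim h_T^{-3}\|D^2 v_h\|_{L^2(T)}^2 + h_T^{-5}\|\nabla v_h\|_{L^2(T)}^2$. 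The same Bramble–Hilbert substitution $v_h \leftarrow v_h - p$ kills the lower-order contribution and produces the second inequality in \eqref{E:inverse}.
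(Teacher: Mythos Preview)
Your proof is correct and follows essentially the same route as the paper: pull back to the reference element, apply polynomial trace and inverse estimates there, transfer back via \eqref{E:point-second}--\eqref{E:relation-D2hv}, and eliminate the spurious $\|\nabla v_h\|_{L^2(T)}$ term by subtracting a suitable $p\in\mP_1(T)$ using the invariance \eqref{E:key-prop}. The only cosmetic difference is that the paper introduces $p$ at the outset and works with the full $H^2(\hT)$-norm of $\hv_h-\hp$, whereas you derive the raw estimate first and substitute $v_h\mapsto v_h-p$ afterwards; also note that your parenthetical ``for $k=2$ the $D^3\hv_h$ contribution vanishes'' is only true for $\mP_2$ and not $\mQ_2$, but this is harmless since the reference inverse estimate $\|D^3\hv_h\|_{L^2(\hT)}\lesssim\|D^2\hv_h\|_{L^2(\hT)}$ holds on any fixed polynomial space.
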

\begin{proof}
  Let $p\in\mP_1(T)$ be a linear polynomial to be chosen later. Combining \eqref{E:point-second} and \eqref{E:iso-map} with an inverse estimate for $\hv_h-\hp \in \Vhk(\hT)$, according to \eqref{E:key-prop}, yields
  \[
  \|D^2 v_h\|_{L^2(e)} = \|D^2 (v_h-p)\|_{L^2(e)} \lesssim
  h_e^{-3/2}\|\hv_h-\hp\|_{H^2(\widehat{e})} \lesssim h_e^{-3/2} \|\hv_h-\hp\|_{H^2(\hT)}.
  \]
  We now map back to $T$ using
  \eqref{E:relation-D2hv} for $D^2(\hv_h-\hp)$ to get
  \[
  \|\hv_h-\hp\|_{H^2(\hT)} \lesssim h_T \|D^2 v_h\|_{L^2(T)} + \|\nabla(v_h-p)\|_{L^2(T)}
  + h_T^{-1} \|v_h-p\|_{L^2(T)}.
  \]
  We finally choose $p$ as the best linear approximation of $v_h$ in $T$, whence
  the Bramble-Hilbert lemma yields
  $\|v_h-p\|_{L^2(T)}+ h_T\|\nabla(v_h-p)\|_{L^2(T)}\lesssim h_T^2\|D^2 v_h\|_{L^2(T)}$
  and gives the first assertion.
  The remaining assertion follows along the same lines upon
  differenciating \eqref{E:point-second} once more and using \eqref{E:iso-map} with $m=2$
  together with the inverse estimate
  $\|\hv_h-\hp\|_{H^3(\widehat{e})}\lesssim \|\hv_h-\hp\|_{H^2(\hT)}$. The proof is
  complete.
\end{proof}

\begin{Lemma}[$H^2$-stability]\label{L:H2-stab}
  Let $v\in H^2(T)$ for $T\in\mathcal{T}_h$ and $I_h v \in \mathbb{V}_h^k(T)$ be the Lagrange interpolant of $v$ of degree $k\ge1$. The following bound is then valid
  \begin{equation}\label{E:H2-stab}
   \|D^2 I_h v\|_{L^2(T)} \lesssim \|D^2 v\|_{L^2(T)}.
  \end{equation}
\end{Lemma}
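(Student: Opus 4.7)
The plan is to exploit the $\mP_1(T)$-invariance \eqref{E:P1-invariance} of $I_h$, pass to the reference element to absorb the interpolation step on a fixed domain, and then use Bramble-Hilbert to dispose of the lower order terms generated by the non-affine isoparametric map. Note that $v \in H^2(T) \hookrightarrow C^0(\overline T)$ in two dimensions, so $I_h v$ is well defined.

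First, I would let $p\in\mP_1(T)$ be the best $H^2$-approximation (in the Bramble-Hilbert sense) of $v$ on $T$, so that
\[
\|v - p\|_{L^2(T)} + h_T\|\nabla(v-p)\|_{L^2(T)} \lesssim h_T^2 \|D^2 v\|_{L^2(T)}.
\]
By \eqref{E:P1-invariance} we have $I_h p = p$, hence $D^2 I_h v = D^2 (I_h v - p) = D^2 I_h (v-p)$ and I may replace $v$ by $w := v-p$ throughout, working with a function with small low-order norms relative to $\|D^2 v\|_{L^2(T)}$.

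Second, I would map to the reference element. By \eqref{E:relation-D2v} applied to $I_h w$,
\[
\|D^2 I_h w\|_{L^2(T)} \lesssim h_T^{-1} \|\widehat{I_h w}\|_{H^2(\hT)} = h_T^{-1}\|\hI \hw\|_{H^2(\hT)},
\]
where I used the commutation $\widehat{I_h w} = \hI \hw$ that follows from the definition of the isoparametric Lagrange interpolant. Since $\hT$ is fixed and $\Vhk(\hT)$ is finite dimensional, equivalence of norms combined with the Sobolev embedding $H^2(\hT) \hookrightarrow C^0(\hT)$ (in two dimensions) gives
\[
\|\hI \hw\|_{H^2(\hT)} \lesssim \|\hw\|_{C^0(\hT)} \lesssim \|\hw\|_{H^2(\hT)},
\]
with constants depending only on $\hT$ and $k$.

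Third, I would map back using \eqref{E:relation-D2hv} (together with the analogous, easier bounds $\|\nabla \hw\|_{L^2(\hT)} \lesssim \|\nabla w\|_{L^2(T)}$ and $\|\hw\|_{L^2(\hT)} \lesssim h_T^{-1}\|w\|_{L^2(T)}$ obtained from the change of variables and shape regularity). Combining these yields
\[
\|\hw\|_{H^2(\hT)} \lesssim h_T^{-1}\|w\|_{L^2(T)} + \|\nabla w\|_{L^2(T)} + h_T \|D^2 w\|_{L^2(T)}.
\]
Inserting the Bramble-Hilbert estimates for $w = v-p$ produces
\[
\|\hw\|_{H^2(\hT)} \lesssim h_T \|D^2 v\|_{L^2(T)},
\]
and chaining the three inequalities gives $\|D^2 I_h v\|_{L^2(T)} \lesssim \|D^2 v\|_{L^2(T)}$, which is \eqref{E:H2-stab}.

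The one delicate point is the boundedness of $\hI$ from $H^2(\hT)$ into $H^2(\hT)$: since $\hI$ is defined nodally, boundedness with respect to any Sobolev norm requires a pointwise-to-norm control, which in two dimensions is supplied precisely by the Sobolev embedding $H^2(\hT) \hookrightarrow C^0(\hT)$. Apart from this routine observation, the argument is a careful bookkeeping of the powers of $h_T$ inherited from \eqref{E:iso-map}, \eqref{E:relation-D2v}, and \eqref{E:relation-D2hv}, all of which would be invalid on general non-affine $F$ without the shape-regularity bound \eqref{E:iso-map}.
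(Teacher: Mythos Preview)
Your proof is correct and follows essentially the same route as the paper's: subtract a linear polynomial using the $\mP_1$-invariance \eqref{E:P1-invariance}, push to the reference element via \eqref{E:relation-D2v}, control $\hI\hw$ through an $L^\infty$ bound supplied by the embedding $H^2(\hT)\hookrightarrow C^0(\hT)$, and pull back with \eqref{E:relation-D2hv}. The only cosmetic difference is the choice of $p$: the paper takes $p$ to interpolate $v$ at three vertices of $T$, so that $\hI_1\hw=0$ and the $L^\infty$ bound reduces directly to the $H^2$-\emph{seminorm} $|\hw|_{H^2(\hT)}$, whereas you take $p$ as the Bramble--Hilbert polynomial and carry the full $\|\hw\|_{H^2(\hT)}$ norm, controlling the extra lower-order pieces $\|\hw\|_{L^2(\hT)}$ and $\|\nabla\hw\|_{L^2(\hT)}$ separately; both choices yield the same $h_T$-balance and the argument is otherwise identical.
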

\begin{proof}
  We first note that $I_h v$ is well defined because $H^2(T)\subset C^0(T)$ (recall
  that $T$ is closed). In view of \eqref{E:P1-invariance}, we see that
  $D^2 I_h v = D^2 (I_h v - p) = D^2 I_h (v - p)$ for any $p\in\mP_1(T)$.
  We utilize \eqref{E:relation-D2v} to deduce
  \[
  \| D^2 I_h v\|_{L^2(T)} \lesssim h_T^{-1} \| \hI (\hv - \hp) \|_{H^2(\hT)}.
  \]
  We choose $p$ to equal $v$ at three vertices of $T$, and observe that $\hw = \hv - \hp$
  vanishes at the corresponding three vertices of $\hT$ and the associated
  linear Lagrange interpolant $\hI_1\hw=0$ vanishes as well.
  Concatenating an inverse estimate with the interpolation error estimate
  $\|\hw-\hI_1\hw\|_{L^\infty(\hT)} \lesssim |\hw|_{H^2(\hT)}$, valid because $H^2(T)\subset L^\infty(T)$, yields
  \[
  \| \hI (\hv - \hp) \|_{H^2(\hT)}  \lesssim \|\hI( \hv - \hp) \|_{L^\infty(\hT)}
  \le \| \hv - \hp \|_{L^\infty(\hT)}
  \lesssim \| D^2 (\hv - \hp) \|_{L^2(\hT)},
  \]
  We finally invoke \eqref{E:relation-D2hv} to infer that
  \[
  \| D^2 (\hv - \hp) \|_{L^2(\hT)} \lesssim h_T \|D^2 (v - p) \|_{L^2(T)}
  + \|\nabla (v - p) \|_{L^2(T)} \lesssim  h_T\|D^2 v \|_{L^2(T)}.
  \]
  This leads to the asserted estimate \eqref{E:H2-stab}.
\end{proof}

We point out that the usual interpolation estimate
\begin{equation}\label{E:BH-T}
  \| v - I_h v \|_{L^2(T)} + h_T \|\nabla( v - I_h v) \|_{L^2(T)}
  \lesssim h_T^2 | v |_{H^2(T)},
\end{equation}
with $H^2$-seminorm on the right-hand side is valid for isoparametric elements with
polynomial degree $k=1$.
In fact, transforming to $\hT$ and back to $T$ via
\eqref{E:relation-D2hv} gives
\[
\| v - I_h v \|_{L^2(T)} + h_T \|\nabla( v - I_h v) \|_{L^2(T)}
  \lesssim h_T^2 \| D^2 v \|_{L^2(T)} + h_T |\nabla v|_{L^2(T)}.
\]
Applying this estimate to $v-p$, with $p\in\mP_1(T)$, and recalling
\eqref{E:P1-invariance} leads to \eqref{E:BH-T}. However, this argument fails for 
$k\ge2$. We now state, and prove for completeness, a key error
estimates for quadrilaterals valid for $k\ge1$
due to Ciarlet and Raviart (see Examples 7 and 8 in \cite{CiarletRaviart}).
We also refer to (13.27) in Ern and Guermond \cite{ErnGNew}.

\begin{Lemma}[error estimate for quadrilaterals]\label{L:error-quad}
Let $T\in\mathcal{T}_h$ be so that $T=F(\hT)$ with $F\in\mathbb{Q}_1$ bilinear.
If $v\in H^{k+1}(T)$ and $I_h v\in\Vhk(T)$ is the Lagrange interpolant of $v$ with
$k\ge 1$, then for $0\le m\le k+1$ there holds
\begin{equation}\label{E:error-quad}
| v - I_h v |_{H^m(T)} \lesssim h_T^{k+1-m} |v|_{H^{k+1}(T)}.
\end{equation}
\end{Lemma}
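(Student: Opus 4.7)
The plan is to pull back to the reference element $\hat T$, exploit the identity $\widehat{I_h v}=\hat I\hat v$ (with $\hat I$ the Lagrange interpolant onto $\mathbb{Q}_k(\hat T)$), handle the reference problem by Bramble--Hilbert, and finally return to $T$ while carefully tracking how higher-order derivatives behave under the bilinear map $F\in\mathbb{Q}_1$. The two key algebraic inputs are that $F$ satisfies $\|D^i F\|_{L^\infty(\hat T)}\lesssim h_T$ for $i=1,2$ and $D^i F\equiv 0$ for $i\ge 3$ (while $\|D^i F^{-1}\|_{L^\infty(T)}\lesssim h_T^{-i}$ by \eqref{E:iso-map}), and that $I_h$ preserves the full space $\mathbb{P}_k(T)$, not just $\mathbb{P}_1(T)$.

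First, I would establish two pullback seminorm inequalities used repeatedly in the sequel. Applying Fa\`a di Bruno to $\hat w=w\circ F$ and counting partitions of an order-$m$ derivative into $j$ factors of order in $\{1,2\}$ (each factor contributing at most one power of $h_T$, with $j\ge\lceil m/2\rceil$ forced by the vanishing of $D^i F$ for $i\ge 3$), and performing the change of variables $dx\sim h_T^2 d\hat x$, yields
\[
|\hat w|_{H^m(\hat T)} \lesssim \sum_{j=\lceil m/2\rceil}^{m} h_T^{\,j-1}\,|w|_{H^j(T)},
\qquad
|w|_{H^m(T)} \lesssim h_T^{1-m}\sum_{j=1}^{m} |\hat w|_{H^j(\hat T)},
\]
the second bound using the estimate on $D^i F^{-1}$; the $m=0$ case reduces to a pure change of variables.

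Second, I would verify the polynomial-invariance $I_h p=p$ for all $p\in\mathbb{P}_k(T)$. Since each $F_i$ is bilinear, any monomial $x_1^a x_2^b$ with $a+b\le k$ satisfies $F_1^a F_2^b \in \mathbb{Q}_{a+b}\subset \mathbb{Q}_k(\hat T)$ after pullback, so $\hat p\in \mathbb{Q}_k(\hat T)$ is fixed by $\hat I$. Substituting $v-p$ for $v$ and pulling back thus gives
\[
|v-I_h v|_{H^m(T)}
=|(v-p)-I_h(v-p)|_{H^m(T)}
\lesssim h_T^{1-m}\sum_{j=1}^{m}|\widehat{v-p}-\hat I\,\widehat{v-p}|_{H^j(\hat T)}.
\]
Each reference term is handled by Bramble--Hilbert combined with $L^\infty$-stability of $\hat I$, valid since $H^{k+1}(\hat T)\hookrightarrow C^0(\hat T)$ in two dimensions for $k\ge 1$: bounding $|\hat u-\hat I\hat u|_{H^j(\hat T)}\lesssim \|\hat u\|_{H^{k+1}(\hat T)}\lesssim |\hat u|_{H^{k+1}(\hat T)}$ modulo $\mathbb{P}_k$, and applying the first pullback inequality to $\hat u=\widehat{v-p}$, produces
\[
|v-I_h v|_{H^m(T)} \lesssim \sum_{j=\lceil(k+1)/2\rceil}^{k+1} h_T^{\,j-m}\,|v-p|_{H^j(T)}
\qquad\forall\, p\in\mathbb{P}_k(T).
\]

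Finally, choosing $p\in\mathbb{P}_k(T)$ by (averaged Taylor) Bramble--Hilbert on $T$ so that $|v-p|_{H^j(T)}\lesssim h_T^{k+1-j}|v|_{H^{k+1}(T)}$ for every $0\le j\le k+1$, each power of $h_T$ in the sum collapses to $h_T^{k+1-m}$, yielding the asserted estimate. The main obstacle, which the polynomial-invariance step is precisely designed to remove, is the stubborn appearance of lower-order seminorms $|v|_{H^j(T)}$ with $j<k+1$ and suboptimal powers of $h_T$ in the naive pullback bound for $|\hat v|_{H^{k+1}(\hat T)}$; without exploiting that a genuinely bilinear $F$ still produces $\hat p\in\mathbb{Q}_k(\hat T)$ from every $p\in\mathbb{P}_k(T)$, those terms cannot be absorbed, and this algebraic observation is the crux of the $\mathbb{Q}_1$-isoparametric case.
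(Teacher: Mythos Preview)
Your proof is correct but takes a genuinely different route from the paper. The paper's argument maps to $\hT$ and invokes the $\mathbb{Q}_k$-specific Bramble--Hilbert estimate (Theorem~1 in \cite{Bramble}), which bounds the reference interpolation error by the \emph{pure} derivatives $[D^{k+1}\hv]=(\partial_1^{k+1}\hv,\partial_2^{k+1}\hv)$ alone. The algebraic input is then that for bilinear $F$ the \emph{pure} second derivatives $\partial_i^2 F$ vanish (even though $\partial_{12}F\ne 0$), so the chain rule for $\partial_i^{k+1}\hv$ involves only products of $k{+}1$ first derivatives of $F$, yielding $\|[D^{k+1}\hv]\|_{L^2(\hT)}\lesssim h_T^k |v|_{H^{k+1}(T)}$ directly---no auxiliary polynomial on $T$ is needed. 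Your approach replaces this $\mathbb{Q}_k$-specific lemma by the ordinary $\mathbb{P}_k$ Bramble--Hilbert, applied twice (once on $\hT$, once on $T$); the price is the extra polynomial $p\in\mathbb{P}_k(T)$, and the key algebraic input becomes instead the (stronger than \eqref{E:P1-invariance}) invariance $I_h p=p$ for all $p\in\mathbb{P}_k(T)$, which you correctly derive from $F_1^aF_2^b\in\mathbb{Q}_{a+b}$. Your route is thus more elementary---it avoids the tensor-product Bramble--Hilbert result---at the cost of being slightly longer. As a side remark, your lower summation limit $j\ge\lceil(k{+}1)/2\rceil$ coming from $D^iF=0$ for $i\ge3$ is correct but not actually needed: once you substitute $|v-p|_{H^j(T)}\lesssim h_T^{k+1-j}|v|_{H^{k+1}(T)}$, every term collapses to $h_T^{k+1-m}$ regardless of $j$, so summing from $j=1$ would give the same bound.
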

\begin{proof}
Expression \eqref{E:point-second} in conjunction with \eqref{E:iso-map}
reveals that mapping $D^m(v - I_h v)$ from $T$ to
$\hT$ involves computing all derivatives $D^j(\hv-\hI\hv)$ according to
\begin{equation*}
\|D^m(v - I_h v)\|_{L^2(T)} \lesssim h_T^{1-m} \sum_{j=1}^m \|D^j(\hv-\hI\hv)\|_{L^2(\hT)}
\lesssim h_T^{1-m} \|[ D^{k+1} \hv] \|_{L^2(\hT)},
\end{equation*}
where $[ D^{k+1} \hv] = (\partial_i^{k+1} \hv)_{i=1}^2$ stands for all {\it pure} derivatives
of order $k+1$. The latter inequality is a consequence of the Bramble-Hilbert estimate
for $\mathbb{Q}_k$ elements; see Theorem 1 in \cite{Bramble}. We now resort to a variant
of \eqref{E:point-second-inv} involving $k+1$ derivatives but simplified by the fact
that pure derivatives $\partial_i^j F = 0$ if $j\ge2$ because $F$ is bilinear:
\begin{equation*}
  \partial_i^{k+1} \hv(\hx) = \sum_{n_1,\cdots,n_{k+1}=1}^2 \rhn{\partial^{k+1}_{n_1,\cdots,n_{k+1}} v(x)}
  \, \partial_i F_{n_1}(x) \cdots \partial_i F_{n_{k+1}}(x).
\end{equation*}
Since $\|DF\|_{L^\infty(\hT)} \lesssim h_T$, this yields
\begin{equation}\label{E:Dk+1-quads}
\| [D^{k+1} \hv] \|_{L^2(\hT)} \lesssim h_T^k |v|_{H^{k+1}(T)}
\end{equation}
and combined with the previous estimate gives the asserted estimate \eqref{E:error-quad}.
\end{proof}

Lemma \ref{L:error-quad} extends to isoparametric maps $F\in\mathbb{Q}_k$ for $k\ge1$.
We quote here Theorem 6 of Ciarlet and Raviart \cite{CiarletRaviart}; see also
(13.30) in Ern and Guermond \cite{ErnGNew}.

\begin{Lemma}[error estimates for curved quadrilaterals]\label{L:error-curved-quad}
Let $T\in\mathcal{T}_h$ be so that $T=F(\hT)$ with $F\in\mathbb{Q}_k$ and let
$\widetilde{F}\in\mathbb{Q}_1$ be the bilinear function that maps the vertices of
$\hT$ to those of $T$. Let $F$ satisfy
\begin{equation}\label{E:F-tF}
  \| [D^m F] \|_{L^\infty(\hT)} \lesssim \| D\widetilde{F} \|_{L^\infty(\hT)}^m
  \quad\forall \, 2\le m\le k+1.
\end{equation}
If $v\in H^{k+1}(T)$ and $I_h v\in\Vhk(T)$ is the Lagrange interpolant of $v$ with
$k\ge 1$, then for $0\le m\le k+1$ there holds
\begin{equation}\label{E:error-quad-curved}
| v - I_h v |_{H^m(T)} \lesssim h_T^{k+1-m} \|v\|_{H^{k+1}(T)}.
\end{equation}
\end{Lemma}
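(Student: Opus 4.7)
The plan is to mimic the argument of Lemma \ref{L:error-quad} (error estimate for quadrilaterals), but replace the shortcut ``$\partial_i^j F = 0$ for $j\ge 2$'', which is no longer available, by the structural hypothesis \eqref{E:F-tF}. As before, I would start on the physical element: by the chain rule \eqref{E:point-second} iterated $m$ times, together with \eqref{E:iso-map} and a change of variables that produces the Jacobian $\|\det DF\|_{L^\infty(\hT)} \lesssim h_T^2$, I obtain
\[
\|D^m(v - I_h v)\|_{L^2(T)} \lesssim h_T^{1-m} \sum_{j=1}^m \|D^j(\hv - \hI\hv)\|_{L^2(\hT)}.
\]
On $\hT$, the Lagrange operator $\hI$ leaves $\mathbb{Q}_k$ invariant, so the Bramble-Hilbert lemma for $\mathbb{Q}_k$ (as in \cite{Bramble}) bounds the right-hand side by $\|[D^{k+1}\hv]\|_{L^2(\hT)}$, the $L^2$-norm of the pure $(k+1)$-st derivatives of $\hv$.

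The crux is then to estimate $\|[D^{k+1}\hv]\|_{L^2(\hT)}$ in terms of norms of $v$ on $T$. This is where \eqref{E:F-tF} enters: iterating \eqref{E:point-second-inv} and using Fa\`a di Bruno's formula, a pure derivative $\partial_i^{k+1}\hv(\hx)$ is a sum
\[
\partial_i^{k+1} \hv(\hx) = \sum_{\alpha} C_\alpha \, D^{|\alpha|}v(F(\hx)) \prod_{\ell} \partial_i^{\beta_\ell} F_{n_\ell}(\hx),
\]
where the inner multi-indices satisfy $\sum_\ell \beta_\ell = k+1$ and $1 \le |\alpha| \le k+1$. Hypothesis \eqref{E:F-tF}, combined with shape regularity $\|D\widetilde{F}\|_{L^\infty(\hT)} \lesssim h_T$, yields $\|\partial_i^{\beta_\ell} F_{n_\ell}\|_{L^\infty(\hT)} \lesssim h_T^{\beta_\ell}$, so every product in the sum is controlled by $h_T^{k+1}$. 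After squaring, integrating over $\hT$, and changing variables back to $T$ (which supplies $h_T^{-2}$), I arrive at
\[
\|[D^{k+1}\hv]\|_{L^2(\hT)} \lesssim h_T^{k} \sum_{j=1}^{k+1} |v|_{H^j(T)} \lesssim h_T^{k} \|v\|_{H^{k+1}(T)}.
\]
Combining this with the previous display gives the asserted bound \eqref{E:error-quad-curved}.

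The main obstacle is the bookkeeping of the Fa\`a di Bruno sum: unlike the bilinear case, the higher-order derivatives of $F$ do not vanish, so lower-order derivatives of $v$ necessarily appear, which is exactly why the right-hand side of \eqref{E:error-quad-curved} involves the full norm $\|v\|_{H^{k+1}(T)}$ rather than the seminorm $|v|_{H^{k+1}(T)}$ of Lemma \ref{L:error-quad}. Hypothesis \eqref{E:F-tF} is precisely the structural assumption needed so that, despite this combinatorial mixing, every contribution carries the correct power $h_T^{k+1-m}$ after all scalings are reassembled.
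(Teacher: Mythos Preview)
Your proposal is correct and follows essentially the same approach as the paper. The paper does not give a full proof of this lemma but merely remarks that it proceeds as in Lemma~\ref{L:error-quad} with \eqref{E:Dk+1-quads} replaced by $\|[D^{k+1}\hv]\|_{L^2(\hT)} \lesssim h_T^k \|v\|_{H^{k+1}(T)}$, the difference arising because $\partial_i^j F \ne 0$ for $j\ge 2$ and hypothesis \eqref{E:F-tF} is used instead; your Fa\`a di Bruno argument is precisely the detailed verification of this replacement, and your observation about the full norm versus the seminorm is exactly on point.
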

}

\rhn{
The proof of Lemma \ref{L:error-curved-quad} is similar to that of Lemma
\ref{L:error-quad} except that 
\begin{equation}\label{E:Dk+1-iso}
\| [D^{k+1} \hv] \|_{L^2(\hT)} \lesssim h_T^k \|v\|_{H^{k+1}(T)}
\end{equation}
replaces \eqref{E:Dk+1-quads}.
This is because $\partial_i^j F \ne 0$ for $j\ge2$ and \eqref{E:F-tF} is
used instead.
}

 \bibliographystyle{amsplain}

\begin{thebibliography}{99}
 
 \bibitem{dealii85}
 {\sc D. Arndt, W. Bangerth, D. Davydov, T. Heister, L. Heltai, M. Kronbichler, M. Maier, J.-P. Pelteret, B. Turcksin, D. Wells},
 {\em The \texttt{deal.II} Library, Version 8.5},
 Journal of Numerical Mathematics, 25(3):137--146, 2017.
 
 \bibitem{ABCM}
 {\sc D. N. Arnold, F. Brezzi, B. Cockburn, L. D. Marini},
 {\em Unified analysis of discontinuous Galerkin methods for elliptic problems},
 SIAM J. Numer. Anal., 39(5), 1749--1779, 2002.

\bibitem{BMN:02}
{\sc E. B\"ansch, P. Morin, R.H. Nochetto},
 {\it An adaptive Uzawa FEM for the Stokes problem: Convergence without the
inf-sup condition}, SIAM J. Numer. Anal. 40, 1207--1229, 2002.
 
 \bibitem{dealiipaper}
 {\sc W. Bangerth, R. Hartmann, G. Kanschat},
 {\em {deal.II} -- a general purpose object oriented finite element library}, ACM Trans. Math. Softw., 33(4):24/1--24/27, 2017.
 
 \bibitem{Bartels}
 {\sc S. Bartels}, 
 {\em Finite element approximation of large bending isometries},
  Numer. Math. 124, 3, 415-440, 2013.
 
 
 \bibitem{BartelsBook}
 {\sc S. Bartels},
 {\em Numerical Methods for Nonlinear Partial Differential Equations},
 Springer International Publishing, 2015.
 
 
 \bibitem{BaBoMuNo}
 {\sc S. Bartels, A. Bonito, Anastasia H. Muliana, R. H. Nochetto},
 {\em Modeling and simulation of thermally actuated bilayer plates},
 J. Comp. Phys. 354, 512--528, 2018.
 
 \bibitem{BaBoNo}
 {\sc S. Bartels, A. Bonito, R. H. Nochetto},
 {\em Bilayer plates: Model reduction, $\Gamma$-convergent finite element approximation and discrete gradient flow}, Commun. Pure Appl. Math.  70, 3, 547-–589, 2017.
 
 \bibitem{BasAbLaGr}
{\sc N. Bassik, B. Abebe, K. Laflin, D. Gracias}, {\em Photolithographically patterned smart hydrogel based
	bilayer actuators}, Polymer 51, 6093-6098, 2010.


 \bibitem{Lewicka}
 {\sc K. Bhattacharya, M. Lewicka, M. Schaffner},
 {\em Plates with incompatible prestrain}, Arch. Rational Mech. Anal. 221, 143-–181, 2016.


 \bibitem{BoNo}
 {\sc A. Bonito, R. H. Nochetto},
 {\em Quasi-optimal convergence rate of an adaptive discontinuous Galerkin method},
 SIAM J. Numer. Anal., 48(2), 734-771, 2010. 
 
 
 \bibitem{Bramble}
 {\sc J.H. Bramble, S.R. Hilbert},
 {\em Bounds for a class of linear functionals with applications to
   {H}ermite interpolation},
 Numer. Math., 16, 362--369, 1970/71. 
 
 
 \bibitem{Brenner1}
 {\sc S. C. Brenner},{\em Poincar\'e-Friedrichs inequalities for piecewise
  $H^1$ functions},
 SIAM J. Numer. Anal., 41(1), 306-324, 2003.
 
 \bibitem{BrennerPrec}
 {\sc S. C. Brenner},{\em Two-level additive {S}chwarz preconditioners for
 nonconforming finite elements},
 Math. Comp., 65(215), 897-921, 1996.
 
 
 \bibitem{Brenner}
 {\sc S. C. Brenner, K. Wang, J. Zhao},{\em Poincar\'e-Friedrichs inequalities
 for piecewise $H^2$ functions},
 Numer. Funct. Anal. Optim., 25, 463-478, 2004.

 \bibitem{Brezzi}
 {\sc F. Brezzi, G. Manzini, D. Marini, P. Pietra, A. Russo},
 {\em Discontinuous {G}alerkin approximations for elliptic problems},
 Numer. Methods Partial Differential Equations, 16(4), 365-378, 2000.
 
 \bibitem{Buffa}
 {\sc A. Buffa, C. Ortner},{\em Compact embeddings of broken Sobolev spaces
 and applications},
 IMA Numer. Anal., 29, 827-855, 2009.

 \bibitem{Cockburn}
 \rhn{{\sc P. Castillo, B. Cockburn, I. Perugia, D. Sch\"{o}tzau},{\em An a priori error analysis of the local discontinuous
              {G}alerkin method for elliptic problems},
 SIAM J. Numer. Anal., 38, 1676--1706, 2000.}
	

 \bibitem{CiarletRaviart}
 {\sc P.G. Ciarlet, P.-A. Raviart},{\em Interpolation theory over curved elements,
 with applications to finite element methods},
 Comput. Methods Appl. Mech. Engrg., 1, 217--249, 1972.

 \bibitem{DiPietroErn}
 {\sc D. A. Di Pietro, A. Ern}, {\em Discrete functional fnalysis tools for 
 	discontinuous Galerkin methods with application to the 
 	incompressible Navier-Stokes equations}, 
 Math. Comp.,  79(271), 1303-1330, 2010.

 \bibitem{ErnG}
 {\sc A. Ern, J-L Guermond}, {\em Finite element quasi-interpolation and best
 approximation},
 ESAIM: M2AN 51, 1367-1385, 2017.
 
 \bibitem{ErnGNew}
 {\sc A. Ern, J-L Guermond}, {\em Finite Elements I: Approximation and interpolation},
 to appear.
 
 \bibitem{FJM}
 {\sc G. Friesecke, R.D. James, S. M\"uller},
 {\em A theorem on geometric rigidity and the derivation of nonlinear
plate theory from three-dimensional elasticity},
 Comm. Pure Appl. Math. 55, 11, 1461-1506, 2002.
 
 \bibitem{JaSmIn}
 {\sc E. Jager, E. Smela, O. Ingan\"as},
 {\em Microfabricating conjugated polymer actuators}, 
 Science 290, 1540-1545, 2000.
 
 \bibitem{Hoppe}
 {\sc R.H.W. Hoppe, B. Wohlmuth},
 {\em Element-oriented and edge-oriented local error estimators for nonconforming finite element methods}, 
 RAIRO Mod\'el. Math. Anal. Num\'er., 30(2), 237-263, 1996.
 
 \bibitem{Hornung}
 {\sc P. Hornung},
 {\em Approximation of flat $W^{2,2}$ isometric immersions by smooth ones}, 
 Arch. Ration. Mech. Anal. 199, 1015- 1067, 2011.
 
 \bibitem{KuLPL}
 {\sc J.-N. Kuo, G.-B. Lee, W.-F. Pan, H.-L. Lee},
 {\em Shape and thermal effects of metal films on stress-induced
 	bending of micromachined bilayer cantilever},
 Japanese Journal of Applied Physics 44, 5R, 3180, 2005.
 
 \bibitem{Lewicka2}
 {\sc M. Lewicka, P. Ochoa, M.-R. Pakzad},
 {\em Variational models for prestrained plates with Monge-Ampère constraint},
  Differential Integral Equations 28, no. 9/10, 861--898, 2015.
 
 \bibitem{Ntogkas}
 {\sc D. Ntogkas}, {\em Non-linear geometric PDEs: algorithms, numerical analysis and computation}, PhD Thesis, University of Maryland, College Park, 2018.

 \bibitem{Oswald}
 {\sc P. Oswald}, {\em On a {BPX}-preconditioner for {${\rm P}1$} elements}, Computing, 51(2), 125-133, 1993. 
 
 \bibitem{Pryer}
 {\sc T. Pryer}, {\em Discontinuous Galerkin methods for the $p-$biharmonic equation from a discrete variational perspective}, Electronic Transactions of Numerical Analysis, 2014. 
 
 \bibitem{Riv}
 {\sc B. Rivi\`ere},
 {\em Discontinuous Galerkin Methods for Solving Elliptic and Parabolic Equations:
 Theory and Implementation}, Society for Industrial and Applied Mathematics, 2008.
 
 \bibitem{SchmidtEb}
 {\sc O. Schmidt, K. Eberl},
 {\em Thin solid films roll up into nanotubes},
 Nature 410, 168, 2001.
 
 \bibitem{SmInLu}
 {\sc E. Smela, O. Ingan\"os, I. Lundstr\"om}, 
 {\em Controlled folding of micrometer-size structures}, Science 268,
 5218, 1735–1738, 1995.
 
\end{thebibliography}

\end{document}